\newtheorem{thm}{Theorem}[section]
\newtheorem{lem}[thm]{Lemma}
\theoremstyle{definition}
\theoremstyle{remark}
\def\sgn{{\rm sgn}}
\numberwithin{equation}{section}
\makeatletter \@addtoreset{equation}{section} \makeatother
\newdimen\plusheight
\def\+{\;\lower\plusheight\hbox{$+$}\;}
\newdimen\plusheight
\def\dos{\;\lower\plusheight\hbox{$\cdots$}\;}
\title{\bf  Finding
Modular Functions for Ramanujan-Type Identities}
\author{William Y.C. Chen$^1$, Julia Q.D. Du$^2$
and Jack C.D. Zhao$^3$
\date{$^{1,2}$Center for Applied Mathematics\\
Tianjin University\\
Tianjin 300072, P. R. China\\[12pt]
$^3$Center for Combinatorics\\
Nankai University\\
Tianjin 300071, P. R. China
\vskip 0.2 cm
Emails: $^1$chenyc@tju.edu.cn,
$^2$qddu@tju.edu.cn,
$^3$cdzhao@mail.nankai.edu.cn
\vskip 0.5 cm
\emph{{Dedicated to Professor George E. Andrews on the occasion of
his 80th birthday}}}
}
\begin{document}

\maketitle

\noindent{\bf Abstract.} This paper is concerned with a class of partition functions $a(n)$ introduced by Radu and defined in terms of eta-quotients.
By utilizing the transformation laws of Newman, Schoeneberg and Robins,
and  Radu's algorithms, we present an algorithm to find Ramanujan-type identities for $a(mn+t)$.
While this algorithm  is not guaranteed to succeed,
it applies to many cases.
For example,
we deduce a witness identity for $p(11n+6)$ with integer coefficients.
Our algorithm also leads to
Ramanujan-type identities for the overpartition functions $\overline{p}(5n+2)$ and $\overline{p}(5n+3)$ and Andrews--Paule's broken $2$-diamond partition
functions $\triangle_{2}(25n+14)$ and $\triangle_{2}(25n+24)$.
It can also be extended to derive
Ramanujan-type identities on a more general class of
partition functions. For example,
it yields the Ramanujan-type identities on Andrews'
singular overpartition functions
$\overline{Q}_{3,1}(9n+3)$ and $ \overline{Q}_{3,1}(9n+6)$ due to Shen,
the $2$-dissection formulas of Ramanujan  and the $8$-dissection formulas due to Hirschhorn.

\noindent{\bf AMS Classification.} {05A15, 11P83, 11P84, 05A17.}

\noindent{\bf Keywords.} {Ramanujan-type identities, Modular functions, Generalized eta-functions,
Partition functions.}

\section{Introduction}

Throughout this paper, we follow the standard $q$-series notation in \cite{Gasper-Rahman-2004}:
\begin{align*}
 (a; q)_\infty=\prod\limits_{n=0}^\infty (1-aq^n)\quad
\text{and}
\quad (a_1,a_2, \ldots, a_m;q)_\infty=\prod\limits_{j=1}^m(a_j;q)_\infty,
\end{align*}
where $|q|<1$.
In the study of congruence properties and identities on partition functions,
Radu \cite{Radu-Thesis, Radu-2009, Radu-2015} defined a class of
partition functions $a(n)$ by
\begin{align}\label{radu-gf}
\sum_{n=0}^{\infty}
a(n)q^n=\prod_{\delta | M}
(q^\delta;q^\delta)^{r_\delta}_\infty,
\end{align}
where $M$ is a positive integer and ${r_\delta}$ are integers.
Many partition functions fall into the
framework of the above definition of $a(n)$, such as
the partition function $p(n)$,
the overpartition function $\overline{p}(n)$ \cite{Corteel-Lovejoy}, the Ramanujan $\tau$-function \cite{Ramanujan-1916,Hardy-1938,Hardy-1927},
the $k$-colored partition functions,
the $t$-core partition functions,
the $2$-colored Frobenius partition functions and the broken $k$-diamond partition
functions $\Delta_{k}(n)$ \cite{Andrews-Paule-2007}.

In this paper, we aim to present an algorithm to compute the generating function
\begin{align}\label{def-or}
\sum_{n=0}^\infty a(mn+t)q^n,
\end{align}
for fixed $m>0$ and $0\le t\le m-1$
by finding suitable  modular functions for $\Gamma_1(N)$.
When $M=1$ and $r_1=-1$,
$a(n)$ specializes to the partition function $p(n)$.
Kolberg \cite{Kolberg-1957} proved that for a positive integer $m$
prime to $6$, and $0\leq t\leq m-1$,
\begin{align}\label{Kolberg-relation}
\sum_{n=0}^\infty p(mn+t)q^{mn+t}  = (-1)^{(m-1)t} \frac{(q^{m^2}; q^{m^2})_\infty}{(q^m; q^m)^{m+1}_\infty} \det {M_t},
\end{align}
where $M_t=(g_{-t-i+j})_{(m-1)\times (m-1)}$,
\begin{align*}
g_t = \sum_{\frac{1}{2}n(3n+1) \equiv t(\text{mod }m)} (-1)^n q^{\frac{1}{2}n(3n+1)},
\end{align*}
and $g_t = g_s$ when $t\equiv s \pmod{m}$.
In view of \eqref{Kolberg-relation},
he derived some identities on $p(n)$, for example,
\begin{align}\label{GF-P-5n}
\sum_{n=0}^\infty p(5n)q^n=
\frac{(q^5; q^5)_\infty}{(q; q)_\infty^2(q, q^4; q^5)_\infty^8}
-3q\frac{(q^5; q^5)_\infty^6(q, q^4; q^5)_\infty^2}{(q; q)_\infty^7},
\end{align}
and
\begin{align*}
\left(\sum\limits_{n=0}^\infty p(5n)q^n\right)\left(\sum\limits_{n=0}^\infty p(5n+3)q^n\right)=3\frac{(q^5; q^5)_\infty^4}{(q; q)_\infty^6}+25q \frac{(q^5; q^5)_\infty^{10}}{(q; q)_\infty^{12}}.
\end{align*}
Atkin and Swinnerton-Dyer \cite{Atkin-Swinnerton-Dyer-1954}
have shown that $g_t$ can always
be expressed by certain infinite products for $m>3$.
Then the left hand side of \eqref{Kolberg-relation}
can be expressed in terms of certain infinite products.
Kolberg pointed out that when $m>5$, this becomes much more complicated.
For $m=11, 13$,
Bilgici and Ekin \cite{Bilgici-Ekin-2014-13, Bilgici-Ekin-2014}
used the method of Kolberg
to compute the generating function
\[\sum_{n=0}^\infty p(mn+t)q^{mn+t}\]
for all $0\leq t\leq m-1$.

Based on the ideas of Rademacher \cite{Rademacher-1942},
Newman \cite{Newman-1957, Newman-1959}
and  Kolberg \cite{Kolberg-1957},
Radu \cite{Radu-2009} developed an algorithm to verify the congruences
\begin{align}\label{a-m-t-cong}
a(mn+t)\equiv 0\pmod u,
\end{align}
for any given $m$, $t$ and $u$, and for all $n\geq 0$,
where $a(n)$ is defined in \eqref{radu-gf}.
Moreover, Radu \cite{Radu-2015} developed an algorithm,  called the Ramanujan--Kolberg algorithm, to derive
identities on the generating functions of $a(mn+t)$ using modular functions for $\Gamma_0(N)$.
A description of the Ramanujan--Kolberg algorithm can  be found in Paule and Radu \cite{Peter-Radu-2017-2}.
Smoot \cite{Smoot-2019} developed a Mathematica package \texttt{RaduRK} to implement Radu's algorithm.
It should be mentioned that
Eichhorn \cite{Eichhorn-1999} extended the technique
in \cite{Eichhorn-Ono-1995, Eichhorn-Sellers-2002} to partition functions $a(n)$ defined by
\begin{align}\label{Eichhorn-def}
\sum_{n=0}^\infty a(n)q^n = \prod_{j=1}^{L} (q^j; q^j)_\infty^{e_j},
\end{align}
where $L$ is a positive integer and $e_j$ are integers,
and reduced the verification of the congruences \eqref{a-m-t-cong}
to a finite number of cases.
It is easy to see that the defining relations
\eqref{radu-gf} and \eqref{Eichhorn-def} are equivalent to each other.
In this paper, we shall adopt the form of \eqref{radu-gf} in accordance with
the notation of eta-quotients.

Recall that the Dedekind eta-function $\eta(\tau)$ is defined by
\[\eta(\tau)=q^{\frac{1}{24}}\prod_{n=1}^\infty(1-q^n),\]
where $q=e^{2\pi i \tau}$, $\tau\in \mathbb{H}=\{\tau\in\mathbb{C}\colon{\rm Im}(\tau)>0\}$. An eta-quotient is a function of the form
\begin{align*}
\prod_{\delta|M}\eta^{r_{\delta}}(\delta \tau),
\end{align*}
where $M\geq 1$ and each $r_{\delta}$ is an integer.

The Ramanujan--Kolberg algorithm leads to verifications of some identities
on $p(n)$ due to Ramanujan \cite{Ramanujan-1919}, Zuckerman \cite{Zuckerman-1939}
and Kolberg \cite{Kolberg-1957}, for example,
\begin{align*}
\sum_{n=0}^\infty p(5n+4)q^n = 5\frac{(q^5; q^5)^5_\infty}{(q; q)^6_\infty},
\end{align*}
see \cite[eq. (18)]{Ramanujan-1919}.
It should be noted that there are
some Ramanujan-type identities that are not
covered by the Ramanujan--Kolberg algorithm,
such as the identity \eqref{GF-P-5n}.

In this paper, we  develop an algorithm to derive
Ramanujan-type identities for $a(mn+t)$ for $m>0$ and $0\leq t \leq m-1$,
which is essentially a modified version of Radu's algorithm.
We first find a necessary and sufficient condition for
a product of a generalized eta-quotient and the generating function
\eqref{def-or} to
be a modular function for $\Gamma_1(N)$
up to a power of $q$.
Then we try to express this modular function as a linear combination
of generalized eta-quotients over $\mathbb{Q}$.

For example, our algorithm leads to a verification  of
\eqref{GF-P-5n} for $p(5n)$. Moreover, we obtain Ramanujan-type identities for the overpartition functions $\overline{p}(5n+2)$ and $\overline{p}(5n+3)$ and the broken $2$-diamond partition
functions $\Delta_{2}(25n+14)$ and $\Delta_{2}(25n+24)$.
We also obtain the following witness identity with integer coefficients for $p(11n+6)$.

\begin{thm}\label{thm-p(11n+6)}
We have
\begin{align}
&z_0
\sum\limits_{n=0}^\infty p(11n+6)q^n \nonumber\\[6pt]
&\quad=
11  z^{10}+121   z^{8}e+330   z^{9}-484   z^7e-990   z^{8}+484   z^6 e+792   z^7\nonumber\\[6pt]
&\quad\qquad -484   z^5e+44   z^6+1089   z^4 e-132   z^5-1452   z^3e-451   z^4\nonumber\\[6pt]
&\qquad\quad +968   z^2e+748   z^3-242   ze-429   z^2+77   z+11,\label{ge-p-11-q}
\end{align}
where
\begin{align}
z_0 &= \frac{(q;q)_{\infty}^{24}}{q^{20} (q^{11};q^{11})_{\infty}^{23}
(q,q^{10};q^{11})_{\infty}^{28}
(q^2,q^9;q^{11})_{\infty}^{16}(q^3,q^{8};q^{11})_{\infty}^{12}
(q^4,q^{7};q^{11})_{\infty}^4},\nonumber\\[6pt]
z&=\frac{(q;q)_{\infty}}{q^{2}(q^{11};q^{11})_{\infty}(q,q^{10};q^{11})_{\infty}^{3}(q^2,q^9;q^{11})_{\infty}^{2}},
\label{z}\\[6pt] e&=\frac{(q;q)_{\infty}^{3}}{q^{3}(q^{11};q^{11})_{\infty}^{3}(q,q^{10};q^{11})_{\infty}^{5}(q^2,q^9;q^{11})_{\infty}^{5}(q^3,q^{8};q^{11})_{\infty}^{4}
(q^4,q^{7};q^{11})_{\infty}}.\label{e}
\end{align}
\end{thm}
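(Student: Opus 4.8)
The plan is to realize the claimed identity \eqref{ge-p-11-q} as an equality of modular functions for $\Gamma_1(11)$ and then reduce it to a finite check. First I would apply the algorithm described in the paper to the eta-quotient generating function for $p(n)$ with $M=1$, $r_1=-1$, and the arithmetic progression $m=11$, $t=6$. The generating function $\sum_{n\ge 0} p(11n+6)q^n$ is, up to a rational power of $q$, the $U_{11}$-image of $1/(q;q)_\infty$; multiplying by a suitable generalized eta-quotient — this is the role of $z_0$, whose exponents on the generalized eta-functions $(q^a,q^{11-a};q^{11})_\infty$ are exactly tuned to clear denominators — should produce a holomorphic modular function on $\Gamma_1(11)$ whose only pole is at the cusp $\infty$. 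The key structural input is the necessary-and-sufficient condition (proved earlier in the paper, via the transformation laws of Newman, Schoeneberg and Robins) guaranteeing that $z_0\sum p(11n+6)q^n$, $z$, and $e$ are all modular functions for $\Gamma_1(11)$ with controlled orders at every cusp.

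Next I would show that the function field involved is generated in a way that makes the right-hand side of \eqref{ge-p-11-q} a legitimate element of it. Concretely, $z$ in \eqref{z} and $e$ in \eqref{e} are generalized eta-quotients that are modular functions for $\Gamma_1(11)$; $z$ has a pole only at $\infty$ and $e$ has poles only at a small set of cusps, so polynomials in $z$ with coefficients that are at most linear in $e$ span a finite-dimensional space of functions whose pole is concentrated at $\infty$ with bounded order. The left-hand side $z_0\sum p(11n+6)q^n$ lies in this same space once one checks — using the order formulas at the cusps from the transformation laws — that its order at $\infty$ is at least $-10$ (matching the $z^{10}$ term) and that it is holomorphic at every other cusp. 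Granting that both sides lie in a common finite-dimensional space of modular functions holomorphic away from $\infty$ with pole order bounded by some explicit $B$, the identity follows from the valence formula: two such functions agreeing in their $q$-expansion up to order $q^{B}$ must be equal.

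The computational heart of the proof is therefore twofold. One must (i) produce the candidate right-hand side, i.e. determine the $21$ rational coefficients multiplying $z^{10}, z^8 e, z^9, \ldots, 1$; this is a finite linear-algebra problem — expand $z_0\sum p(11n+6)q^n$ and each monomial $z^j$, $z^j e$ as $q$-series to sufficiently high order and solve the resulting linear system over $\mathbb{Q}$. And (ii) one must certify the solution by the valence/Sturm-type bound: compute the explicit pole order $B$ at $\infty$ of the difference of the two sides (using that every generalized eta-quotient here is holomorphic on $\mathbb{H}$, with cusp orders read off from Newman--Schoeneberg--Robins), and verify the $q$-expansions of both sides coincide through $q^{B}$. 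Once both series match to that order, the difference is a modular function holomorphic everywhere including all cusps, hence constant, and the constant is $0$ by comparing constant terms.

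The main obstacle I anticipate is step (ii): getting a \emph{sharp enough} bound $B$, which requires carefully tabulating the order of $z_0$, $z$, and $e$ at each of the cusps of $\Gamma_1(11)$ (there are several, of two orbit types) and checking that the linear combination on the right of \eqref{ge-p-11-q} does not introduce spurious poles at cusps other than $\infty$ — the cross term $z^8 e$ in particular must be shown to have its $e$-induced poles cancelled by the structure of the combination, or else absorbed into the allowed pole at $\infty$. A secondary nuisance is bookkeeping the fractional $q$-powers: $z_0$, $z$, $e$ each carry an explicit power of $q$ in the denominator ($q^{20}$, $q^{2}$, $q^{3}$ respectively) precisely so that the products are genuine modular functions (weight $0$, trivial multiplier) rather than modular forms with character, and one must confirm these powers make every term in \eqref{ge-p-11-q} have the same integral $q$-order so that the identity is a clean power-series identity.
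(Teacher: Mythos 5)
Your proposal follows essentially the same route as the paper: use the transformation-law criterion (Theorem~\ref{con_F_modular_function}) to show that $z_0\sum_{n\ge 0} p(11n+6)q^n$ is a modular function for $\Gamma_1(11)$ whose only pole is at $\infty$, observe that $z$ and $e$ generate the space of such functions (in the paper, $\langle GE^\infty(11)\rangle_{\mathbb{Q}}=\langle 1,e\rangle_{\mathbb{Q}[z]}$ via Radu's Algorithm AB), bound the cusp orders by Theorem~\ref{order_F-1}, and certify the explicit polynomial identity by a finite $q$-expansion comparison resting on the fact that a modular function holomorphic at every cusp is constant --- which is precisely what Radu's Algorithms MC and MW, invoked in the paper's Steps 4--5, implement. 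Two details to correct in your sketch: since $\mathrm{ord}_\infty(z)=-2$ and $\mathrm{ord}_\infty(e)=-3$, the left-hand side has a pole of order $20$ (not $10$) at $\infty$, matching the $11z^{10}=11q^{-20}+\cdots$ term; and $e$, being a generator of $GE^\infty(11)$, already has its only pole at $\infty$, so the worry about cancelling ``$e$-induced poles'' of the cross term $z^8e$ at other cusps does not arise.
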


Bilgici and Ekin \cite{Bilgici-Ekin-2014} deduced
a witness identity for $p(11n+6)$ with integer coefficients
using the method of Kolberg.
Radu \cite{Radu-2015} obtained a witness identity for
$p(11n+6)$ by using the Ramanujan--Kolberg algorithm.
Hemmecke \cite{Hemmecke-2018} generalized Radu's algorithm and derived a witness identity for  $p(11n+6)$.
Paule and Radu \cite{Paule-Radu-2017-witness}
found a polynomial relation on the generating function of $p(11n+6)$,
which can also be viewed as a witness identity.
Moreover, Paule and Radu \cite{Paule-Radu-2019-congruence}
found a witness identity for
$p(11n+6)$ in terms of  eta-quotients and the $U_2$-operator acting on
eta-quotients.

Our algorithm can be extended to a
more general class of partition functions $b(n)$ defined by
\begin{align}\label{def}
\sum_{n=0}^{\infty}
b(n)q^n=\prod_{\delta | M}
(q^\delta;q^\delta)^{r_\delta}_\infty
\prod_{\delta|M\atop 0<g<\delta}
(q^g,q^{\delta-g};q^\delta)^{r_{\delta,g}}_\infty,
\end{align}
where $M$ is a positive integer and
$r_\delta$, $r_{\delta, g}$ are integers.
Notice that \eqref{def} is a generalized eta-quotient
up to a power of $q$.

Recall that for a positive integer $\delta$ and a residue class $g\pmod {\delta}$,
the  {generalized Dedekind eta-function} $\eta_{\delta, g}(\tau)$ is defined by
\begin{align}\label{gen}
\eta_{\delta,g}(\tau)=q^{\frac{\delta}{2}P_2\left(\frac{g}{\delta}\right)}
\prod_{{n>0\atop n \equiv g(\text{mod} {\delta})}}(1-q^n)
\prod_{n>0\atop n \equiv -g(\text{mod} {\delta})}(1-q^n),
\end{align}
where
\[P_2(t)=\{t\}^2-\{t\}+\frac{1}{6}\]
is the second Bernoulli function and $\{t\}$ is the fractional part of $t$,
see, for example, \cite{Robins-1994,Schoeneberg-1974}.
Note that
\begin{align}\label{ge-e}
  \eta_{\delta, 0}(\tau)=\eta^2(\delta\tau)\quad\text{and}\quad\eta_{\delta, \frac{\delta}{2}}(\tau)=\frac{\eta^2(\frac{\delta}{2}\tau)}{\eta^2(\delta\tau)}.
\end{align}
A generalized eta-quotient is a function of the form
\begin{align}\label{geq-def}
\prod_{\delta|M\atop 0\leq g<\delta}\eta_{\delta, g}^{r_{\delta, g}}(\tau),
\end{align}
where $M\geq 1$ and
\begin{align*}
r_{\delta, g}\in
\begin{cases}
\frac{1}{2}\mathbb{Z}, &\text{if}\; g=0\; \text{or}\; g=\frac{\delta}{2},\\[3pt]
\mathbb{Z}, &\text{otherwise},
\end{cases}
\end{align*}
see, for example, Robins \cite{Robins-1994}.
In view of \eqref{ge-e}, when $g=0$ or $g=\frac{\delta}{2}$,
if $r_{\delta, g}\in\frac{1}{2}\mathbb{Z}$,
then the powers of the  eta-functions in \eqref{geq-def} are  integers.

For partition functions $b(n)$ as defined  in \eqref{def},
our algorithm can be extended to
derive Ramanujan-type identities on $b(mn+t)$ for $m>0$ and $0\leq t\leq m-1$,
such as the Ramanujan-type identities on Andrews' singular overpartition functions $\overline{Q}_{3,1}(9n+3)$ and $\overline{Q}_{3,1}(9n+6)$ due to  Shen \cite{Shen-2016}. The extended algorithm can also be employed to
derive dissection formulas,
such as the $2$-dissection formulas of Ramanujan, first proved by Andrews \cite{Andrew-1981}, and the $8$-dissection formulas due to Hirschhorn \cite{Hirschhorn-2001}.

\section{Finding Modular Functions for $\Gamma_1(N)$}\label{construction}

For the partition functions $a(n)$ as defined by \eqref{radu-gf}, namely,
\begin{align*}
  \sum_{n=0}^{\infty}
a(n)q^n=\prod_{\delta | M}
(q^\delta;q^\delta)^{r_\delta}_\infty,
\end{align*}
where $M$ is a positive integer and
$r_\delta$ are integers,
Radu \cite{Radu-2009} defined
\begin{align}\label{gmt}
g_{m,t}(\tau)  = q^{\frac{t-\ell}{m}}\sum\limits_{n=0}^\infty a(mn+t)q^n,
\end{align}
where
\[\ell=-\frac{1}{24}\sum\limits_{\delta|M}\delta r_\delta.\]

Let $\phi(\tau)$ be a generalized eta-quotient,
and let $F(\tau) = \phi(\tau) g_{m, t}(\tau)$.
The objective of this section is to give a criterion for $F(\tau)$ to be a modular function for $\Gamma_1(N)$.
We find that the transformation formula
for $g_{m,t}(\tau)$ under $\Gamma_1(N)^*$
is analogous to the transformation formula of
Radu \cite[Lemma~2.14]{Radu-2009} with respect to $\Gamma_0(N)^*$.
Then we utilize the transformation laws of
Newman \cite{Newman-1959} and Robins \cite{Robins-1994} to obtain the
transformation formula of $F(\tau)$.
With the aid of the Laurent expansions of $\phi(\tau)$ and $g_{m, t}(\tau)$,
we obtain a necessary and sufficient condition for $F(\tau)$
to be a modular function for $\Gamma_1(N)$.

We first state the conditions on $N$.
In fact, we make the following changes on the conditions on $N$ given by Definition 34 and Definition 35 in \cite{Radu-2015}:
Change the condition $\delta| mN$
for every $\delta|M$ with $r_\delta\neq 0$
to $M|N$,
and add the following condition \nameref{con_N_10}.
For completeness,
we list all the conditions on $N$.
Let $\kappa=\gcd(m^2-1, 24)$.
Assume that $N$ satisfies the following conditions:
\begin{description}\label{con}
\setlength{\parskip}{2ex}
  \item[{1}\label{con_N_1}] $M|N$.

  \item[{2}\label{con_N_2}] $p| N$ for any prime $p| m$.

  \item[{3}\label{con_N_6}] $\kappa N\sum\limits_{\delta|M}r_\delta \equiv 0 \pmod{8}$.

  \item[{4}\label{con_N_7}] $\kappa mN^2\sum\limits_{\delta|M}\frac{r_\delta}{\delta}\equiv 0 \pmod{24}$.

  \item[{5}\label{con_N_8}] $\frac{24m}{\gcd(\kappa \alpha(t), 24m)}\left| N\right.$,
  where $\alpha(t) = -\sum_{\delta|M}\delta r_{\delta}-24t$.

  \item[{6}\label{con_N_9}] Let $\prod_{\delta|M}\delta^{|r_\delta|} = 2^zj$,
   where $z\in \mathbb{N}$ and $j$ is odd.
   If $2| m$, then $\kappa N\equiv 0 \pmod 4$ and $Nz\equiv 0 \pmod 8$,
    or $z\equiv 0 \pmod 2$ and $N(j-1)\equiv 0 \pmod 8$.

  \item[{7}\label{con_N_10}]
  Let $\mathbb{S}_n = \{j^2 \pmod n \colon j\in\mathbb{Z}_n,\  \gcd(j, n)=1,\  j\equiv 1 \pmod{N}\}$.
  For any $s\in \mathbb{S}_{24mM}$,
 \[\frac{s-1}{24}\sum\limits_{\delta|M}\delta r_\delta + ts \equiv t \pmod m.\]
\end{description}
Note that there always exists $N$ satisfying the above
conditions, because $N=24mM$ would make a feasible choice.
From now on,
we denote by $\gamma$ the matrix $\left({a\,\,b\atop c\,\,d}\right)$.

\begin{thm}\label{con_F_modular_function}
For a given partition function $a(n)$ as defined by \eqref{radu-gf},
and for given integers $m$ and $t$,
suppose that $N$ is a positive integer satisfying the conditions \nameref{con_N_1}--\nameref{con_N_10}.
Let
\begin{align*}
  F(\tau)=\phi(\tau)\, g_{m,t}(\tau),
\end{align*}
where
\begin{align}\label{phi-form}
\phi(\tau)=\prod_{\delta | N}\eta^{a_{\delta}}(\delta \tau)\,\prod_{{\delta|N\atop 0<g\leq \left\lfloor\delta/2\right\rfloor}}\eta_{\delta,g}^{a_{\delta,g}}(\tau),
\end{align}
and $a_{\delta}$ and $a_{\delta,g}$ are integers.
Then $F(\tau)$ is a modular function with respect to $\Gamma_1(N)$ if and only if $a_{\delta}$ and $a_{\delta,g}$ satisfy the following conditions:

\begin{enumerate}
\setlength{\parskip}{2ex}
  \item[{\rm(1)}\label{F_con_1}] $\sum\limits_{\delta|N}a_\delta+\sum\limits_{\delta|M}r_\delta=0$,

  \item[{\rm(2)}\label{F_con_2}] $N \sum\limits_{\delta|N}\frac{a_\delta}{\delta}
                  +2N\sum\limits_{{\delta|N\atop 0<g\leq \left\lfloor{\delta}/{2}\right\rfloor}}\frac{a_{\delta,g}}{\delta}
                  +Nm\sum\limits_{\delta|M}\frac{r_\delta}{\delta}
                  \equiv0\pmod{24}$,

  \item[{\rm(3)}\label{F_con_3}]
  $\sum\limits_{\delta|N}\delta a_\delta
  +12 \sum\limits_{{\delta|N\atop 0<g\leq \left\lfloor{\delta}/{2}\right\rfloor}}\delta P_2\left(\frac{g}{\delta}\right){a_{\delta,g}}
  +m\sum\limits_{\delta|M}\delta r_\delta
  +\frac{(m^2-1)\alpha(t)}{m }
  \equiv0\pmod{24},$

  \item[{\rm(4)}\label{F_con_4}] For any integer $0<a<12N$ with $\gcd{(a,6)}=1$ and $a\equiv 1\pmod N$,
  $$\prod\limits_{\delta|N}\left(\frac{\delta}{a}\right)^{|a_\delta|}
  \prod\limits_{\delta|M}
  \left(\frac{m\delta}{a}\right)^{|r_\delta|} e^{\sum\limits_{\delta|N}\sum\limits_{g=1}^{{\tiny \left\lfloor\delta/2\right\rfloor}}\pi i\big(\frac{g}{\delta}-\frac{1}{2}\big)(a-1)a_{\delta,g}}=1.$$
\end{enumerate}
\end{thm}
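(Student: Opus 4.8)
The plan is to combine three ingredients: (i) the transformation law for $g_{m,t}(\tau)$ under the group $\Gamma_1(N)^*$, which by the discussion preceding the theorem is entirely analogous to Radu's Lemma~2.14 with $\Gamma_0(N)^*$ replaced by $\Gamma_1(N)^*$; (ii) the classical transformation formulas of Newman \cite{Newman-1959} for ordinary eta-quotients $\prod_\delta \eta^{a_\delta}(\delta\tau)$ and of Robins \cite{Robins-1994} for the generalized eta-functions $\eta_{\delta,g}(\tau)$ under $\gamma=\left({a\,\,b\atop c\,\,d}\right)\in\Gamma_0(N)$ (or $\Gamma_1(N)$); and (iii) the orders at the cusps computed from the Laurent expansions. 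The strategy is the standard one: $F(\tau)$ is a modular function for $\Gamma_1(N)$ if and only if it is weakly modular of weight $0$ (i.e. $F(\gamma\tau)=F(\tau)$ for all $\gamma\in\Gamma_1(N)$) and holomorphic and nonvanishing except possibly at the cusps. I would show that the invariance forces exactly conditions (1)--(4), and that, once these hold, meromorphy at the cusps is automatic (since each factor is meromorphic there), so no further constraint arises.

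First I would write $F(\tau)=\phi(\tau)g_{m,t}(\tau)$ and apply $\gamma\in\Gamma_1(N)$. Using Newman's formula, $\prod_\delta\eta^{a_\delta}(\delta\tau)$ picks up a root of unity times $(c\tau+d)^{w/2}$ with $w=\sum_\delta a_\delta$; using Robins's formula, $\prod_{\delta,g}\eta_{\delta,g}^{a_{\delta,g}}(\tau)$ transforms with weight $0$ but a root-of-unity multiplier; and the $\Gamma_1(N)^*$-analogue of Radu's Lemma~2.14 gives the transformation of $g_{m,t}(\tau)$ with weight $-\tfrac12\sum_\delta r_\delta$ and its own multiplier. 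Matching the automorphy factors $(c\tau+d)^{\ast}$ to total weight $0$ yields condition~(1). The multiplier system is a product of: (a) a quadratic-residue-symbol part $\prod_\delta(\tfrac{\delta}{a})^{|a_\delta|}\prod_\delta(\tfrac{m\delta}{a})^{|r_\delta|}$ coming from the $\varepsilon$-factors in Newman's and Radu's formulas, specialized to the relevant entry $a\equiv d\pmod N$; (b) an exponential-of-Dedekind-sum part together with the $q$-exponents $\tfrac{t-\ell}{m}$, $\tfrac{\delta}{2}P_2(g/\delta)$, and the $\tfrac1{24}$ from $\eta$, which, after collecting the linear and the constant terms in the expansion of the sum over the relevant cusp/coset representatives, produce the two congruences (2) (the coefficient of the "$b/N$"-type term, mod $24$) and (3) (the constant term, mod $24$, where the extra $\tfrac{(m^2-1)\alpha(t)}{m}$ arises from the interaction of $g_{m,t}$ with the $U_m$-type dissection, exactly as the $\kappa=\gcd(m^2-1,24)$ bookkeeping in the conditions on $N$ anticipates); and (c) the residual phase $e^{\sum_{\delta,g}\pi i(\frac{g}{\delta}-\frac12)(a-1)a_{\delta,g}}$ from Robins's multiplier for $\eta_{\delta,g}$, which must equal $1$, giving condition~(4). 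The conditions \nameref{con_N_1}--\nameref{con_N_10} are precisely what is needed for each of these multiplier pieces to be well-defined and to depend only on the residue of $a$ modulo $N$ (in particular \nameref{con_N_10} guarantees that the dissection index $t$ is preserved under the Galois-type action by squares in $\mathbb{S}_{24mM}$, so that $g_{m,t}$ genuinely transforms into itself rather than into $g_{m,t'}$).

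Conversely, assuming (1)--(4), I would check that the product of all three transformation formulas collapses to $F(\gamma\tau)=F(\tau)$ for every $\gamma\in\Gamma_1(N)$: (1) kills the automorphy factor, (2) and (3) kill the $\tau$-dependent and constant exponential contributions modulo $24$, and (4) kills the leftover root of unity. One then notes that $\phi(\tau)$ is, up to a power of $q$, a generalized eta-quotient hence meromorphic on $\mathbb{H}$ and at the cusps, and $g_{m,t}(\tau)$ is a power of $q$ times a $q$-series convergent on $\mathbb{H}$, hence likewise meromorphic at $\infty$; since $\Gamma_1(N)$-invariance propagates meromorphy to all cusps, $F$ is a modular function for $\Gamma_1(N)$.

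The main obstacle I anticipate is bookkeeping: correctly assembling the three multiplier systems — especially reconciling Newman's normalization for $\eta(\delta\tau)$, Robins's normalization for $\eta_{\delta,g}(\tau)$, and Radu's for $g_{m,t}(\tau)$ — and tracking how the restriction to $\Gamma_1(N)$ (where $a\equiv d\equiv1\pmod N$) simplifies the Jacobi symbols and Dedekind-sum phases so that they depend only on a single integer $a$ with $a\equiv1\pmod N$, $\gcd(a,6)=1$. The term $\tfrac{(m^2-1)\alpha(t)}{m}$ in (3) requires care: it emerges from substituting the action of $\gamma$ on the exponent $\tfrac{t-\ell}{m}$ of $q$ in $g_{m,t}$ and using $N$-divisibility of $c$ together with condition~\nameref{con_N_8}, and getting its sign and the modular arithmetic right is the delicate point. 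Everything else is a routine, if lengthy, verification parallel to \cite[Lemma~2.14 and the surrounding discussion]{Radu-2009} and \cite[Definitions 34--35]{Radu-2015}.
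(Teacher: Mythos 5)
Your proposal follows essentially the same route as the paper: derive the transformation law for $F$ under $\Gamma_1(N)^*$ by combining Newman's and Robins's formulas with the $\Gamma_1(N)^*$-analogue of Radu's Lemma~2.14, read off (1) from the weight, (2) and (3) from the two linear coefficients of the exponential multiplier (the paper isolates them by specializing to $\left({1\;0\atop N\;1}\right)$ and $\left({1\quad 1\atop N\;N+1}\right)$), and (4) from the residual root of unity, then verify meromorphy at the cusps separately. The only two points you treat as automatic that the paper must (and does) argue explicitly are the passage from invariance under $\Gamma_1(N)^*$ back to all of $\Gamma_1(N)$ (Newman's transformation formula needs $\gcd(a,6)=1$ and $ac>0$, so one invokes the generation of $\Gamma_1(N)$ by such matrices, Lemma~\ref{gamma1-transformation}) and the finiteness of the principal part of $F(\gamma\tau)$ at every cusp, which requires the explicit Laurent expansions of $g_{m,t}(\gamma\tau)$ and $\phi(\gamma\tau)$ over a complete set of double-coset representatives (Lemmas~\ref{thm-definition-pgamma-1} and~\ref{phi_trans}) rather than periodicity alone.
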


For example,  consider the overpartition function $\overline{p}(n)$.
Recall that an overpartition of a positive integer $n$ is a partition of $n$ where the first occurrence of each distinct part may be overlined, and the number of overpartitions
of $n$ is denoted by $\overline{p}(n)$ for $n\geq 1$ and $\overline{p}(0)=1$.
As noted by Corteel and Lovejoy \cite{Corteel-Lovejoy},
the generating function of $\overline{p}(n)$ is given by
\begin{align*}
\sum_{n=0}^{\infty}\overline{p}(n)q^n=\frac{(q^2;q^2)_{\infty}}{(q;q)^2_{\infty}}.
\end{align*}
For the overpartition function $\overline{p}(5n+2)$, we see that
$N=10$ satisfies the conditions \nameref{con_N_1}--\nameref{con_N_10}.
Next we proceed to find a generalized eta-quotient $\phi(\tau)$ such that $\phi(\tau) g_{5,2}(\tau)$ is a modular function for $\Gamma_1(10)$.
By the above theorem, the function
\[\prod_{\delta|10}\eta^{a_\delta}(\delta\tau)\,
\prod_{{\delta|10\atop 0<g\leq \left\lfloor\delta/2\right\rfloor}}\eta_{\delta,g}^{a_{\delta,g}}(\tau)\, g_{5,2}(\tau)\]
is a modular function for $\Gamma_1(10)$
if and only if $a_\delta$ and $a_{\delta,g}$ fulfill the following conditions:
\begin{align}\label{overpartition_ex}
\begin{cases}
a_1+a_2+a_5+a_{10}-1=0,\\[6pt]
10a_1 +5  a_2+10  a_{2,1}+2  a_5 +4  a_{5,1}+4  a_{5,2}+a_{10}+2  a_{10,1}\\[6pt]
\quad\quad\quad\quad\quad\quad+2  a_{10,2}+2  a_{10,3}+2  a_{10,4}+2   a_{10,5}-3\equiv 0 \pmod{24},\\[6pt]
a_1+2  a_2-2  a_{2,1}+5  a_5+\frac{2  a_{5,1}}{5}-\frac{22  a_{5,2}}{5}+10   a_{10}+\frac{46  a_{10,1}}{5}\\[6pt]
\quad\quad\quad\quad\quad\quad+\frac{4  a_{10,2}}{5}-\frac{26 a_{10,3}}{5}-\frac{44  a_{10,4}}{5}-10  a_{10,5}+\frac{48}{5}\equiv 0 \pmod{24},\\[6pt]
\left(\frac{10}{a}\right)
\prod\limits_{\delta|10}\left(\frac{\delta}{a}\right)^{|a_\delta|}
e^{\sum\limits_{\delta|10}\sum\limits_{g=1}^{{\tiny \left\lfloor\delta/2\right\rfloor}}\pi i\big(\frac{g}{\delta}-\frac{1}{2}\big)(a-1)a_{\delta,g}}=1,
\end{cases}
\end{align}
for any $0<a<120$ with $\gcd(a,6)=1$ and $a\equiv 1\pmod {10}$.
We find that
\begin{align*}
  (a_1,a_2,a_{2,1},a_5,a_{5,1},a_{5,2},a_{10},a_{10,1},&a_{10,2},a_{10,3},a_{10,4},a_{10,5})\\[6pt]
&=(0, 0, 0, 0, 0, 0,  1, 0, 0, 0, -8, 9)
\end{align*}
is an integer solution of \eqref{overpartition_ex}.
Let
\[\phi(\tau)=\frac{\eta(10\tau)\eta_{10,5}^9(\tau)}{\eta_{10,4}^8(\tau)}.\]
Since
\begin{align*}
g_{5,2}(\tau)=q^{\frac{2}{5}}
\sum_{n=0}^\infty \overline{p}(5n+2)q^n,
\end{align*}
we find that
\begin{align}\label{Ftau-Q31}
F(\tau)
=q^{\frac{2}{5}}
\phi(\tau)
  \sum_{n=0}^{\infty}\overline{p}(5n+2)q^n
\end{align}
is a modular function with respect to $\Gamma_1(10)$.

Let
\[\Gamma_1(N)^*=\left\{\left({a\,\,b\atop c\,\,d}
                           \right)\in \Gamma_1(N) \colon \gcd{(a,6)}=1,\  ac>0
\right\}.\]
The following lemma asserts that
the invariance of the function $f(\tau)$ under $\Gamma_1(N)$
is equivalent to the invariance under $\Gamma_1(N)^*$.

\begin{lem}\label{gamma1-transformation}
Let $k$ be an integer, $N$ be a positive integer and
$f\colon\mathbb{H} \rightarrow \mathbb{C}$ be a function such that
\begin{align}\label{lem-gamma1-tran-eq}
  f(\gamma\tau) = (c\tau + d)^{k}f(\tau)
\end{align}
for any $\gamma\in \Gamma_1(N)^*$. Then $f$ is weight-$k$ invariant under $\Gamma_1(N)$.
\end{lem}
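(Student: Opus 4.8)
The plan is to deduce the lemma from a purely group-theoretic fact, namely that $\Gamma_1(N)$ is generated as a group by $\Gamma_1(N)^*$. The bridge is the cocycle identity $j(\gamma\gamma',\tau)=j(\gamma,\gamma'\tau)\,j(\gamma',\tau)$ for the automorphy factor $j(\gamma,\tau)=c\tau+d$. First I would introduce $G=\{\gamma\in\mathrm{SL}_2(\mathbb{Z})\colon f(\gamma\tau)=j(\gamma,\tau)^k f(\tau)\text{ for all }\tau\in\mathbb{H}\}$ and show that $G$ is a subgroup of $\mathrm{SL}_2(\mathbb{Z})$: it contains the identity since $j(I,\tau)=1$; and if $A,B\in G$, then setting $\sigma=A^{-1}B$ and evaluating $f(A\sigma\tau)=f(B\tau)$ in two ways yields $j(A,\sigma\tau)^k f(\sigma\tau)=j(B,\tau)^k f(\tau)$, whence $f(\sigma\tau)=\bigl(j(B,\tau)/j(A,\sigma\tau)\bigr)^k f(\tau)=j(\sigma,\tau)^k f(\tau)$ by the cocycle identity, so $A^{-1}B\in G$; taking $B=I$ shows $G$ is closed under inverses, and hence under products. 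By the hypothesis $\Gamma_1(N)^*\subseteq G$, so $\langle\Gamma_1(N)^*\rangle\subseteq G$, and it remains only to prove $\langle\Gamma_1(N)^*\rangle=\Gamma_1(N)$.

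Since $\Gamma_1(N)^*\subseteq\Gamma_1(N)$ and $\Gamma_1(N)$ is a group, one inclusion is clear. For the reverse, I would first dispose of the translations: the matrices $\gamma_1=\left(\begin{smallmatrix}1&0\\N&1\end{smallmatrix}\right)$ and $\gamma_1\left(\begin{smallmatrix}1&1\\0&1\end{smallmatrix}\right)=\left(\begin{smallmatrix}1&1\\N&N+1\end{smallmatrix}\right)$ both lie in $\Gamma_1(N)^*$ (each has upper-left entry $1$ with $\gcd(1,6)=1$ and lower-left entry $N>0$), so $\left(\begin{smallmatrix}1&1\\0&1\end{smallmatrix}\right)=\gamma_1^{-1}\bigl(\gamma_1\left(\begin{smallmatrix}1&1\\0&1\end{smallmatrix}\right)\bigr)\in\langle\Gamma_1(N)^*\rangle$, and hence so is every power $\left(\begin{smallmatrix}1&n\\0&1\end{smallmatrix}\right)$. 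Now take an arbitrary $\gamma=\left(\begin{smallmatrix}a&b\\c&d\end{smallmatrix}\right)\in\Gamma_1(N)$. If $c=0$ then $ad=1$ with $a\equiv d\equiv 1\pmod N$, so $\gamma=\pm\left(\begin{smallmatrix}1&b\\0&1\end{smallmatrix}\right)$ and $\gamma\in\langle\Gamma_1(N)^*\rangle$ (the sign being relevant only when $N\le2$, in which case $-I$ is the square of an element of $\Gamma_1(N)^*$). If $c\neq 0$, I would show there is an integer $n$ with $\left(\begin{smallmatrix}1&n\\0&1\end{smallmatrix}\right)\gamma=\left(\begin{smallmatrix}a+nc&\ast\\c&d\end{smallmatrix}\right)\in\Gamma_1(N)^*$; since $N\mid c$ this matrix is automatically in $\Gamma_1(N)$, so only the two requirements $\gcd(a+nc,6)=1$ and $(a+nc)c>0$ have to be met, and then $\gamma=\left(\begin{smallmatrix}1&-n\\0&1\end{smallmatrix}\right)\bigl(\left(\begin{smallmatrix}1&n\\0&1\end{smallmatrix}\right)\gamma\bigr)\in\langle\Gamma_1(N)^*\rangle$.

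The only substantive point is the existence of such an $n$, and this is the step I expect to require the most care, though it is entirely elementary. Using $\gcd(a,c)=1$, one argues modulo $2$ and modulo $3$ separately: for $p\in\{2,3\}$, if $p\mid c$ then $p\nmid a$, so $p\nmid a+nc$ for every $n$; if $p\nmid c$ then $a+nc$ runs through all residues modulo $p$, so it is nonzero mod $p$ for all $n$ outside one residue class mod $p$. By the Chinese Remainder Theorem there is a residue class $n\equiv n_0\pmod 6$ on which $\gcd(a+nc,6)=1$; and since $a+nc\to+\infty$ or $-\infty$ as $n\to+\infty$ along this class according as $c>0$ or $c<0$, one may choose $n$ in the class large enough that $(a+nc)c>0$. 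This establishes $\langle\Gamma_1(N)^*\rangle=\Gamma_1(N)$, and together with the first paragraph it gives $\Gamma_1(N)\subseteq G$, which is precisely the assertion of the lemma.
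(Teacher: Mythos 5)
Your proof is correct, but it reaches the conclusion by a genuinely different route than the paper. The paper's proof leans on an external input: it cites Lemma~3 of Newman (1959) for the fact that $\Gamma_1(N)$ is generated by the set $A$ of its elements with $\gcd(a,6)=1$, and then only has to upgrade the hypothesis from $\Gamma_1(N)^*$ to $A$, which it does by left-multiplying a $\gamma\in A$ with $ac\le 0$ by a suitable $\left(\begin{smallmatrix}1&0\\Nx&1\end{smallmatrix}\right)\in\Gamma_1(N)^*$ to force the sign condition, exactly the cocycle manipulation you package into your subgroup $G$. You instead prove the generation statement $\langle\Gamma_1(N)^*\rangle=\Gamma_1(N)$ from scratch: translations are extracted as $\gamma_1^{-1}(\gamma_1 T)$, and an arbitrary $\gamma$ with $c\neq 0$ is pushed into $\Gamma_1(N)^*$ by a power of $T$ chosen via CRT mod $6$ and a sign adjustment. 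What your version buys is self-containedness (no appeal to Newman) and it makes explicit the ``set of matrices where the functional equation holds is a subgroup'' step that the paper uses only implicitly; what it costs is the extra elementary casework, including the $c=0$, $N\le 2$ corner where you assert without exhibiting it that $-I$ is a square from $\Gamma_1(N)^*$ --- this is true (e.g.\ $\left(\begin{smallmatrix}1&-1\\2&-1\end{smallmatrix}\right)$ has trace $0$, lies in $\Gamma_1(N)^*$ for $N\in\{1,2\}$, and squares to $-I$ by Cayley--Hamilton), but you should display such a matrix rather than leave it as an aside. With that one line added, the argument is complete.
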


\begin{proof}
Let
$$A=\left\{\left({a\,\,\,b\atop c\,\,\,d}\right)\in\Gamma_1(N)\colon\gcd(a, 6)=1\right\}.$$
By Lemma 3 of Newman \cite{Newman-1959},
we know that $\Gamma_1(N)$ is generated by $A$.
Hence it suffices to show that $$f(\gamma\tau) = (c\tau + d)^{k}f(\tau)$$
for any $\gamma\in A$.
By the condition of Lemma \ref{gamma1-transformation},
we may restrict our attention only to two cases.
(1) $\gamma\in A$, $a>0$ and $c\le 0$.
(2) $\gamma\in A$, $a<0$ and $c\ge 0$.
Here we only consider the first case,
and the second case can be justified in the same manner.
For the first case, since  $a>0$ and $c\le 0$,
 there exists a positive integer
$x$ such that $ax+\frac{c}{N}>0$.
Let
\begin{align*}
\gamma_1=\left({\,\,1\,\,\,\,\,\,0 \atop Nx\,\,\,1}\right)\quad \text{and}\quad \gamma_2=\left({\,\,a\,\,\quad\quad\quad\,b \atop Nax+c\,\,Nbx+d}\right).
\end{align*}
Then $\gamma_2=\gamma_1\gamma$ and $\gamma_1\in \Gamma_1(N)^*$.
Therefore,
\begin{align}\label{eq2_gms}
 f(\gamma_2\tau)  = f(\gamma_1(\gamma\tau)) =\left(Nx(\gamma\tau) +1\right)^{k}f(\gamma\tau).
\end{align}
Since $\gamma\in A$, we have $\gcd(a, 6)=1$,
and so $\gamma_2\in \Gamma_1(N)^*$.
Applying \eqref{lem-gamma1-tran-eq} with $\gamma_2\in \Gamma_1(N)^*$,
we get
\begin{align}\label{eq1_gms}
 f(\gamma_2\tau) = \left((Nax+c)\tau + (Nbx+d)\right)^{k}f(\tau).
\end{align}
Combining \eqref{eq2_gms} and \eqref{eq1_gms},
we deduce that
\[f(\gamma\tau) = \left(c\tau+d\right)^{k}f(\tau),\]
as claimed.
\end{proof}

The following transformation formula
for $g_{m,t}(\tau)$ under $\Gamma_1(N)^*$
is analogous to  the transformation formula of
Radu \cite[Lemma~2.14]{Radu-2009} with respect to $\Gamma_0(N)^*$.
The proof parallels that of Radu, and hence it is omitted.

\begin{lem}\label{tran_g_gamma-1}
For a given partition function $a(n)$ as defined by \eqref{radu-gf},
and for given integers $m$ and $t$,
let $N$ be a positive integer satisfying the above conditions \nameref{con_N_1}--\nameref{con_N_10}.
For any $\gamma \in \Gamma_1(N)^*$,
we have
\begin{align*}
g_{m,t}(\gamma\tau) = (c\tau+d)^{\frac{1}{2}\sum\limits_{\delta|M}r_\delta}\, e^{\pi i\zeta(\gamma)}\,\prod\limits_{\delta|M}L(m\delta c,a)^{|r_\delta|}
\, g_{m, t}(\tau),
\end{align*}
where
\begin{align*}
L(c,a)=
\begin{cases}
\left(\frac{c}{a} \right), & \hbox{if $a>0$}, \\[6pt]
\left(\frac{-c}{-a} \right), & \hbox{otherwise,}
\end{cases}
\end{align*}
$\left(\frac{\ }{\ } \right)$ is the
Jacobi symbol,
\begin{align*}
\zeta(\gamma)&=\frac{ab(m^2-1)\alpha(t)}{12m}
+\frac{abm}{12}\sum\limits_{\delta|M}\delta r_{\delta}-
\frac{acm}{12}\sum\limits_{\delta|M}\frac{r_\delta}{\delta}
+\frac{\mathrm{sgn}(c)\,(a-1)}{4}\sum\limits_{\delta|M}r_\delta,
\end{align*}
and $\alpha(t)$ is defined as in the condition \nameref{con_N_8}.
\end{lem}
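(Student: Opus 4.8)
\emph{Proof sketch.} The plan is to follow Radu's proof of the $\Gamma_0(N)^*$-analogue \cite[Lemma~2.14]{Radu-2009}, adapting it to $\Gamma_1(N)^*$ and to the modified conditions \nameref{con_N_1}--\nameref{con_N_10}. Put $A(\tau)=\prod_{\delta|M}\eta^{r_\delta}(\delta\tau)$, so that $\sum_{n\ge 0}a(n)q^n=q^{\ell}A(\tau)$ with $\ell=-\frac{1}{24}\sum_{\delta|M}\delta r_\delta$; note $a(0)=1$ and $\ell-t=\alpha(t)/24$. Picking out the residue class $t$ modulo $m$ from the Fourier expansion of $A$ and unwinding \eqref{gmt} yields the $U_m$-type identity
\begin{align*}
g_{m,t}(\tau)=\frac1m\sum_{\mu=0}^{m-1}e^{\frac{2\pi i\mu(\ell-t)}{m}}\,A\!\left(\frac{\tau+\mu}{m}\right).
\end{align*}
Everything then comes down to transforming $A$ at the points $\dfrac{\gamma\tau+\mu}{m}$.

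\emph{Rewriting the arguments.} Fix $\gamma\in\Gamma_1(N)^*$. For each $\mu$ I would write
\[
\frac{\gamma\tau+\mu}{m}=\frac{(a+\mu c)\tau+(b+\mu d)}{m(c\tau+d)}=\gamma_\mu\cdot\frac{\tau+\lambda(\mu)}{m},\qquad
\gamma_\mu=\begin{pmatrix} a+\mu c & b_\mu\\[2pt] mc & d-c\lambda(\mu)\end{pmatrix},
\]
where $\lambda(\mu)\in\{0,\dots,m-1\}$ is the unique solution of $(a+\mu c)\lambda(\mu)\equiv b+\mu d\pmod m$ and $b_\mu=\frac1m\bigl(b+\mu d-(a+\mu c)\lambda(\mu)\bigr)$; a direct check gives $\det\gamma_\mu=ad-bc=1$. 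Conditions \nameref{con_N_1}, \nameref{con_N_2} are exactly what make this work: every prime dividing $m$ divides $N|c$ while $a\equiv1\pmod N$, hence $\gcd(a+\mu c,m)=1$, so $\lambda(\mu)$ exists, $\gamma_\mu$ is integral, and $\mu\mapsto\lambda(\mu)$ is a bijection of $\mathbb Z/m\mathbb Z$ (it is the action of $\left(\begin{smallmatrix} d&b\\ c&a\end{smallmatrix}\right)$, which has determinant $1$); moreover $M|N|c$ gives $M|mc$, so in fact $\gamma_\mu\in\Gamma_0(M)$. Hence
\[
g_{m,t}(\gamma\tau)=\frac1m\sum_{\mu=0}^{m-1}e^{\frac{2\pi i\mu(\ell-t)}{m}}\,A\!\left(\gamma_\mu\cdot\frac{\tau+\lambda(\mu)}{m}\right).
\]

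\emph{Applying the eta transformation and collapsing the sum.} Next I would invoke the transformation law of Newman for $A=\prod_{\delta|M}\eta^{r_\delta}(\delta\tau)$ under $\gamma_\mu\in\Gamma_0(M)$. Since the lower row of $\gamma_\mu$ is $(mc,\,d-c\lambda(\mu))$ and the argument is $w=\frac{\tau+\lambda(\mu)}{m}$, one has $mc\,w+(d-c\lambda(\mu))=c\tau+d$, so
\[
A\!\left(\gamma_\mu\cdot\tfrac{\tau+\lambda(\mu)}{m}\right)=(c\tau+d)^{\frac12\sum_{\delta|M} r_\delta}\,\epsilon_A(\gamma_\mu)\,A\!\left(\tfrac{\tau+\lambda(\mu)}{m}\right),
\]
with $\epsilon_A(\gamma_\mu)$ the explicit Newman multiplier (a product of Jacobi symbols times a root of unity built from Dedekind sums). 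Substituting this and re-indexing the sum by $\nu=\lambda(\mu)$ (a bijection), the assertion reduces to the single phase identity
\[
\epsilon_A(\gamma_\mu)=e^{\pi i\zeta(\gamma)}\Bigl(\textstyle\prod_{\delta|M}L(m\delta c,a)^{|r_\delta|}\Bigr)\,e^{\frac{2\pi i(\lambda(\mu)-\mu)(\ell-t)}{m}}\qquad\text{for every }\mu,
\]
for then the $\mu$-independent constant pulls out and $\frac1m\sum_{\nu}e^{\frac{2\pi i\nu(\ell-t)}{m}}A(\frac{\tau+\nu}{m})=g_{m,t}(\tau)$ reassembles, giving $g_{m,t}(\gamma\tau)=(c\tau+d)^{\frac12\sum r_\delta}e^{\pi i\zeta(\gamma)}\prod_{\delta|M}L(m\delta c,a)^{|r_\delta|}g_{m,t}(\tau)$.

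\emph{The main obstacle.} The work is in verifying this last phase identity, i.e.\ that $\epsilon_A(\gamma_\mu)\,e^{-2\pi i(\lambda(\mu)-\mu)(\ell-t)/m}$ is independent of $\mu$ and equals $e^{\pi i\zeta(\gamma)}\prod_{\delta|M}L(m\delta c,a)^{|r_\delta|}$. One feeds the explicit formula for $\epsilon_A(\gamma_\mu)$ --- with entries $a+\mu c$, $mc$, $d-c\lambda(\mu)$, $b_\mu$ --- through quadratic reciprocity (to turn the $\mu$-dependent Jacobi symbols into the $\mu$-independent $L(m\delta c,a)$, using $a\equiv d\equiv1$, $c\equiv0\pmod N$ together with $M|N$ and $p|N$ for $p|m$, and handling the parity cases of the eta multiplier) and through Dedekind-sum reciprocity (to collapse the exponential part), using throughout $ad-bc=1$, the congruence $(a+\mu c)\lambda(\mu)\equiv b+\mu d\pmod m$, and $\ell-t=\alpha(t)/24$. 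Conditions \nameref{con_N_3}--\nameref{con_N_7} are what force the resulting $q$-powers and sign factors to be integral or trivial, \nameref{con_N_8} controls the denominator $24m$ produced by $\alpha(t)$, \nameref{con_N_9} takes care of the case $2|m$, and \nameref{con_N_10} --- which one checks amounts to $(s-1)\alpha(t)\equiv0\pmod{24m}$ for every square $s$ coprime to $24mM$ with $s\equiv1\pmod N$ --- is precisely what guarantees that re-indexing by $\lambda$ returns $g_{m,t}$ rather than some $g_{m,t'}$ with $t'\not\equiv t\pmod m$. Since $\Gamma_1(N)^*\subseteq\Gamma_0(N)^*$, the computation is structurally identical to Radu's; the only genuinely new point is to confirm that the modified hypotheses on $N$ suffice wherever his original conditions were used, which is why the proof parallels his and is omitted here.
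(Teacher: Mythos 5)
The paper omits this proof entirely, saying only that it ``parallels that of Radu'' (his Lemma~2.14 for $\Gamma_0(N)^*$), and your sketch is exactly that adaptation: the $U_m$-type decomposition of $g_{m,t}$, the factorization $\frac{\gamma\tau+\mu}{m}=\gamma_\mu\cdot\frac{\tau+\lambda(\mu)}{m}$ with $\gamma_\mu\in\Gamma_0(M)$, the eta-multiplier computation, the re-indexing by the bijection $\mu\mapsto\lambda(\mu)$, and the correct accounting of where conditions \nameref{con_N_1}--\nameref{con_N_10} enter (in particular that \nameref{con_N_10} is what forces $t'=t$). The only caveat is that the crux --- verifying the $\mu$-independent phase identity via quadratic reciprocity and Dedekind-sum reciprocity --- is described rather than carried out, but that matches the level of detail the paper itself supplies.
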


Next we derive a transformation formula for $F(\tau)$ under $\Gamma_1(N)^*$.
Recall the notation of Schoeneberg  \cite{Schoeneberg-1974}:
\begin{align}\label{Schoeneberg's notation}
\eta_{g,h}^{(s)}(\tau)= \alpha_0(h){e}^{\pi {i}P_2(\frac{g}{\delta})\tau}\rule{-10pt}{0pt}\prod\limits_{{m>0\atop m\equiv g(\text{mod}{\delta})}}\rule{-10pt}{0pt}
(1-\zeta_\delta^h{e}^{\frac{2\pi{i}\tau}{\delta}m})
\rule{-10pt}{0pt}\prod\limits_{{m>0 \atop m\equiv -g(\text{mod}{\delta})}}\rule{-10pt}{0pt}(1-\zeta_{\delta}^{-h}{e}^{\frac{2\pi{i}\tau}{\delta}m}),
\end{align}
where $\zeta_\delta$ is a primitive $\delta$-th root of unity,
\begin{eqnarray*}
\alpha_0(h) =
\begin{cases}
(1-\zeta_\delta^{-h}){e}^{\pi{i}P_1\left(\frac{h}{\delta}\right)}, & \text{if $g\equiv 0$  (mod $\delta$) and $h \not\equiv 0$
(mod $\delta$)},\\[3pt]
1, & \text{otherwise},
\end{cases}
\end{eqnarray*}
the first Bernoulli function $P_1(x)$ is given by
 \begin{eqnarray*}
P_1(x) =
\begin{cases}
x-\lfloor x\rfloor-\frac{1}{2}, & \text{if}\,\,x\not\in \mathbb{Z}, \\[3pt]
 0, & \text{otherwise},
\end{cases}
\end{eqnarray*}
and $\lfloor x\rfloor$ is the greatest integer less than or equal to $x$.
Since
\begin{align*}
\eta_{\delta,g}(\tau)=q^{\frac{\delta}{2}P_2\left(\frac{g}{\delta}\right)}
\prod_{{n>0\atop n \equiv g(\text{mod}{\delta})}}(1-q^n)
\prod_{n>0\atop n \equiv -g(\text{mod}{\delta})}(1-q^n),
\end{align*}
we have
\begin{align}\label{S_s}
  \eta_{\delta, g}(\tau) = \eta_{g, 0}^{(s)}(\delta\tau).
\end{align}

\begin{lem}\label{trans-F-lem}
For a given partition function $a(n)$ as defined by \eqref{radu-gf},
and for given integers $m$ and $t$,
let $N$ be a positive integer satisfying the conditions \nameref{con_N_1}--\nameref{con_N_10}, and
\begin{align}\label{thm-con_F}
  F(\tau)=\prod_{\delta | N}\eta^{a_{\delta}}(\delta \tau)\,\prod_{{\delta|N\atop 0<g\leq \left\lfloor\delta/2\right\rfloor}}\eta_{\delta,g}^{a_{\delta,g}}(\tau)\, g_{m,t}(\tau),
\end{align}
where $a_{\delta}$ and $a_{\delta,g}$ are integers.
Then for any $\gamma\in\Gamma_1(N)^*$,
\begin{align}\label{F-trans-law-1}
F(\gamma\tau)=
\prod_{\delta |N}L\left(\frac{c}{\delta},a\right) ^{a_\delta}
\prod\limits_{\delta|M}L&(m\delta c,a)^{|r_\delta|}
e^{\pi i \left(\nu(\gamma)+\xi(\gamma)\right)}\notag\\[2pt]
&\times(c\tau+d)^{\frac{1}{2}\Big(\sum\limits_{\delta|N}a_\delta+
\sum\limits_{\delta|M}r_\delta\Big)}
F{(\tau)},
\end{align}
where
\begin{align}\label{nu-def}
\nu(\gamma)=\sum\limits_{\delta|N\atop 0<g\leq \lfloor\delta/2\rfloor}\left(\frac{g}{\delta}-\frac{1}{2}\right)(a-1) a_{\delta,g}
\end{align}
and
\begin{align}
\xi(\gamma)&=\frac{a-1}{4}\sgn{(c)}\bigg(\sum\limits_{\delta|N}a_\delta+\sum\limits_{\delta|M}r_\delta\bigg)\notag\\[6pt]
&\quad -ac\bigg(
\sum\limits_{\delta|N}\frac{a_\delta}{12\delta}
                  +\sum\limits_{{\delta|N\atop 0<g\leq \left\lfloor{\delta}/{2}\right\rfloor}}\frac{a_{\delta,g}}{6\delta}
                  +\sum\limits_{\delta|M}\frac{mr_\delta}{12\delta}
\bigg)\notag\\[6pt]
&\quad +ab \bigg(
\sum\limits_{\delta|N}\frac{\delta a_\delta}{12}
+\sum\limits_{{\delta|N\atop 0<g\leq \left\lfloor{\delta}/{2}\right\rfloor}}\delta P_2\left(\frac{g}{\delta}\right){a_{\delta,g}}
+\sum\limits_{\delta|M}\frac{m\delta r_\delta}{12}
+\frac{(m^2-1)\alpha(t)}{12m}
\bigg).\label{xi-def}
\end{align}
\end{lem}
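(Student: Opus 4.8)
The plan is to write $F$ as a product of three factors,
\[
F(\tau)=\Phi_1(\tau)\,\Phi_2(\tau)\,g_{m,t}(\tau),\qquad
\Phi_1(\tau)=\prod_{\delta|N}\eta^{a_{\delta}}(\delta\tau),\qquad
\Phi_2(\tau)=\prod_{{\delta|N\atop 0<g\leq\lfloor\delta/2\rfloor}}\eta_{\delta,g}^{a_{\delta,g}}(\tau),
\]
to compute the transformation of each factor under an arbitrary $\gamma\in\Gamma_1(N)^*$, and to multiply the three resulting formulas. The third factor is already in hand: Lemma~\ref{tran_g_gamma-1} gives
\[
g_{m,t}(\gamma\tau)=e^{\pi i\zeta(\gamma)}\,\prod_{\delta|M}L(m\delta c,a)^{|r_\delta|}\,(c\tau+d)^{\frac12\sum_{\delta|M}r_\delta}\,g_{m,t}(\tau),
\]
so the remaining work is to transform $\Phi_1$ and $\Phi_2$ and to check that the three multiplier systems combine into exactly the factor in \eqref{F-trans-law-1}.

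For $\Phi_1$, note that $\gamma\in\Gamma_1(N)^*$ gives $N\mid c$, hence $\delta\mid c$ for every $\delta\mid N$, and $c\neq0$ since $ac>0$. Writing $\gamma_\delta=\left({a\,\,b\delta\atop c/\delta\,\,d}\right)\in\mathrm{SL}_2(\mathbb{Z})$ we have $\delta\gamma\tau=\gamma_\delta(\delta\tau)$, so $\eta(\delta\gamma\tau)=\eta\bigl(\gamma_\delta(\delta\tau)\bigr)$. I would apply here the transformation law of Newman~\cite{Newman-1959}: because the entries of $\gamma_\delta$ inherit $a\equiv d\equiv1\pmod N$, the multiplier of $\eta(\delta\tau)$ is Dedekind-sum-free, and after a quadratic-reciprocity normalization of the Jacobi symbol it takes the form $L\!\left(\tfrac{c}{\delta},a\right)e^{\pi i\,\omega_\delta(\gamma)}$, where $\omega_\delta(\gamma)$ is an explicit $\mathbb{Q}$-linear form in the entries of $\gamma$, times $(c\tau+d)^{1/2}$. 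Raising to the power $a_\delta$ and taking the product over $\delta\mid N$ yields the transformation of $\Phi_1$, with a factor $\prod_{\delta|N}L(\tfrac{c}{\delta},a)^{a_\delta}$, an exponential $e^{\pi i\sum_{\delta|N}a_\delta\,\omega_\delta(\gamma)}$, and a power $(c\tau+d)^{\frac12\sum_{\delta|N}a_\delta}$.

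For $\Phi_2$, \eqref{S_s} gives $\eta_{\delta,g}(\gamma\tau)=\eta_{g,0}^{(s)}\bigl(\gamma_\delta(\delta\tau)\bigr)$ with the same $\gamma_\delta$. Schoeneberg's transformation law~\cite{Schoeneberg-1974} for $\eta_{g,h}^{(s)}$ under $\mathrm{SL}_2(\mathbb{Z})$ (equivalently, the transformation law of Robins~\cite{Robins-1994}) sends the index pair $(g,0)$ to $(ga,\,gb\delta)$, with a root-of-unity multiplier assembled from the quantities $\alpha_0$ and $P_1$ appearing in \eqref{Schoeneberg's notation}. Since $\delta\mid N$ forces $a\equiv1\pmod\delta$, we have $ga\equiv g$ and $gb\delta\equiv0\pmod\delta$, and the quasi-periodicity of $\eta_{g,h}^{(s)}$ in its two indices modulo $\delta$ brings us back to $\eta_{g,0}^{(s)}(\delta\tau)=\eta_{\delta,g}(\tau)$ at the cost of a further root of unity (the boundary case $g=\delta/2$ being consistent because an even $\delta\mid N$ forces $N$, hence $a$, to be odd). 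As every $\eta_{\delta,g}$ occurring here has weight $0$ --- the range $0<g\leq\lfloor\delta/2\rfloor$ forces $g\not\equiv0\pmod\delta$ --- the factor $\Phi_2$ transforms with no power of $(c\tau+d)$ and with a pure root of unity $e^{\pi i\mu(\gamma)}$, where $\mu(\gamma)$ is a $\mathbb{Q}$-linear combination of the $a_{\delta,g}$ with coefficients depending on $a,b,c,d,g,\delta$.

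It remains to multiply the three transformation formulas. The powers of $(c\tau+d)$ add up to $\tfrac12\bigl(\sum_{\delta|N}a_\delta+\sum_{\delta|M}r_\delta\bigr)$, and the Jacobi symbols collect into $\prod_{\delta|N}L(\tfrac{c}{\delta},a)^{a_\delta}\prod_{\delta|M}L(m\delta c,a)^{|r_\delta|}$, as in \eqref{F-trans-law-1}. The crux is to verify that $\sum_{\delta|N}a_\delta\,\omega_\delta(\gamma)+\mu(\gamma)+\zeta(\gamma)$ agrees, modulo $2$, with $\nu(\gamma)+\xi(\gamma)$ from \eqref{nu-def} and \eqref{xi-def}: here the $P_2$-exponents of the generalized eta-functions, together with the $q^{(t-\ell)/m}$-prefactor of $g_{m,t}$, produce the $ab$-terms of $\xi(\gamma)$; the linear parts of Newman's and Schoeneberg's multipliers produce the $ac$-terms; and the $\alpha_0$- and $P_1$-data produce $\nu(\gamma)$ together with the $\tfrac{a-1}{4}\sgn(c)$ terms. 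I expect the main obstacle to be precisely this bookkeeping: placing Newman's and Schoeneberg's multipliers in a common Dedekind-sum-free normalization, carrying out the index reductions modulo $\delta$ without sign errors, and tracking parities so that the $\sgn(c)$ and $(a-1)$ factors land exactly as stated. The conditions \nameref{con_N_1}--\nameref{con_N_10} --- in particular \nameref{con_N_8}, which governs $\alpha(t)/m$ --- are what guarantee that Lemma~\ref{tran_g_gamma-1} applies and that every exponent occurring is such that \eqref{F-trans-law-1} is meaningful.
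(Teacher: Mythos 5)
Your proposal is correct and follows essentially the same route as the paper: split $F$ into the eta-quotient, the generalized eta-quotient, and $g_{m,t}$; transform the first via the matrices $\gamma_\delta=\left({a\ \,\delta b\atop c/\delta\ \,d}\right)$ and Newman's law, the last via Lemma~\ref{tran_g_gamma-1}, and multiply. The only deviation is in the middle factor, where the paper simply quotes Robins' Theorem~2 for $\eta_{\delta,g}(\gamma\tau)$ with $\gamma\in\Gamma_1(N)^*$, whereas you re-derive that formula from Schoeneberg's transformation of $\eta_{g,h}^{(s)}$ together with the index reduction modulo $\delta$ --- a valid but longer path (and note the small slip: an even $\delta\mid N$ forces $N$ to be \emph{even} and hence $a$ odd, though $a$ is odd anyway since $\gcd(a,6)=1$).
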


\begin{proof}
For any $\gamma=\left({a\,\,b\atop c\,\,d}\right)\in\Gamma_1(N)^*$,
we have
\begin{align}\label{F_trans_for-1}
 F(\gamma\tau)=\prod_{\delta | N}\eta^{a_{\delta}}(\delta \gamma\tau) \prod_{{\delta|N\atop 0<g\leq \left\lfloor\delta/2\right\rfloor}}\eta_{\delta,g}^{a_{\delta,g}}(\gamma\tau)\, g_{m,t}(\gamma\tau).
\end{align}
For any $\delta| N$,
let $\gamma_\delta'=\left({a\,\,\delta b\atop \frac{c}{\delta}\,\,\,d\,}\right)$.
Since $\gamma\in\Gamma_1(N)^*$, we have $N|c$ and so $\delta|c$ for any $\delta|N$. It follows that $\gamma_\delta'\in \Gamma$.
Thus \eqref{F_trans_for-1} can be written as
\begin{align}\label{expression_F-1}
 F(\gamma\tau)=\prod_{\delta | N}\eta^{a_{\delta}}(\gamma_\delta'(\delta\tau))\prod_{{\delta|N\atop 0<g\leq \left\lfloor\delta/2\right\rfloor}}\eta_{\delta,g}^{a_{\delta,g}}(\gamma\tau)\, g_{m,t}(\gamma\tau).
\end{align}
The transformation formula of Newman \cite[Lemma 2]{Newman-1959} states that
for any $\gamma\in\Gamma$ with $a>0$, $c>0$ and $\gcd(a,6)=1$,
\begin{align*}
  \eta(\gamma\tau)=\left(\frac{c}{a}\right)e^{-\frac{a\pi i}{12}(c-b-3)}(-i(c\tau+d))^{\frac{1}{2}}\eta(\tau).
\end{align*}
Therefore, for any $\gamma\in\Gamma$ with $ac>0$ and $\gcd(a,6)=1$,
we have
\begin{align}\label{trans-eta-newman}
\eta(\gamma\tau) = L(c,a)\, e^{\pi i(\frac{a}{12}(-c+b)+\frac{a-1}{4}
  \mathrm{sgn}(c))}(c\tau+d)^{\frac{1}{2}}\eta(\tau).
\end{align}
Since $\gamma\in\Gamma_1(N)^*$,
we see that
$\gcd(a, 6)=1$ and $ac>0$.
Applying
the transformation formula \eqref{trans-eta-newman} to each $\gamma_\delta'$,
we deduce that
\begin{align}\label{prod_eta_trans-1}
\prod_{\delta |N}\eta^{a_{\delta}}(\gamma_\delta'(\delta\tau))
=\prod_{\delta |N} L\left(\frac{c}{\delta},a\right) ^{a_\delta}  e^{\pi i\left(\frac{a}{12}(-\frac{c}{\delta}+\delta b)+ \frac{a-1}{4}
  \mathrm{sgn}(c)\right)a_\delta} (c\tau+d)^{\frac{a_\delta}{2}}\eta^{a_\delta}(\delta\tau).
\end{align}
Using the transformation formula of Robins \cite[Theorem~2]{Robins-1994}:
\begin{align*}
\eta_{\delta, g}(\gamma\tau) = e^{\pi i(\delta abP_2\left(\frac{g}{\delta}\right) - \frac{ac}{6\delta} + (a-1)(\frac{g}{\delta}- \frac{1}{2}))} \eta_{\delta, g}(\tau),
\end{align*}
we find that
\begin{align}\label{eta_trans_F-1}
\prod_{{\delta|N\atop 0<g\leq \left\lfloor\delta/2\right\rfloor}}\eta_{\delta,g}^{a_{\delta,g}}(\gamma\tau)
=
\prod_{{\delta|N\atop 0<g\leq \left\lfloor\delta/2\right\rfloor}}
e^{\pi i(\delta abP_2\left(\frac{g}{\delta}\right) - \frac{ac}{6\delta} + (a-1)(\frac{g}{\delta}- \frac{1}{2}))a_{\delta,g}} \eta_{\delta,g}^{a_{\delta,g}}(\tau).
\end{align}
Substituting the transformation formulas in \eqref{prod_eta_trans-1},
\eqref{eta_trans_F-1} and Lemma \ref{tran_g_gamma-1} into
\eqref{expression_F-1}, we reach the transformation formula \eqref{F-trans-law-1}.
\end{proof}

To prove Theorem \ref{con_F_modular_function},
we need the Laurent expansions of $g_{m,t}(\gamma\tau)$ and $\phi(\gamma\tau)$ for $\gamma\in\Gamma$.
Let us recall the two maps $p\colon \Gamma \times \mathbb{Z}_m \rightarrow \mathbb{Q}$
and $p\colon \Gamma\rightarrow \mathbb{Q}$
defined by Radu \cite{Radu-2009}, namely, for $\gamma\in\Gamma$ and $\lambda\in \mathbb{Z}_m$,
\begin{align}\label{def_p_gamma-l-1}
p(\gamma, \lambda)  =  \frac{1}{24}\sum\limits_{\delta|M}\frac{\gcd^2(\delta(a+\kappa\lambda c), mc)}{\delta m}r_\delta
\end{align}
and for $\gamma\in\Gamma$,
\begin{align}\label{def_p_gamma-1}
p(\gamma) = \min\{p(\gamma, \lambda)\colon \lambda=0, 1, \ldots, m-1\}.
\end{align}
The parabolic subgroup of $\Gamma$ is defined by
\[\Gamma_\infty=\left\{\pm\left({1\,\,b\atop 0\,\,1}\right)\colon b\in\mathbb{Z}\right\}.\]
For any $\gamma\in\Gamma$,
the $(\Gamma_1(N), \Gamma_\infty)$-double coset of $\gamma$ is given by
\[\Gamma_1(N)\gamma\Gamma_\infty = \{\gamma_{N} \gamma \gamma_\infty\colon
\gamma_{N}\in\Gamma_1(N),\, \gamma_\infty\in\Gamma_\infty\}.\]
Assume that $\Gamma$ has the following disjoint decomposition
\begin{align}\label{double_decomp}
\Gamma = \bigcup_{i = 1}^\epsilon \Gamma_1(N)\gamma_i\Gamma_\infty,
\end{align}
where $R = \{\gamma_1, \gamma_2, \ldots, \gamma_\epsilon\} \subseteq \Gamma$.
Denote the set of $(\Gamma_1(N), \Gamma_\infty)$-double cosets in $\Gamma$ by $\Gamma_1(N)\backslash \Gamma/\Gamma_\infty$.
Then \eqref{double_decomp} can be written as
\[\Gamma_1(N)\backslash \Gamma/\Gamma_\infty = \{\Gamma_1(N)\gamma\Gamma_\infty\colon \gamma\in R\}.\]
We say that $R$ is a complete set of representatives
of the double cosets $\Gamma_1(N)\backslash \Gamma/\Gamma_\infty$.

The following lemma gives a Laurent expansion of $g_{m,t}(\gamma\tau)$,
and the proof is similar
to that of Lemma 3.4 in Radu \cite{Radu-2009}, and hence it is omitted.

\begin{lem}\label{thm-definition-pgamma-1}
For a given partition function $a(n)$ as defined by \eqref{radu-gf},
and for given integers $m$ and $t$,
let $N$ be a positive integer satisfying the conditions \nameref{con_N_1}--\nameref{con_N_10},
and $R = \{\gamma_1, \gamma_2, \ldots, \gamma_\epsilon\}$
be a complete set of representatives
of the double cosets $\Gamma_1(N)\backslash \Gamma/\Gamma_\infty$.
For any $\gamma \in \Gamma$,
assume that $\gamma\in\Gamma_{1}(N)\gamma_i\Gamma_\infty$ for
some $1\leq i\leq \epsilon$.
Then there exists an integer $w$ and a Taylor series $h(q)$ in powers of $q^{\frac{1}{w}}$,
such that
$$g_{m,t}(\gamma\tau) = (c\tau+d)^{\frac{1}{2}\sum\limits_{\delta|M}r_\delta}q^{p(\gamma_i)}h(q).$$
\end{lem}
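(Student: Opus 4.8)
The plan is to carry over Radu's proof of \cite[Lemma~3.4]{Radu-2009} essentially verbatim, with $\Gamma_0(N)$ replaced by $\Gamma_1(N)$ throughout; the only real change is that the leading $q$-exponent is now governed by the $(\Gamma_1(N),\Gamma_\infty)$-double coset of $\gamma$. First I would record an $m$-dissection of $g_{m,t}$: since $(q^\delta;q^\delta)_\infty=q^{-\delta/24}\eta(\delta\tau)$, the right-hand side of \eqref{radu-gf} equals $q^{\ell}\prod_{\delta|M}\eta^{r_\delta}(\delta\tau)$ with $\ell$ as in \eqref{gmt}, and extracting the progression $n\equiv t\pmod m$ by averaging over $m$-th roots of unity (and using $\ell-t=\alpha(t)/24$) gives
\[
g_{m,t}(\tau)=\frac1m\sum_{j=0}^{m-1}\omega_j\prod_{\delta|M}\eta^{r_\delta}\!\Big(\tfrac{\delta(\tau+j)}{m}\Big),
\]
where each $\omega_j$ is a root of unity; thus $g_{m,t}$ is a finite combination of shifted eta-quotients.

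Next, for $\gamma=\begin{pmatrix}a&b\\c&d\end{pmatrix}\in\Gamma$ I would transform each summand. Put $g_{\delta,j}=\gcd(\delta(a+jc),mc)$. The integer matrix $\begin{pmatrix}\delta(a+jc)&\delta(b+jd)\\ mc&md\end{pmatrix}$ has determinant $\delta m$, hence factors as $\gamma_{\delta,j}\begin{pmatrix}g_{\delta,j}&\mu_{\delta,j}\\0&\delta m/g_{\delta,j}\end{pmatrix}$ with $\gamma_{\delta,j}\in\Gamma$ and $\mu_{\delta,j}\in\mathbb{Z}$, so that $\tfrac{\delta(\gamma\tau+j)}{m}=\gamma_{\delta,j}\big(\tfrac{g_{\delta,j}^2\tau+g_{\delta,j}\mu_{\delta,j}}{\delta m}\big)$. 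Applying the Dedekind transformation law for $\eta$ on $\Gamma$ (the general form of \eqref{trans-eta-newman}) to $\gamma_{\delta,j}$, together with the cocycle property of automorphy factors, yields
\[
\eta\!\Big(\tfrac{\delta(\gamma\tau+j)}{m}\Big)=\nu_{\delta,j}\Big(\tfrac{g_{\delta,j}}{\delta}(c\tau+d)\Big)^{1/2}\eta\!\Big(\tfrac{g_{\delta,j}^2\tau+g_{\delta,j}\mu_{\delta,j}}{\delta m}\Big),
\]
with $\nu_{\delta,j}$ a root of unity; and the right-hand $\eta$, expanded at $i\infty$, equals $\zeta_{\delta,j}\,q^{g_{\delta,j}^2/(24\delta m)}$ times a power series in $q^{g_{\delta,j}^2/(\delta m)}$, $\zeta_{\delta,j}$ a root of unity. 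Multiplying over $\delta|M$ and over the factors $\eta^{r_\delta}$, the $j$-th summand of $g_{m,t}(\gamma\tau)$ becomes $(c\tau+d)^{\frac12\sum_{\delta|M}r_\delta}$ times a constant times $q^{p(\gamma,j)}$ times a power series in $q^{1/(24mM)}$, where $p(\gamma,j)=\tfrac1{24}\sum_{\delta|M}\tfrac{\gcd^2(\delta(a+jc),mc)}{\delta m}r_\delta$; this is precisely \eqref{def_p_gamma-l-1} with $\lambda=j$, the factor $\kappa$ there being irrelevant for us since $\gcd(\kappa,m)=1$ makes $\lambda\mapsto\kappa\lambda$ a bijection of $\mathbb{Z}_m$.

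Summing over $j$ and pulling out $(c\tau+d)^{\frac12\sum r_\delta}$, we see that $g_{m,t}(\gamma\tau)(c\tau+d)^{-\frac12\sum r_\delta}$ is a series in $q^{1/(24mM)}$ whose least exponent is at least $\min_{0\le j<m}p(\gamma,j)=p(\gamma)$, see \eqref{def_p_gamma-1}. It remains to replace $p(\gamma)$ by $p(\gamma_i)$, i.e.\ to prove that $p$ is constant on $(\Gamma_1(N),\Gamma_\infty)$-double cosets. Right $\Gamma_\infty$-invariance is immediate, because $p(\gamma,j)$ depends on $\gamma$ only through its first column $(a,c)$, which $\gamma\mapsto\gamma\begin{pmatrix}1&h\\0&1\end{pmatrix}$ fixes. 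Left $\Gamma_1(N)$-invariance is exactly where the conditions \nameref{con_N_1}--\nameref{con_N_10}, in particular \nameref{con_N_10}, come in, following Radu: one tracks the permutation of $\mathbb{Z}_m$ induced on $\lambda$ by left multiplication by $\gamma_N\in\Gamma_1(N)$ and uses those congruences to check that it preserves the class $t$, so that $\min_\lambda p(\gamma,\lambda)$ does not change; alternatively, for $\gamma_N\in\Gamma_1(N)^*$ one may invoke Lemma~\ref{tran_g_gamma-1}, and for general $\gamma_N\in\Gamma_1(N)$ the factorization used in the proof of Lemma~\ref{gamma1-transformation}, since multiplying $g_{m,t}$ by a bounded nonvanishing automorphy factor and a nonzero constant cannot change its leading $q$-exponent at a cusp. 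Either route gives $g_{m,t}(\gamma\tau)=(c\tau+d)^{\frac12\sum_{\delta|M}r_\delta}q^{p(\gamma_i)}h(q)$ with $h$ a Taylor series in $q^{1/w}$, $w=24mM$. The only genuinely non-routine ingredients are this left $\Gamma_1(N)$-invariance of $p$ and the consistent bookkeeping of the root-of-unity factors and fractional $q$-powers across all $\delta|M$ and $0\le j<m$; everything else is a direct transcription of Radu's argument.
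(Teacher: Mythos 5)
Your proposal is correct and follows essentially the route the paper intends: the paper omits this proof entirely, stating only that it parallels Radu's Lemma~3.4, and your reconstruction — the $m$-dissection by roots of unity, the Hermite-type factorization of $\bigl({\delta(a+jc)\ \ \delta(b+jd)\atop mc\qquad md}\bigr)$, the eta transformation law, and the reduction of $p(\gamma)$ to $p(\gamma_i)$ via constancy on $(\Gamma_1(N),\Gamma_\infty)$-double cosets — is exactly that argument, with the reindexing $j=\kappa\lambda$ correctly justified by $\gcd(\kappa,m)=1$. The only point worth flagging is that the double-coset constancy for $\Gamma_1(N)$ follows at once from Radu's statement for $\Gamma_0(N)$ (since $\Gamma_1(N)\gamma\Gamma_\infty\subseteq\Gamma_0(N)\gamma\Gamma_\infty$), so your second, automorphy-factor route, while valid, is not needed.
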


The following lemma gives a Laurent expansion
of $\phi(\gamma\tau)$ for any $\gamma\in\Gamma$.

\begin{lem}\label{phi_trans}
Let
\begin{align*}
\phi(\tau)= \prod_{\delta | N}\eta^{a_{\delta}}(\delta \tau)\,\prod_{{\delta|N\atop 0<g\leq \left\lfloor\delta/2\right\rfloor}}\eta_{\delta,g}^{a_{\delta,g}}(\tau),
\end{align*}
where $a_{\delta}$ and $a_{\delta,g}$ are integers.
For any $\gamma\in\Gamma$, there exists a positive integer $w$ and a Taylor series $h^*(q)$ in powers of $q^{\frac{1}{w}}$
such that
\begin{align*}
\phi(\gamma\tau)=(c\tau+d)^{\frac{1}{2}\sum\limits_{\delta|N}a_{\delta}}q^{p^*(\gamma)}h^*(q),
\end{align*}
where
\begin{align*}
p^*(\gamma)=\frac{1}{24}\sum_{\delta|N}\frac{\gcd^2{(\delta,c)}}{\delta}a_\delta
+\frac{1}{2}\sum_{{\delta|N\atop 0<g\leq \left\lfloor\delta/2\right\rfloor}}\frac{\gcd^2{(\delta,c)}}{\delta}P_2\left(\frac{a g}{\gcd{(\delta,c)}}\right) a_{\delta,g}.
\end{align*}
Furthermore, for any $\gamma_1\in\Gamma$ and $\gamma_2\in\Gamma_1(N)\gamma_1\Gamma_\infty$,
we have $p^*(\gamma_1)=p^*(\gamma_2)$.
\end{lem}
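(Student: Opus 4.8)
\textbf{Proof proposal for Lemma \ref{phi_trans}.}

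The plan is to decompose $\phi(\gamma\tau)$ into its eta-factors and its generalized eta-factors, apply the known transformation formulas for each under an arbitrary $\gamma\in\Gamma$, and then extract the leading exponent of $q$ in the resulting Fourier expansion at the cusp. For the ordinary eta-factors $\eta^{a_\delta}(\delta\tau)$, one writes $\delta\tau$ as $\gamma_\delta'(\delta\tau)$ exactly as in the proof of Lemma \ref{trans-F-lem}, where $\gamma_\delta'=\left({a\,\,\delta b\atop \frac{c}{\delta}\,\,d}\right)$ when $\delta\mid c$; but here $\gamma$ need not lie in $\Gamma_1(N)^*$, so $\delta\nmid c$ in general and one must instead use the general eta-transformation formula of Newman (or the classical Dedekind formula with the Dedekind sum) applied to the matrix in $\mathrm{SL}_2(\mathbb Z)$ that sends $\infty$ to the relevant cusp. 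The key point is that the $q$-order contributed by $\eta^{a_\delta}(\delta\tau)$ at the cusp determined by $c$ is $\frac{1}{24}\frac{\gcd^2(\delta,c)}{\delta}a_\delta$; this is the standard formula for the order of vanishing of an eta-quotient at a cusp, and it accounts for the first sum in $p^*(\gamma)$.

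For the generalized eta-factors $\eta_{\delta,g}^{a_{\delta,g}}(\tau)$, the plan is to invoke the transformation law for $\eta_{\delta,g}$ under a general element of $\Gamma$ — this is precisely what Schoeneberg \cite{Schoeneberg-1974} and Robins \cite{Robins-1994} provide, using the notation $\eta_{g,h}^{(s)}$ recalled in \eqref{Schoeneberg's notation} together with the identity $\eta_{\delta,g}(\tau)=\eta_{g,0}^{(s)}(\delta\tau)$ from \eqref{S_s}. Under $\gamma$, the function $\eta_{g,0}^{(s)}(\delta\tau)$ transforms into a root of unity times a power of $(c\tau+d)^{1/2}$ times some $\eta_{g',h'}^{(s)}$ evaluated at $\delta\tau$; tracking the leading term of that function's expansion in powers of $e^{2\pi i\tau/w}$ shows that the $q$-order picked up is $\frac{1}{2}\frac{\gcd^2(\delta,c)}{\delta}P_2\!\left(\frac{ag}{\gcd(\delta,c)}\right)$, since the leading exponent of $\eta_{g,h}^{(s)}(\tau)$ is governed by $\frac12 P_2(g/\delta)$ and the argument of $P_2$ gets twisted by $a$ modulo $\gcd(\delta,c)$ when one rescales to a local variable. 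Multiplying the contributions of all factors and combining the half-integer weights gives the claimed weight $\frac{1}{2}\sum_{\delta\mid N}a_\delta$ (the generalized eta-functions are weight $0$), the exponent $p^*(\gamma)$, and a Taylor series $h^*(q)$ in powers of $q^{1/w}$ for a suitable common denominator $w$; one should note $h^*(0)\neq 0$, so $p^*(\gamma)$ is genuinely the order.

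For the final assertion — invariance of $p^*$ on $(\Gamma_1(N),\Gamma_\infty)$-double cosets — I would argue that right-multiplication by $\gamma_\infty=\pm\left({1\,\,b\atop 0\,\,1}\right)\in\Gamma_\infty$ does not change the lower-left entry $c$ (up to sign), and multiplying $\phi(\gamma\tau)$ on the right by $\gamma_\infty$ only shifts $\tau$ by an integer, which multiplies $h^*$ by a root of unity and leaves the $q$-order fixed; hence $p^*(\gamma)=p^*(\gamma\gamma_\infty)$. For left-multiplication by $\gamma_N\in\Gamma_1(N)$, one observes that $p^*(\gamma)$ depends on $\gamma$ only through $c\bmod N$ and $a\bmod \gcd(\delta,c)$ for the various $\delta\mid N$: the quantity $\gcd^2(\delta,c)/\delta$ plainly depends only on $\gcd(\delta,c)$, and under $\gamma\mapsto\gamma_N\gamma$ with $\gamma_N\equiv I\pmod N$ one checks that $\gcd(\delta,c)$ is unchanged and $ag$ is unchanged modulo $\gcd(\delta,c)$ (using $\delta\mid N$), so each $P_2$-term is unchanged because $P_2$ is periodic with period $1$. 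The main obstacle I anticipate is the careful bookkeeping in the generalized-eta case: correctly identifying which $\eta_{g',h'}^{(s)}$ appears after the $\Gamma$-action, pinning down the precise twist of the $P_2$-argument by $a$ modulo $\gcd(\delta,c)$, and verifying that the resulting leading coefficient is nonzero so that $p^*(\gamma)$ is indeed the order of vanishing — the root-of-unity prefactors are irrelevant for the order but must be handled well enough to see they do not conspire to cancel the leading term.
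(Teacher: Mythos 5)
Your proposal follows essentially the same route as the paper's proof: reduce each factor to an expansion at infinity via a factorization $\left({\delta a\,\,\delta b\atop \,c\,\,\,\,\,d}\right)=\gamma_\delta T_\delta$ with $\gamma_\delta\in\Gamma$ and $T_\delta$ upper triangular of determinant $\delta$, apply the Dedekind/Newman law to the factors $\eta^{a_\delta}(\delta\tau)$ and Schoeneberg's law (through $\eta_{\delta,g}(\tau)=\eta_{g,0}^{(s)}(\delta\tau)$, with $g'=ag+ch$ producing exactly the twist $P_2\bigl(ag/\gcd(\delta,c)\bigr)$ after rescaling by $T_\delta$) to the generalized factors, and establish the double-coset invariance from $\gcd(\delta,c_2)=\gcd(\delta,c_1)$ and $a_2g\equiv\pm a_1g\pmod{\gcd(\delta,c_1)}$ together with the evenness and periodicity of $P_2$. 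This is correct and matches the paper's argument in all essentials.
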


\begin{proof}
For any $\gamma=\left(a\,\,b\atop c\,\,d\right)\in \Gamma$,
we have
\begin{align*}
\phi(\gamma \tau)=\prod_{\delta | N}\eta^{a_{\delta}}(\delta \gamma\tau)\prod_{{\delta|N\atop 0<g\leq \left\lfloor\delta/2\right\rfloor}}\eta_{\delta,g}^{a_{\delta,g}}(\gamma\tau).
\end{align*}
It follows from \eqref{S_s} that
\begin{align}\label{lem-eq1}
\phi(\gamma \tau)=\prod_{\delta | N}\eta^{a_{\delta}}(\delta \gamma\tau)\prod_{{\delta|N\atop 0<g\leq \left\lfloor\delta/2\right\rfloor}}{\eta_{g,0}^{(s)}}^{a_{\delta,g}}(\delta\gamma\tau).
\end{align}
Since $\gcd(a,c)=1$, for any $\delta|N$,
there exist integers $x_\delta$ and $y_\delta$ such that \[ \delta a x_\delta + c y_\delta=\gcd(\delta a,c)=\gcd(\delta,c),\]
and hence
\begin{align}\label{mat_dec}
\left(\delta a\,\,\,\delta b\atop c\,\,\,\,\,\,\,d\right)=\left(\frac{\delta a}{\gcd(\delta,c)}\,\,-y_\delta\atop \frac{c}{\gcd(\delta,c)}\,\,\,\,\,\,x_\delta\,\,\,\right)
\left(\gcd(\delta,c)\,\,\,\,\delta b x_\delta + dy_\delta
\atop \,\,\,\,\,0\,\,\,\,\,\,\,\,\,\,\,\,\,\,\,\,\,\,\,\frac{\delta}{\gcd(\delta,c)}\right).
\end{align}
Set
\[\gamma_\delta=\left(\frac{\delta a}{\gcd(\delta,c)}\,\,\,\,-y_\delta\atop \frac{c}{\gcd(\delta,c)}\,\,\,\,\,\,\,\,x_\delta\,\,\right)
\qquad\text{and}\qquad
T_\delta= \left(\gcd(\delta,c)\,\,\,\,\delta b x_\delta + dy_\delta
\atop \,\,\,\,\,0\,\,\,\,\,\,\,\,\,\,\,\,\,\,\,\,\,\,\,\frac{\delta}{\gcd(\delta,c)}\right).\]
Note that $\gamma_\delta\in\Gamma$.
Combining \eqref{lem-eq1} and \eqref{mat_dec}, we deduce that
\begin{align}\label{F_tran}
\phi(\gamma \tau)=
\prod_{\delta | N}\eta^{a_{\delta}}(\gamma_\delta T_\delta \tau)\prod_{{\delta|N\atop 0<g\leq \left\lfloor\delta/2\right\rfloor}}{\eta_{g,0}^{(s)}}^{a_{\delta,g}}(\gamma_\delta T_\delta \tau).
\end{align}
By the transformation law for $\eta(\tau)$ under $\Gamma$ \cite[p.~145]{Rademacher-1973}, namely,
there exists a map $\varepsilon'\colon \Gamma\rightarrow\mathbb{C}$ such that
for any $\gamma\in\Gamma$,
\begin{align*}
\eta(\gamma \tau)
= \varepsilon'(\gamma)(c\tau+d)^{\frac{1}{2}} \eta(\tau),
\end{align*}
and the transformation formula for $\eta_{g,h}^{(s)}(\tau)$ under $\Gamma$
in \cite[p.~199~(30)]{Schoeneberg-1974}, namely, when $0<g<\delta$,
there exists a map $\epsilon_1\colon \Gamma\rightarrow \mathbb{C}$
such that for any $\gamma\in\Gamma$,
\begin{align*}
\eta_{g,h}^{(s)}(\gamma\tau) = \epsilon_1(\gamma)\, \eta_{g',h'}^{(s)}(\tau),
\end{align*}
where $g' = ag+ch$, $h'=bg+dh$,
it follows from  \eqref{F_tran} that there is a map $\varepsilon\colon \Gamma\rightarrow \mathbb{C}$ such that for any $\gamma\in\Gamma$,
\begin{align}\label{phi_trans_1}
\phi(\gamma \tau)=\varepsilon(\gamma)(c\tau+d)^{\frac{1}{2}\sum\limits_{\delta|N}a_\delta}
\prod_{\delta|N}\eta^{a_{\delta}}(T_\delta \tau)\prod_{{\delta|N\atop 0<g\leq \left\lfloor\delta/2\right\rfloor}}{\eta_{\frac{\delta a}{\gcd(\delta,c)}g,-y_\delta g}^{(s)}}^{a_{\delta,g}}(T_\delta \tau).
\end{align}
Substituting the $q$-expansions of the eta-function and the generalized eta-function into \eqref{phi_trans_1}, we see that there exists a positive integer $w$ and a Taylor series $h^*(q)$ in powers of $q^{\frac{1}{w}}$ such that
\begin{align*}
\phi(\gamma \tau)=(c\tau+d)^{\frac{1}{2}\sum\limits_{\delta|N}a_\delta}
q^{p^*(\gamma)}h^*(q).
\end{align*}

Next we aim to show that
$p^*(\gamma_1)=p^*(\gamma_2)$ for any $\gamma_1\in\Gamma$ and $\gamma_2\in\Gamma_1(N)\gamma_1\Gamma_\infty$.
Under the assumption that $\gamma_2\in\Gamma_1(N)\gamma_1\Gamma_\infty$,
there exist $\gamma_3\in\Gamma_1(N)$ and $\gamma_4\in \Gamma_\infty$
such that
\begin{align}\label{relation_gamma}
\gamma_2=\gamma_3\gamma_1\gamma_4.
\end{align}
Write
\[\gamma_1=\left(a_1\,\,b_1\atop c_1\,\,d_1\right),\quad\gamma_2=\left(a_2\,\,b_2\atop c_2\,\,d_2\right), \quad\gamma_3=\left(a_3\,\,b_3\atop c_3\,\,d_3\right), \quad\gamma_4=\left(\pm1\;\; b_4\atop \;\;0\;\;\pm1\right).\]
Owing to \eqref{relation_gamma}, we find that
\begin{align}\label{a_2_relation}
a_2=\pm (a_1a_3+b_3c_1)
\end{align}
and
\begin{align}\label{c_2_relation}
c_2=\pm (a_1c_3+c_1d_3).
\end{align}
For any $\delta|N$, since $\gamma_3\in\Gamma_1(N)$, we see that
$a_3\equiv 1\pmod{\delta}$,
$\delta|c_3$ and $\gcd(\delta, d_3) = 1$.
Using \eqref{a_2_relation}, it can be verified that
\begin{align}\label{a_2_equiv}
a_2g \equiv \pm a_1 g \pmod{\gcd{(\delta,c_1)}}.
\end{align}
In view of \eqref{c_2_relation}, we obtain that
\begin{align}\label{gcd_c_2}
\gcd(\delta,c_2)=\gcd(\delta,c_1).
\end{align}
Combining \eqref{a_2_equiv} and \eqref{gcd_c_2}, we arrive at
\begin{align}\label{P_2_equal}
P_2\left(\frac{a_1g}{\gcd(\delta,c_1)}\right)
=P_2\left(\frac{a_2g}{\gcd(\delta,c_2)}\right),
\end{align}
here we have used the fact that $P_2(-\alpha)=P_2(\alpha)$ for any $\alpha\in\mathbb{R}$.
Combining \eqref{gcd_c_2} and \eqref{P_2_equal}, we conclude that $p^*(\gamma_1)=p^*(\gamma_2)$, as claimed.
\end{proof}

We are now ready to complete the proof of Theorem \ref{con_F_modular_function}.

\begin{proof}[Proof of Theorem \ref{con_F_modular_function}]
Assume that
\begin{align}\label{F_def-1}
F(\tau)=\prod_{\delta | N}\eta^{a_{\delta}}(\delta \tau)\,\prod_{{\delta|N\atop 0<g\leq \left\lfloor\delta/2\right\rfloor}}\eta_{\delta,g}^{a_{\delta,g}}(\tau)\, g_{m,t}(\tau)
\end{align}
is a modular function with respect to $\Gamma_1(N)$,
where $a_{\delta}$ and $a_{\delta,g}$ are integers.
We proceed to show that the conditions (1)--(4) are fulfilled by the integers $a_{\delta}$ and $a_{\delta,g}$.

Since $\Gamma_1(N)^*\subseteq \Gamma_1(N)$ and $F(\tau)$ is a modular function for $\Gamma_1(N)$,
 for any $\gamma\in\Gamma_1(N)^*$, we have
\begin{align}\label{trans_F-1}
F(\gamma\tau)=F(\tau).
\end{align}
To compute $F(\gamma\tau)$, we need
the   transformation formula for $F(\tau)$ under $\Gamma_1(N)^*$
as given in Lemma \ref{trans-F-lem}, that is, for any $\gamma\in\Gamma_1(N)^*$,
\begin{align}\label{F-trans-law-2}
F(\gamma\tau)=
\prod_{\delta |N}L\left(\frac{c}{\delta},a\right) ^{a_\delta}
\prod\limits_{\delta|M}L&(m\delta c,a)^{|r_\delta|}
e^{\pi i \left(\nu(\gamma)+\xi(\gamma)\right)}\notag\\[2pt]
&\times(c\tau+d)^{\frac{1}{2}\big(\sum\limits_{\delta|N}a_\delta+
\sum\limits_{\delta|M}r_\delta\big)}
F{(\tau)},
\end{align}
where $\nu(\gamma)$ and $\xi(\gamma)$ are defined in \eqref{nu-def} and \eqref{xi-def}.
Combining \eqref{trans_F-1} and \eqref{F-trans-law-2},
we see that
\begin{align*}
\sum\limits_{\delta|N}a_\delta+\sum\limits_{\delta|M}r_\delta=0,
\end{align*}
thus (1) is satisfied.
Consequently, $\xi(\gamma)$ reduces to
\begin{align*}
&-ac\bigg(
\sum\limits_{\delta|N}\frac{a_\delta}{12\delta}
                  +\sum\limits_{{\delta|N\atop 0<g\leq \left\lfloor{\delta}/{2}\right\rfloor}}
                  \rule{-8pt}{0pt}\frac{a_{\delta,g}}{6\delta}
                  + \sum\limits_{\delta|M}\frac{mr_\delta}{12\delta}
\bigg)\\[6pt]
&\quad  +ab \bigg(
\sum\limits_{\delta|N}\frac{\delta a_\delta}{12}
+\sum\limits_{{\delta|N\atop 0<g\leq \left\lfloor{\delta}/{2}\right\rfloor}}
\rule{-8pt}{0pt}\delta P_2\left(\frac{g}{\delta}\right){a_{\delta,g}}
+\sum\limits_{\delta|M}\frac{m\delta r_\delta}{12}+\frac{(m^2-1)\alpha(t)}{12m}
\bigg).
\end{align*}
To prove (2), consider the matrix $\gamma=\left({1\,\,\,\, 0\atop N\,\,\,1\,}\right)\in\Gamma_1(N)^*$. In this case,
\eqref{F-trans-law-2} becomes
\begin{align}\label{F_trans_spec-1}
  F(\gamma\tau)=e^{-\pi iN\Big(
\sum\limits_{\delta|N}\frac{a_\delta}{12\delta}
                  +\sum\limits_{\delta|N}\sum\limits_{g=1}^{ \left\lfloor{\delta}/{2}\right\rfloor}\frac{a_{\delta,g}}{6\delta}
                  + \sum\limits_{\delta|M}\frac{mr_\delta}{12\delta}
\Big) }  F(\tau).
\end{align}
Hence (2) follows from \eqref{trans_F-1} and \eqref{F_trans_spec-1}.
Setting $\gamma=\left({1\quad\,1\,\,\,\atop N\,\,N+1}\right)\in\Gamma_1(N)^*$,  \eqref{F-trans-law-2} becomes
\begin{align*}
  F(\gamma\tau)
  =e^{\pi i \Big(
\sum\limits_{\delta|N}\frac{\delta a_\delta}{12}
+\sum\limits_{\delta|N}
\sum\limits_{g=1}^{\lfloor{\delta}/{2}\rfloor}
\delta P_2\left(\frac{g}{\delta}\right){a_{\delta,g}}
+\sum\limits_{\delta|M}\frac{m\delta r_\delta}{12}
+\frac{(m^2-1)\alpha(t)}{12m}
\Big)} F(\tau),
\end{align*}
which, together with \eqref{trans_F-1}, implies (3).
Using the conditions (1)--(3),
it can be checked that $\xi(\gamma)\equiv 0\pmod{2}$   for any $\gamma\in\Gamma_1(N)^*$.
It follows that
\[e^{\pi i\xi(\gamma)} = 1,\]
and so \eqref{F-trans-law-2} reduces to
\begin{align}\label{F-trans-law-2-1}
F(\gamma\tau)
&=\prod_{\delta |N}L\left(\frac{c}{\delta},a\right) ^{a_\delta}
\prod\limits_{\delta|M}L(m\delta c,a)^{|r_\delta|}
e^{\pi i \nu(\gamma)}(c\tau+d)^{\frac{1}{2}\big(\sum\limits_{\delta|N}a_\delta+
\sum\limits_{\delta|M}r_\delta\big)}
F{(\tau)}.
\end{align}
By the definition of $L$, we find that for any $\delta|N$,
\[L\Big(\frac{c}{\delta}, a\Big)=L(\delta c,a) = \left(\frac{\delta|c|}{|a|}\right),\]
and for any $\delta|M$,
\[L(m\delta c,a) = \left(\frac{m\delta|c|}{|a|}\right).\]
Hence \eqref{F-trans-law-2-1} is equivalent to
\begin{align}\label{F-trans-law-3-1}
F(\gamma\tau)
=\prod_{\delta |N}\left(\frac{\delta|c|}{|a|}\right)^{|a_\delta|}
\,
\prod\limits_{\delta|M}\left(\frac{m\delta|c|}{|a|}\right)^{|r_\delta|}
\,
e^{\pi i \nu(\gamma)}\, (c\tau+d)^{\frac{1}{2}\big(\sum\limits_{\delta|N}a_\delta+
\sum\limits_{\delta|M}r_\delta\big)}F(\tau).
\end{align}
In view of the condition (1), it is easily verified  that
\begin{align}\label{exp_tau_0-1}
(c\tau+d)^{\frac{1}{2}\big(\sum\limits_{\delta|N}a_\delta+
\sum\limits_{\delta|M}r_\delta\big)} = 1
\end{align}
and
\begin{align}\label{Jacobi_1-1}
\prod_{\delta |N}\left(\frac{|c|}{|a|}\right)^{|a_\delta|}
\,
\prod\limits_{\delta|M}\left(\frac{|c|}{|a|}\right)^{|r_\delta|} = 1.
\end{align}
Substituting \eqref{exp_tau_0-1} and \eqref{Jacobi_1-1} into \eqref{F-trans-law-3-1}
yields
\begin{align}\label{F-trans-law-4-1}
 F(\gamma\tau)=\prod_{\delta |N}\left(\frac{\delta}{|a|}\right)^{|a_\delta|}
\,
\prod\limits_{\delta|M}\left(\frac{m\delta}{|a|}\right)^{|r_\delta|}
\,
e^{\pi i \nu(\gamma)}\, F(\tau).
\end{align}
Comparing \eqref{trans_F-1} with \eqref{F-trans-law-4-1}, we deduce that
\begin{align}\label{thm-cond4-1}
\prod_{\delta |N}\left(\frac{\delta}{|a|}\right)^{|a_\delta|}
\,
\prod\limits_{\delta|M}\left(\frac{m\delta}{|a|}\right)^{|r_\delta|}
\,
e^{\pi i \nu(\gamma)}=1
\end{align}
for all integers $a$ with $\gcd{(a,6)}=1$ and $a\equiv 1\pmod N$.
Invoking the interpretation of the Jacobi symbol,
 we conclude that
\eqref{thm-cond4-1} holds for all integers $0<a<12 N$ with $\gcd{(a,6)}=1$ and $a\equiv 1\pmod N$.
This confirms (4).

Conversely, assume that the integers $a_\delta$, $a_{\delta, g}$
$(\delta|N, 0<g\le \lfloor\delta/2\rfloor)$ satisfy the
conditions
(1)--(4).
We proceed to show that
\begin{align*}
F(\tau)=\prod_{\delta | N}\eta^{a_{\delta}}(\delta \tau)\,\prod_{{\delta|N\atop 0<g\leq \left\lfloor\delta/2\right\rfloor}}\eta_{\delta,g}^{a_{\delta,g}}(\tau)\, g_{m,t}(\tau)
\end{align*}
is a modular function for $\Gamma_1(N)$.
It is clear that $F(\tau)$ is holomorphic on $\mathbb{H}$.

Based on the conditions (1)--(3),
it follows from Lemma \ref{trans-F-lem} that the transformation formula
\eqref{F-trans-law-4-1} for $F(\tau)$ holds for any $\gamma\in\Gamma_1(N)^*$.
Given the condition (4),
we see that
\eqref{thm-cond4-1} holds for all integers
$a$ with $\gcd{(a,6)}=1$ and $a\equiv 1\pmod N$. Combining \eqref{F-trans-law-4-1} and \eqref{thm-cond4-1}, we find that
for any $\gamma\in\Gamma_1(N)^*$,
\[F(\gamma\tau)=F(\tau).\]
In view of Lemma \ref{gamma1-transformation},
we conclude that $F(\gamma\tau)=F(\tau)$ for any $\gamma\in\Gamma_1(N)$.

It remains to show that for any $\gamma\in\Gamma$,
$F(\gamma\tau)$ has a Laurent expansion with a finite principal part
in powers of $q^{\frac{1}{N}}$.
Let $\gamma\in\Gamma$ and $R = \{\gamma_1, \gamma_2, \ldots, \gamma_\epsilon\}$ be a complete set of representatives of the double cosets $\Gamma_1(N)\backslash\Gamma/\Gamma_\infty$.
By the decomposition of $\Gamma$ in \eqref{double_decomp},
there exist an integer $1\leq i\leq \epsilon$ and matrices
$\gamma_N\in\Gamma_1(N)$, $\gamma_\infty\in\Gamma_\infty$ such that $\gamma=\gamma_N \gamma_i \gamma_\infty$.
By Lemma \ref{thm-definition-pgamma-1} and Lemma \ref{phi_trans},
there exist a positive integer $w$ and Taylor series $h(q)$ and $h^*(q)$ in powers of $q^{\frac{1}{w}}$ such that
\begin{align}\label{F_expansion-1}
F(\gamma\tau)=(c\tau+d)^{\frac{1}{2}\big(\sum\limits_{\delta|N}a_\delta
+\sum\limits_{\delta|M}r_\delta\big)}\, q^{p(\gamma_i)+p^*(\gamma_i)}\, h(q)\, h^*(q).
\end{align}
In view of the condition (1),   \eqref{F_expansion-1} reduces to
\begin{align}\label{F_Laurent-1}
  F(\gamma\tau)=q^{p(\gamma_i)+p^*(\gamma_i)}\, h(q)\, h^*(q),
\end{align}
which implies that there exists a positive integer $k$ such that $F(\gamma\tau)$ has the Laurent expansion with a finite principal part in powers of $q^{\frac{1}{k}}$.
Since we have shown that $F(\tau)$ is invariant under $\Gamma_1(N)$,
by Lemma 1.14 in \cite{Stein-Modular},
we obtain that for any $\gamma\in \Gamma$,
$F(\gamma\tau)$ is invariant under $\gamma^{-1}\Gamma_1(N)\gamma$.
Notice that
$\left({1\,\,N\atop 0\,\,\,1}\right)\in \gamma^{-1}\Gamma_1(N)\gamma$.
So $F(\gamma\tau)$ has period $N$, namely,
\[F(\gamma(\tau+N)) = F(\gamma\tau).\]
Thus $F(\gamma\tau)$ has a Laurent expansion in powers of $q^{\frac{1}{N}}$.
By \eqref{F_Laurent-1}, we see that this Laurent expansion
has at most finitely many negative terms.
So we reach the assertion that $F(\tau)$ is a modular function for $\Gamma_1(N)$.
\end{proof}

Given a generating function of $a(n)$ as defined in \eqref{radu-gf} and integers $m$ and $t$,  we can find an integer $N$ satisfying the conditions \nameref{con_N_1}--\nameref{con_N_10}.
If we are lucky, we may use Theorem \ref{con_F_modular_function} to find
integers $a_\delta$, $a_{\delta, g}$ $(\delta|N, 0<g\le \lfloor\delta/2\rfloor)$ satisfying the
conditions (1)--(4), which lead to a generalized eta-quotient
\[\phi(\tau)=\prod_{\delta | N}\eta^{a_{\delta}}(\delta \tau)\,\prod_{{\delta|N\atop 0<g\leq \left\lfloor\delta/2\right\rfloor}}\eta_{\delta,g}^{a_{\delta,g}}(\tau)\]
such  that
\begin{align}\label{F-modular-function}
  F(\tau)=\phi(\tau)\, g_{m,t}(\tau)
\end{align}
is a modular function.
It should be noted that such a modular function $F(\tau)$ may be not unique.
To derive  a Ramanujan-type identity for $a(mn+t)$,
we aim to express $F(\tau)$ as a linear combination of generalized eta-quotients over $\mathbb{Q}$.
To this end,
we first investigate the behavior of $F(\tau)$ at each cusp of $\Gamma_1(N)$.
Let us recall some terminology of modular functions, see, for example \cite{Diamond-Shurman-2005, Stein-Modular}.
For  $\gamma = \left({a\,\,b\atop c\,\,d}\right) \in \Gamma$,
the width $w_\gamma$ of $\frac{a}{c}$ relative to $\Gamma_1(N)$
is the minimal positive integer $h$ such that
\[\left({1\,\,h\atop 0\,\,1}\right)\in\gamma^{-1}\Gamma_1(N)\gamma.\]
Let $f(\tau)$ be a modular function for $\Gamma_1(N)$.
It is known that
$f(\gamma\tau)$ is invariant under $\gamma^{-1}\Gamma_1(N)\gamma$, see   \cite[Lemma 1.14]{Stein-Modular}.
So $f(\gamma\tau)$ has period $w_\gamma$, which implies that
$f(\gamma\tau)$ has a Laurent expansion in powers of $q^{1/w_\gamma}$.
Since $f(\tau)$ is a modular function,
this Laurent expansion has at most finitely many negative terms.
Write
\begin{align}\label{Lauren-expansion}
  f(\gamma\tau)=\sum\limits_{n=-\infty}^\infty b_nq^{n/w_\gamma},
\end{align}
where $b_n=0$ for almost all negative integers $n$.
Let $n_\gamma$ be the smallest integer such that $b_{n_\gamma}\neq 0$.
We call $n_\gamma$ the $\gamma$-order of $f$ at $\frac{a}{c}$, denoted by $\mathrm{ord}_\gamma(f)$.
Denote the smallest exponent of $q$
on the right hand side of \eqref{Lauren-expansion} by $v_\gamma$, so that
\begin{align}\label{v-eq}
\mathrm{ord}_\gamma(f) = v_\gamma w_\gamma.
\end{align}
Furthermore,
the order of $f$ at the cusp $ \frac{a}{c}\in \mathbb{Q}\cup \{\infty\}$ is defined by
\begin{equation}\label{ord_cusp_def}
\mathrm{ord}_{a/c}(f)=\mathrm{ord}_\gamma(f)
\end{equation}
for some $\gamma\in\Gamma$ such that $\gamma\infty=\frac{a}{c}$.
It is  known that $\mathrm{ord}_{a/c}(f)$ is well-defined, see \cite[p.~72]{Diamond-Shurman-2005}.

The following theorem gives
estimates of the orders of $F(\tau)$ at cusps of $\Gamma_1(N)$.

\begin{thm}\label{order_F-1}
For a given partition function $a(n)$ as defined by \eqref{radu-gf},
and for given integers $m$ and $t$,
let
\begin{align*}
  F(\tau)=\phi(\tau)\,g_{m,t}(\tau),
\end{align*}
where
\[\phi(\tau)=\prod_{\delta | N}\eta^{a_{\delta}}(\delta \tau)\prod_{{\delta|N\atop 0<g\leq \left\lfloor\delta/2\right\rfloor}}\eta_{\delta,g}^{a_{\delta,g}}(\tau),\] $a_{\delta}$ and $a_{\delta,g}$ are integers.
Assume that $F(\tau)$ is a modular function for $\Gamma_1(N)$.
Let $\{s_1, s_2,\ldots,s_\epsilon\}$ be a complete set of inequivalent cusps of $\Gamma_1(N)$,
and for each $1\leq i \leq \epsilon$,
let $\alpha_i\in\Gamma$ be such that $\alpha_i\infty = s_i$.
Then
\begin{align}\label{ord_cusp-1}
\mathrm{ord}_{s_i}(F(\tau))\ge w_{\alpha_i}\,(p(\alpha_i)+p^*(\alpha_i)),
\end{align}
where $p(\gamma)$ is given by \eqref{def_p_gamma-1} and $p^*(\gamma)$ is defined in Lemma \ref{phi_trans}.
\end{thm}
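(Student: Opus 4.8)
The plan is to reduce the statement to the two local Laurent-expansion lemmas already established, namely Lemma \ref{thm-definition-pgamma-1} for $g_{m,t}(\gamma\tau)$ and Lemma \ref{phi_trans} for $\phi(\gamma\tau)$. Fix a cusp $s_i$ and a matrix $\alpha_i\in\Gamma$ with $\alpha_i\infty=s_i$. Write $\alpha_i=\left({a\,\,b\atop c\,\,d}\right)$. By definition \eqref{ord_cusp_def}, $\mathrm{ord}_{s_i}(F)=\mathrm{ord}_{\alpha_i}(F)$, so it suffices to understand the Laurent expansion of $F(\alpha_i\tau)$ in powers of $q^{1/w_{\alpha_i}}$ and bound its smallest exponent from below.

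First I would combine the two lemmas. Since $\alpha_i\in\Gamma$, Lemma \ref{thm-definition-pgamma-1} gives an integer $w_1$ and a Taylor series $h(q)$ in powers of $q^{1/w_1}$ with
\begin{align*}
g_{m,t}(\alpha_i\tau)=(c\tau+d)^{\frac12\sum_{\delta|M}r_\delta}\,q^{p(\gamma_j)}\,h(q),
\end{align*}
where $\gamma_j$ is the representative of the double coset $\Gamma_1(N)\gamma_j\Gamma_\infty$ containing $\alpha_i$; but by the final assertion of Lemma \ref{phi_trans} together with the analogous double-coset invariance of $p$ (which is part of Lemma \ref{thm-definition-pgamma-1}), we may replace $p(\gamma_j)$ by $p(\alpha_i)$. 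Similarly Lemma \ref{phi_trans} gives an integer $w_2$ and a Taylor series $h^*(q)$ in powers of $q^{1/w_2}$ with $\phi(\alpha_i\tau)=(c\tau+d)^{\frac12\sum_{\delta|N}a_\delta}q^{p^*(\alpha_i)}h^*(q)$. Multiplying, and invoking condition (1) of Theorem \ref{con_F_modular_function} (which holds since $F$ is assumed modular for $\Gamma_1(N)$) to kill the automorphy factor $(c\tau+d)^{\frac12(\sum a_\delta+\sum r_\delta)}=1$, I obtain
\begin{align*}
F(\alpha_i\tau)=q^{\,p(\alpha_i)+p^*(\alpha_i)}\,h(q)\,h^*(q),
\end{align*}
a Laurent series in powers of $q^{1/w}$ for some common denominator $w$.

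Next I would pass from this expansion to the order relative to $\Gamma_1(N)$. Since $F$ is a modular function for $\Gamma_1(N)$, Lemma 1.14 of \cite{Stein-Modular} shows $F(\alpha_i\tau)$ is invariant under $\alpha_i^{-1}\Gamma_1(N)\alpha_i$, hence has period $w_{\alpha_i}$ and so in fact admits a Laurent expansion purely in powers of $q^{1/w_{\alpha_i}}$; write it as $\sum_n b_n q^{n/w_{\alpha_i}}$. Comparing with the expansion above, the smallest exponent of $q$ occurring is at least $p(\alpha_i)+p^*(\alpha_i)$, because $h$ and $h^*$ are Taylor series with no negative powers. Thus the smallest $v_{\alpha_i}$ with $b_{v_{\alpha_i}w_{\alpha_i}}\ne0$ satisfies $v_{\alpha_i}\ge p(\alpha_i)+p^*(\alpha_i)$, and by \eqref{v-eq},
\begin{align*}
\mathrm{ord}_{s_i}(F)=\mathrm{ord}_{\alpha_i}(F)=v_{\alpha_i}w_{\alpha_i}\ge w_{\alpha_i}\bigl(p(\alpha_i)+p^*(\alpha_i)\bigr),
\end{align*}
which is \eqref{ord_cusp-1}.

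The only genuinely delicate point is the bookkeeping that makes the displayed factorization of $F(\alpha_i\tau)$ legitimate: one must check that the rational exponents $p(\alpha_i)$ and $p^*(\alpha_i)$ are the correct ``$q$-orders'' after the two separate changes of variable are merged, i.e. that the Taylor factors $h(q)$ and $h^*(q)$ really begin with a nonzero constant term so that no further cancellation can lower the leading exponent below $p(\alpha_i)+p^*(\alpha_i)$. This is exactly why the bound is an inequality rather than an equality — multiplicative cancellation between $h$ and $h^*$, or the fact that $p(\alpha_i)$ is defined as a minimum over $\lambda\in\mathbb{Z}_m$ in \eqref{def_p_gamma-1} and the minimizing $\lambda$ need not be realized in the product, can push the actual order strictly higher. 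Everything else is a direct assembly of Lemma \ref{thm-definition-pgamma-1}, Lemma \ref{phi_trans}, condition (1) of Theorem \ref{con_F_modular_function}, and the definitions \eqref{ord_cusp_def} and \eqref{v-eq}; in fact the computation is essentially the one already carried out at the end of the proof of Theorem \ref{con_F_modular_function}, specialized to $\gamma=\alpha_i$.
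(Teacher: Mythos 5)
Your proposal is correct and follows essentially the same route as the paper: combine Lemma \ref{thm-definition-pgamma-1} and Lemma \ref{phi_trans} at $\gamma=\alpha_i$, kill the automorphy factor via condition (1) of Theorem \ref{con_F_modular_function}, and read off the bound from \eqref{v-eq} and \eqref{ord_cusp_def}. The only cosmetic difference is that the paper sidesteps your double-coset-invariance step by observing up front that the $\alpha_i$ themselves form a complete set of representatives of $\Gamma_1(N)\backslash\Gamma/\Gamma_\infty$, so Lemma \ref{thm-definition-pgamma-1} applies with $\gamma_i=\alpha_i$ directly.
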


To compute the right hand side of \eqref{ord_cusp-1}, we need the following formula due to Cho, Koo and Park \cite{Cho-Koo-Park-2009}:
\begin{align}\label{w-calcu}
  w_\gamma=
\begin{cases}
1, &\text{if}\  N = 4\ \text{and}\ \gcd(c,4) =2, \\
\frac{N}{\gcd(c, N)}, & \text{otherwise},
  \end{cases}
\end{align}
where $\gamma = \left({a\,\,b\atop c\,\,d}\right) \in \Gamma$.
For example, consider  the modular function
\begin{align*}
F(\tau)
=q^{\frac{2}{5}}
\frac{\eta(10\tau)\eta_{10,5}^9(\tau)}{\eta_{10,4}^8(\tau)}
  \sum_{n=0}^{\infty}\overline{p}(5n+2)q^n
\end{align*}
for $\Gamma_1(10)$  as given in \eqref{Ftau-Q31}.
A complete set $\mathcal{S}(N)$ of inequivalent cusps of $\Gamma_1(N)$ has
been found in \cite[Corollary 4]{Cho-Koo-Park-2009}.
In particular, for $N=10$, we have
\begin{align}\label{S_10}
  \mathcal{S}(10)=\left\{0,\  \frac{1}{5},\ \frac{1}{4},\ \frac{3}{10}, \ \frac{1}{3},\  \frac{3}{5},\ \frac{1}{2},\  \infty
\right\}.
\end{align}
Employing Theorem \ref{order_F-1},
we obtain the following lower bounds of the orders of $F(\tau)$ at cusps of $\Gamma_1(10)$:
\begin{eqnarray*}
\begin{split}
  &\mathrm{ord}_0(F(\tau)) \geq -3,\  \mathrm{ord}_{1/5}(F(\tau)) \geq \frac{19}{5},\  \mathrm{ord}_{1/4}(F(\tau)) \geq -2,\ \\[6pt]
   &\mathrm{ord}_{3/10}(F(\tau)) \geq -\frac{18}{5},\
\mathrm{ord}_{1/3}(F(\tau)) \geq -3,\ \mathrm{ord}_{3/5}(F(\tau)) \geq \frac{27}{5},\ \\[6pt] &\mathrm{ord}_{1/2}(F(\tau)) \geq -2,\
\mathrm{ord}_{\infty}(F(\tau)) \geq -\frac{2}{5}.
\end{split}
\end{eqnarray*}
Notice that
$F(\tau)$ may have poles at some cusps not equivalent to infinity.

We are now ready to prove Theorem \ref{order_F-1}.

\begin{proof}[Proof of Theorem \ref{order_F-1}]
It is known that there exists a bijection from the set of all inequivalent cusps of
$\Gamma_1(N)$ to the double coset space $\Gamma_1(N)\backslash\Gamma/\Gamma_\infty$,
as given by
$$\Gamma_1(N)({a/c})\mapsto\Gamma_1(N)\left({a\,\,b\atop c\,\,d}\right)\Gamma_\infty,$$
see \cite[Proposition 3.8.5]{Diamond-Shurman-2005}.
Since $\{s_1, s_2,\ldots,s_\epsilon\}$ is a complete set of inequivalent cusps of $\Gamma_1(N)$ and $\alpha_i\infty = s_i$ for $1\leq i\leq \epsilon$,
 we see that $\{\alpha_1, \alpha_2, \ldots, \alpha_\epsilon\}$ is a complete set of representatives of $\Gamma_1(N)\backslash\Gamma/\Gamma_\infty$.
Applying  Lemma \ref{thm-definition-pgamma-1} with $\gamma_i = \alpha_i$,
we find that there exists an integer $w_1$ and a Taylor series $h(q)$ in powers of $q^{\frac{1}{w_1}}$ such that
\begin{align}\label{thm-ord-eq1}
g_{m,t}(\alpha_i\tau) = (c\tau+d)^{\frac{1}{2}\sum\limits_{\delta|M}r_\delta}q^{p(\alpha_i)}h(q).
\end{align}
By Lemma \ref{phi_trans},
there exists a positive integer $w_2$ and a Taylor series $h^*(q)$ in powers of $q^{\frac{1}{w_2}}$,
such that
\begin{align}\label{thm-ord-eq2}
\phi(\alpha_i\tau)=(c\tau+d)^{\frac{1}{2}\sum\limits_{\delta|N}a_{\delta}}q^{p^*(\alpha_i)}h^*(q).
\end{align}
Combining \eqref{thm-ord-eq1} and \eqref{thm-ord-eq2},
we get
\begin{align}\label{F_Laurent_2}
  F(\alpha_i\tau)=(c\tau+d)^{\frac{1}{2}\big(
  \sum\limits_{\delta|N}a_\delta+\sum\limits_{\delta|M}r_\delta\big)}q^{p(\alpha_i)+p^*(\alpha_i)}\, h(q)\, h^*(q).
\end{align}
Since $F(\tau)$ is a modular function for $\Gamma_1(N)$,
using the condition (1) in Theorem \ref{con_F_modular_function},
\eqref{F_Laurent_2} reduces to
\begin{align}\label{F_Laurent_3}
F(\alpha_i\tau)=q^{p(\alpha_i)+p^*(\alpha_i)}\, h(q)\, h^*(q).
\end{align}
Let $v_{\alpha_i}$ denote the smallest exponent of $q$
on the right hand side of \eqref{F_Laurent_3}.
The relation $\mathrm{ord}_\gamma(f) = v_\gamma w_\gamma$ as given in \eqref{v-eq} yields
\begin{align}\label{v-order}
  v_{\alpha_i} =\frac{{\rm ord}_{\alpha_i}(F(\tau))}{w_{\alpha_i}}.
\end{align}
Since $ h(q)$ and $h^*(q)$ are Taylor series,  it follows from \eqref{F_Laurent_3}
that
\begin{align}\label{ineq}
  v_{\alpha_i} \geq p(\alpha_i)+p^*(\alpha_i).
\end{align}
Combining \eqref{v-order} and \eqref{ineq},
we conclude that
\begin{align}\label{F_lower_gamma}
  \mathrm{ord}_{\alpha_i}(F(\tau))\ge w_{\alpha_i}\,(p(\alpha_i)+p^*(\alpha_i)).
\end{align}
By the definition \eqref{ord_cusp_def}, we have
\begin{align}\label{F-order-eq3}
  \mathrm{ord}_{s_i}(F(\tau)) = \mathrm{ord}_{\alpha_i}(F(\tau)).
\end{align}
Thus the estimate \eqref{ord_cusp-1} follows from \eqref{F_lower_gamma} and \eqref{F-order-eq3}.
\end{proof}

\section{Sketch of the Algorithm}\label{Sketch}

In this section, we give a sketch of our algorithm.
Given a generating function of $a(n)$ as defined
in \eqref{radu-gf} and integers $m$ and $t$,
we can find an integer $N$ satisfying
the conditions \nameref{con_N_1}--\nameref{con_N_10}.
Assume that we have found a generalized eta-quotient $\phi(\tau)$
such that
\begin{align}\label{F_expression}
F(\tau) = \phi(\tau)\,g_{m,t}(\tau)
\end{align}
is a modular function for $\Gamma_1(N)$.
To derive an expression of $F(\tau)$,
we consider a class of modular functions: the set of generalized eta-quotients which are modular functions for $\Gamma_1(N)$ with poles only at infinity, denoted by $GE^\infty(N)$.
Note that the notation
$E^\infty(N)$ is used by Radu \cite{Radu-2015} to denote
the set of modular eta-quotients with poles only at infinity for $\Gamma_0(N)$.
Our goal is to derive an expression of $F(\tau)$ in terms of the generators of $GE^\infty(N)$. Then we are led to a Ramanujan-type identity for $a(mn+t)$.

Our algorithm consists of the following steps:
\begin{description}
 \setlength{\parskip}{2ex}
  \item[Step 1] Use Theorem \ref{con_F_modular_function} to find a generalized eta-quotient $\phi(\tau)$ for which $F(\tau)$ in \eqref{F_expression} is a modular function for $\Gamma_1(N)$.

  \item[Step 2] Find a finite set $\{z_1,z_2,\ldots,z_k\}$ of generators of $GE^\infty(N)$ by utilizing a formula of Robins and the theory of Diophantine inequalities.

  \item[Step 3] Let $\langle GE^\infty(N)\rangle_{\mathbb{Q}}$ be the vector space over $\mathbb{Q}$ generated by generalized eta-quotients in $GE^\infty(N)$. Employ the Algorithm AB of Radu for $\Gamma_1(N)$ on
      $\{z_1,z_2,\ldots,z_k\}$ to generate a modular function $z$ and a $\mathbb{Q}[z]$-module basis $1, e_1, \ldots, e_w$ of
      $\langle GE^\infty(N)\rangle_{\mathbb{Q}}$.

  \item[Step 4] Find a generalized eta-quotient $h$ in terms of generators of $GE^\infty(N)$ for which the modular function $hF$ has a pole only at infinity.  Theorem \ref{order_F-1} can be used to compute the lower bounds of the orders of $hF$ at all cusps of $\Gamma_1(N)$.

  \item[Step 5] Determine whether $hF$ is in $\langle GE^\infty(N)\rangle_{\mathbb{Q}}$ by applying the Algorithm MW of Radu to $hF$, $z$ and $1, e_1, \ldots, e_w$.
      If this goal can be achieved,
      then $F$ can be expressed as a linear combination of generalized eta-quotients over $\mathbb{Q}$.
\end{description}

For example, let us consider the overpartition function $\overline{p}(5n+2)$.
In Sect. \ref{construction},
we found $N=10$ satisfies the conditions \nameref{con_N_1}--\nameref{con_N_10}.

\begin{description}
\setlength{\parskip}{2ex}
  \item[Step 1] As shown in \eqref{Ftau-Q31},
\begin{align*}
F(\tau)
=q^{\frac{2}{5}}
\frac{\eta(10\tau)\eta_{10,5}^9(\tau)}{\eta_{10,4}^8(\tau)}
  \sum_{n=0}^{\infty}\overline{p}(5n+2)q^n
\end{align*}
is a modular function with respect to $\Gamma_1(10)$.

\item[Step 2] We obtain the following generators of $GE^\infty(10)$:
\begin{eqnarray}\label{generator_10}
\begin{split}
& z=\frac{
\eta(\tau)\eta(5\tau)
}{\eta_{5,1}^2(\tau)\eta^2(10\tau)
\eta_{10,1}(\tau)},\quad
z_1=\frac{\eta^2(2\tau)\eta(5\tau)
\eta_{5,1}^2(\tau)}
{\eta(\tau)\eta^{2}(10\tau)
\eta_{10,1}^4(\tau)},\quad\\[6pt]
&
 z_2=\frac{\eta^3(5\tau)\eta_{5,1}^4(\tau)}
{\eta(\tau)\eta(2\tau)\eta(10\tau)
\eta_{10,1}^3(\tau)},\quad
z_3=\frac{
\eta(\tau)\eta_{5,1}^2(\tau)\eta^2(10\tau)}
{\eta^2(2\tau)\eta(5\tau)\eta_{10,1}^4(\tau)},\\[6pt]
&
z_4=\frac{\eta^4(\tau)
\eta_{5,1}^2(\tau)}
{\eta^3(2\tau)\eta(10\tau)
\eta_{10,1}^4(\tau)}.
\end{split}
\end{eqnarray}

\item[Step 3]
Applying the Algorithm AB of Radu to $\{z, z_1, z_2, z_3, z_4\}$,
we find that $1$ is a $z$-module basis of $\langle GE^{\infty}(10)\rangle_{\mathbb{Q}}$. Thus
\begin{align}\label{GE10}
\langle GE^{\infty}(10)\rangle_{\mathbb{Q}}=\left\langle1\right\rangle_{\mathbb{Q}[z]}.
\end{align}

\item[Step 4]
We obtain that
\begin{align}\label{h-over}
  h=\frac{z_1^2z_3^3z_4^3}{z^6z_2^4}=\frac{\eta^{11}(\tau)
\eta_{5,1}^{12}(\tau)\eta^{15}(10\tau)}
  {\eta^{7}(2\tau)\eta^{19}(5\tau)\eta_{10,1}^{14}(\tau)},
\end{align}
for which $hF$ has a pole only at infinity.

\item[Step 5]
Applying Radu's Algorithm MW to $hF$, $z$ and $1$, we see that $hF\in \langle GE^\infty(10)\rangle_{\mathbb{Q}}$ and
\begin{align}\label{simple-over5n2-1}
hF=4z^3+4z^2-32z+32.
\end{align}
\end{description}

The relation \eqref{simple-over5n2-1} can be restated as the following theorem.
The implementations of the above steps will be described in the
subsequent sections.

\begin{thm}\label{thm-over-eq}
We have
\begin{align}\label{thm-over-eta-form}
y\sum_{n=0}^{\infty}\overline{p}(5n+2)q^n
=4z^3+4z^2-32z+32,
\end{align}
where
\begin{align*}
y &= \frac{(q;q)_{\infty}^{11}(q^{10};q^{10})_{\infty}^{16}
 (q,q^4;q^5)_{\infty}^{12}(q^5;q^{10})_{\infty}^{18}}
 {q^3(q^2;q^2)_{\infty}^{7}(q^5;q^5)_{\infty}^{19}
 (q,q^9;q^{10})_{\infty}^{14}
 (q^4,q^6;q^{10})_{\infty}^8},\\[6pt]
z&=
\frac{(q;q)_\infty(q^5;q^5)_\infty}
{q(q,q^4;q^5)_\infty^2(q^{10};q^{10})_\infty^2
(q,q^9;q^{10})_\infty}.
\end{align*}
\end{thm}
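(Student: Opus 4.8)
The plan is to follow the five-step algorithm described in Section~\ref{Sketch} for the eta-quotient $\frac{(q^2;q^2)_\infty}{(q;q)^2_\infty}$ with $m=5$ and $t=2$, which is exactly the worked example already carried through in the text. First I would invoke Theorem~\ref{con_F_modular_function} with $N=10$: one checks that $N=10$ satisfies the conditions \nameref{con_N_1}--\nameref{con_N_10} for $\overline{p}(n)$ (here $M=2$, $r_1=-2$, $r_2=1$), and then solves the linear/congruence system \eqref{overpartition_ex} to produce the integer solution giving $\phi(\tau)=\eta(10\tau)\eta_{10,5}^9(\tau)/\eta_{10,4}^8(\tau)$; this guarantees that $F(\tau)=q^{2/5}\phi(\tau)\sum_{n\ge 0}\overline{p}(5n+2)q^n$ is a modular function for $\Gamma_1(10)$, as recorded in \eqref{Ftau-Q31}. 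Translating $\phi(\tau)$ into $q$-product notation via $\eta(\delta\tau)=q^{\delta/24}(q^\delta;q^\delta)_\infty$, $\eta_{\delta,g}(\tau)=q^{\frac{\delta}{2}P_2(g/\delta)}(q^g,q^{\delta-g};q^\delta)_\infty$, and collecting the fractional powers of $q$ (the $q^{2/5}$ from $g_{5,2}$ plus the contributions of the $P_2$-exponents) should yield exactly the prefactor $y$ in the statement, up to the bookkeeping that the total power of $q$ must be an integer because $F$ is a modular function.

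Next I would carry out Steps 2--5. For Step~2 one uses Robins' order formula for generalized eta-quotients together with a search over the relevant Diophantine inequalities at the cusps in $\mathcal{S}(10)$ from \eqref{S_10} to exhibit the five generators $z,z_1,\dots,z_4$ of $GE^\infty(10)$ displayed in \eqref{generator_10}; translating the single element $z$ into $q$-products gives precisely the $z$ of the statement. For Step~3, Radu's Algorithm AB applied to $\{z,z_1,z_2,z_3,z_4\}$ returns that $\langle GE^\infty(10)\rangle_{\mathbb{Q}}=\langle 1\rangle_{\mathbb{Q}[z]}$, i.e.\ every element is a polynomial in $z$. For Step~4 one uses Theorem~\ref{order_F-1} to compute the lower bounds on $\operatorname{ord}_{s}(F)$ at all eight cusps (these are the bounds listed in Section~\ref{construction}: $-3,\ 19/5,\ -2,\ -18/5,\ -3,\ 27/5,\ -2,\ -2/5$), and then pick a monomial $h=z_1^2z_3^3z_4^3/(z^6z_2^4)$ in the generators whose cusp orders, added to those of $F$, are nonnegative at every cusp except $\infty$; this is the $h$ of \eqref{h-over}. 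Finally, for Step~5, since $hF$ is a modular function for $\Gamma_1(10)$ with a pole only at $\infty$, it lies in $\langle GE^\infty(10)\rangle_{\mathbb{Q}}=\mathbb{Q}[z]$, so Algorithm MW (comparing the principal part of $hF$ at $\infty$ against $1,z,z^2,z^3,\dots$ and solving a finite linear system) produces the identity $hF=4z^3+4z^2-32z+32$. Multiplying through by the $q$-product form of $h^{-1}$ and $\phi^{-1}q^{-2/5}$ and simplifying the resulting eta-quotient to $y$ then gives \eqref{thm-over-eta-form}.

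The main obstacle I anticipate is twofold. First, verifying condition (4) of Theorem~\ref{con_F_modular_function} (equivalently the last line of \eqref{overpartition_ex}) for the chosen solution: this requires checking a root-of-unity/Jacobi-symbol identity for every $a$ with $0<a<120$, $\gcd(a,6)=1$, $a\equiv 1\pmod{10}$, and while each check is mechanical, getting the bookkeeping of the $\nu(\gamma)$-phase right is where errors creep in. Second, and more seriously, Steps~2 and~4 are not purely algorithmic in the sense that one must \emph{find} the generators of $GE^\infty(10)$ and then \emph{guess} a suitable product $h$ of those generators clearing all the finite-cusp poles of $F$; the existence of such an $h$ with small enough pole order at $\infty$ that $hF$ still lands in the (here one-dimensional over $\mathbb{Q}[z]$) space is what makes the algorithm succeed in this instance, and it is the step that "is not guaranteed to succeed" in general. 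Once $h$ is in hand, Step~5 is a finite linear-algebra computation and presents no difficulty. The remaining work — converting back and forth between the $\eta$/$\eta_{\delta,g}$ notation and the $(q^a,q^b;q^c)_\infty$ notation, and checking that the overall power of $q$ is an integer — is routine but must be done carefully to land exactly on the stated $y$ and $z$.
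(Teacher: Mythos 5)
Your proposal follows exactly the route the paper itself takes: it is the worked example threaded through Sections 2, 3, 6 and 7 (find $N=10$ and $\phi$ via Theorem \ref{con_F_modular_function}, the generators \eqref{generator_10} of $GE^\infty(10)$, Algorithm AB giving $\langle GE^\infty(10)\rangle_{\mathbb{Q}}=\mathbb{Q}[z]$, the choice of $h$ in \eqref{h-over} via Theorem \ref{order_F-1}, and Algorithms MC/MW yielding $hF=4z^3+4z^2-32z+32$), followed by the same translation back into $q$-products. The approach and all intermediate objects match the paper's derivation, so nothing further is needed.
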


\section{Generators of $GE^\infty(N)$}\label{generator}

In this section, we show how to implement Step 1 as in the sketch of the previous
section,
that is, finding a finite set of generators of $GE^\infty(N)$.

In light of the symmetry
$$\eta_{\delta, g}(\tau) = \eta_{\delta, \delta-g}(\tau),$$
for any $\delta > 0$ and $\lfloor\delta/2\rfloor < g \leq \delta$,
we may rewrite the generalized eta-quotient $h(\tau)$ in $GE^\infty(N)$
in the following form
\begin{align}\label{gen-ele}
\prod\limits_{\delta|N\atop 0\leq g \leq \lfloor\delta/2\rfloor}\eta_{\delta, g}^{a_{\delta, g}}(\tau),
\end{align}
where
\begin{align}\label{adgg1}
 a_{\delta,g}\in
\begin{cases}
 \frac{1}{2}\mathbb{Z}, &\text{if}\;g=0\;\text{or}\;g=\frac{\delta}{2},\\[6pt]
\mathbb{Z}, &\text{otherwise}.
\end{cases}
\end{align}
Throughout this section, we assume that the generalized eta-quotients
are of the form \eqref{gen-ele}.

To find a set of generators of $GE^\infty(N)$, we first give a
 characterization of generalized eta-quotients $h(\tau)$ in $GE^\infty(N)$,
which involves the orders of $h(\tau)$ at all cusps of $\Gamma_1(N)$.
For any cusp $s$ of $\Gamma_1(N)$,
in order to apply a  formula of Robins \cite[Theorem~4]{Robins-1994} to
compute the order of $h(\tau)$ at   a cusp $s$,
we need to find a cusp of the form $\frac{\lambda}{\mu\varepsilon}$  that is equivalent to $s$,
where $\varepsilon|N$ and
\begin{align}\label{cusp-eq-cond}
  \gcd(\lambda, N)=\gcd(\lambda, \mu)=\gcd(\mu, N)=1.
\end{align}
The existence of such a cusp in the above form  is ensured
 by Corollary 4 of Cho, Koo and Park \cite{Cho-Koo-Park-2009}.

The following theorem gives a characterization of generalized eta-quotients
in $GE^\infty(N)$.

\begin{thm}\label{determine-GE}
Let
$$\mathcal{S}(N)=\{s_1, s_2,\ldots, s_\epsilon\}$$
be a complete set of inequivalent cusps of $\Gamma_1(N)$ and $s_\epsilon=\infty$.
Assume that for any $1\leq i\leq \epsilon$, $s_i$ is equivalent to $\frac{\lambda_i}{\mu_i\varepsilon_i}$,
where $\varepsilon_i|N$ and
\begin{align}\label{cusp-eq-cond}
  \gcd(\lambda_i, N)=\gcd(\lambda_i, \mu_i)=\gcd(\mu_i, N)=1.
\end{align}
Then a generalized eta-quotient $h(\tau)$ in the form of \eqref{gen-ele}
is in $GE^\infty(N)$ if and only if
the following conditions hold:
\begin{align}\label{expon-cond}
\begin{cases}
\sum\limits_{\delta|N}a_{\delta,0}=0,\\[6pt]
\frac{N}{2}\sum\limits_{\delta|N\atop 0\le g \le \lfloor\delta/2\rfloor}\frac{\gcd^2(\delta, \varepsilon_1)}{\delta \varepsilon_1}P_2\Big(\frac{\lambda_1 g}{\gcd(\delta, \varepsilon_1)}\Big)a_{\delta,g} \in\mathbb{N},\\
\qquad\vdots\\
\frac{N}{2}\sum\limits_{\delta|N\atop 0\le g \le \lfloor\delta/2\rfloor}\frac{\gcd^2(\delta, \varepsilon_{\epsilon-1})}{\delta \varepsilon_{\epsilon-1}}P_2\Big(\frac{\lambda_{\epsilon-1} g}{\gcd(\delta, \varepsilon_{\epsilon-1})}\Big)a_{\delta,g}\in\mathbb{N},\\[6pt]
\frac{N}{2}\sum\limits_{\delta|N\atop 0\le g \le \lfloor\delta/2\rfloor}\frac{\gcd^2(\delta, \varepsilon_\epsilon)}{\delta \varepsilon_{\epsilon}}P_2\Big(\frac{\lambda_\epsilon g}{\gcd(\delta, \varepsilon_\epsilon)}\Big)a_{\delta,g}\in\mathbb{Z}.
\end{cases}
\end{align}
\end{thm}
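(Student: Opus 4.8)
The plan is to reduce the characterization of $GE^\infty(N)$ to a computation of the order of a generalized eta-quotient at each cusp, using the formula of Robins \cite[Theorem~4]{Robins-1994}. Recall that a generalized eta-quotient $h(\tau)$ of the form \eqref{gen-ele} lies in $GE^\infty(N)$ precisely when (i) $h(\tau)$ is a modular function for $\Gamma_1(N)$ and (ii) $h(\tau)$ is holomorphic at every cusp except possibly $\infty$. Condition (i) splits further: the transformation law for $\eta_{\delta,g}$ under $\Gamma_1(N)^*$ (the Robins transformation formula quoted in the proof of Lemma~\ref{trans-F-lem}, specialized to the case with no $g_{m,t}$ factor) shows that $h(\gamma\tau)=h(\tau)$ for all $\gamma\in\Gamma_1(N)^*$ — and hence, by Lemma~\ref{gamma1-transformation}, for all $\gamma\in\Gamma_1(N)$ — as soon as the analogues of conditions (1)--(4) of Theorem~\ref{con_F_modular_function} hold with all $r_\delta=0$ and all $\delta|N$. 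The first of these degenerates (since $g=0$ corresponds to $\eta^2(\delta\tau)$, and only the $a_{\delta,0}$ contribute integer exponents of $q^{1/24}$-type in the weight; more precisely the weight-zero condition becomes $\sum_{\delta|N}a_{\delta,0}=0$, which is the first line of \eqref{expon-cond}); the remaining congruence conditions turn out to be automatically implied once one imposes integrality of the cusp orders, so they need not be listed separately.

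The core of the argument is then to translate condition (ii), holomorphy away from $\infty$, into the stated system. For a cusp $s_i$ equivalent to $\tfrac{\lambda_i}{\mu_i\varepsilon_i}$ with the coprimality conditions \eqref{cusp-eq-cond}, I would choose $\alpha_i\in\Gamma$ with $\alpha_i\infty=\tfrac{\lambda_i}{\mu_i\varepsilon_i}$ and invoke Lemma~\ref{phi_trans} (with $\phi=h$, so all $r_\delta=0$), which gives the leading exponent of the $q$-expansion of $h(\alpha_i\tau)$ as $p^*(\alpha_i)$; multiplying by the width $w_{\alpha_i}$ recovers $\mathrm{ord}_{s_i}(h)=w_{\alpha_i}\,p^*(\alpha_i)$. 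The point is that for a pure generalized eta-quotient this inequality in Theorem~\ref{order_F-1} is an equality, because the product of the Taylor series $h^*(q)$ there is an honest power series with nonzero constant term — no cancellation can occur. Using $w_{\alpha_i}=N/\gcd(c_i,N)$ from \eqref{w-calcu}, and that for the cusp $\tfrac{\lambda_i}{\mu_i\varepsilon_i}$ one has $\gcd(c_i,N)=\varepsilon_i$ and $c_i\equiv\mu_i\varepsilon_i$, so $\gcd(\delta,c_i)=\gcd(\delta,\varepsilon_i)$ and the residue $a c_i/\gcd(\delta,c_i)$ appearing inside $P_2$ reduces to $\lambda_i g/\gcd(\delta,\varepsilon_i)$ (up to sign, and $P_2$ is even), one computes
\[
\mathrm{ord}_{s_i}(h)=\frac{N}{\varepsilon_i}\cdot\frac12\sum_{\delta|N\atop 0\le g\le\lfloor\delta/2\rfloor}\frac{\gcd^2(\delta,\varepsilon_i)}{\delta}P_2\!\Big(\frac{\lambda_i g}{\gcd(\delta,\varepsilon_i)}\Big)a_{\delta,g}
=\frac{N}{2}\sum_{\delta|N\atop 0\le g\le\lfloor\delta/2\rfloor}\frac{\gcd^2(\delta,\varepsilon_i)}{\delta\varepsilon_i}P_2\!\Big(\frac{\lambda_i g}{\gcd(\delta,\varepsilon_i)}\Big)a_{\delta,g}.
\]
Holomorphy of $h$ at the finite cusps $s_1,\dots,s_{\epsilon-1}$ is then exactly the requirement that each of these expressions be a nonnegative integer, i.e.\ lies in $\mathbb N$; at $s_\epsilon=\infty$ we impose only that the order be an integer (a pole there being allowed), which is the last line of \eqref{expon-cond}. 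Conversely, if all these conditions hold, then the cusp orders are integers, $h$ is holomorphic except at $\infty$, and — since the cusp orders being integers forces the weight and multiplier system to be trivial (the vanishing of the fractional parts of the orders is equivalent to the multiplier conditions) — $h$ is genuinely a modular function for $\Gamma_1(N)$; hence $h\in GE^\infty(N)$.

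The main obstacle, and the step that needs the most care, is the bookkeeping in the cusp-order computation: verifying that for the representative $\tfrac{\lambda_i}{\mu_i\varepsilon_i}$ one indeed has $\gcd(c_i,N)=\varepsilon_i$, that the argument of $P_2$ in Lemma~\ref{phi_trans} simplifies to $\lambda_i g/\gcd(\delta,\varepsilon_i)$, and that the width formula \eqref{w-calcu} applies (in particular handling the exceptional $N=4$ case, where one can check directly that the cusps in $\mathcal S(4)$ avoid the exceptional condition, or absorb it into the definition of $w_{\alpha_i}$). A secondary subtlety is justifying that the inequality of Theorem~\ref{order_F-1} is an equality for pure generalized eta-quotients — this follows because each factor $\eta^{a_\delta}(T_\delta\tau)$ and $\eta^{(s)}_{\cdots}(T_\delta\tau)$ in \eqref{phi_trans_1} contributes a $q$-series with nonzero leading coefficient, and a finite product of such has nonzero leading coefficient, so no cancellation diminishes the order below $w_{\alpha_i}p^*(\alpha_i)$. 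Finally, one should note explicitly why only the $a_{\delta,0}$ enter the first condition $\sum_{\delta|N}a_{\delta,0}=0$: the exponents $r_{\delta,g}$ for $0<g<\delta$ contribute zero net weight because $\eta_{\delta,g}$ has weight zero (it is a ratio-like object of $\eta$'s), whereas $\eta_{\delta,0}=\eta^2(\delta\tau)$ has weight $1$, so weight-zero of $h$ is precisely $\sum_{\delta|N}a_{\delta,0}=0$.
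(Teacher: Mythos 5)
Your proof follows essentially the same route as the paper: translate membership in $GE^\infty(N)$ into the weight-zero condition $\sum_{\delta|N}a_{\delta,0}=0$ plus integrality/nonnegativity of the cusp orders, compute those orders by the Robins-type formula at the representatives $\lambda_i/(\mu_i\varepsilon_i)$, and invoke the invariance criterion for the converse. Two remarks. First, where the paper simply quotes Robins \cite[Theorem~4]{Robins-1994} for the order formula, you re-derive it from Lemma~\ref{phi_trans} together with the width formula \eqref{w-calcu}; this works, and your observation that the inequality of Theorem~\ref{order_F-1} is an equality for a pure generalized eta-quotient (no cancellation in the leading coefficient of $h^*(q)$) is exactly the point that makes the derivation legitimate. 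Second, the one soft spot is your converse: the assertion that ``the remaining congruence conditions turn out to be automatically implied once one imposes integrality of the cusp orders'' (equivalently, that integral orders together with weight zero force the multiplier system to be trivial) is not obvious and is not something you prove — it is precisely the theorem of Robins that the paper cites at \eqref{Robins-con1}--\eqref{Robins-con2}. As written, your parenthetical ``the cusp orders being integers forces the weight \dots to be trivial'' is also literally false (e.g.\ $\eta^{24}(\tau)$ has integral orders everywhere but weight $12$); the weight-zero condition must be imposed separately, as you do earlier, and only then does Robins' criterion apply. You should either cite that result or supply an argument for it; with that repair the proof is complete and matches the paper's.
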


\begin{proof}
Assume that the generalized eta-quotient $h(\tau)$ as given by
\eqref{gen-ele} is in $GE^\infty(N)$.
By the transformation formula
of Schoeneberg \cite[p.~199~(30)]{Schoeneberg-1974} for $\eta_{g,h}^{(s)}(\tau)$,
we have
\begin{align}\label{weight_0_general}
\sum_{\delta|N}a_{\delta,0}=0,
\end{align}
and so the first condition in \eqref{expon-cond} is satisfied.
To show that the remaining conditions in \eqref{expon-cond} are satisfied,
we proceed to compute the order  of $h(\tau)$ at each cusp
 in $\mathcal{S}(N)$.
Since $h(\tau)\in GE^\infty(N)$,
 for all $1\leq i\leq \epsilon-1$,
\begin{align}\label{order-cond}
\mathrm{ord}_{s_i}(h(\tau))\in\mathbb{N}
\end{align}
and
\begin{align}\label{order-cond-infty}
  \mathrm{ord}_{s_\epsilon}(h(\tau))\in\mathbb{Z}.
\end{align}
For any $1\leq i\leq \epsilon$,
since $s_i$ is equivalent to $\frac{\lambda_i}{\mu_i\varepsilon_i}$,
we get
\begin{align*}
\mathrm{ord}_{s_i} (h(\tau)) = \mathrm{ord}_{\lambda_i/\mu_i\varepsilon_i} (h(\tau)).
\end{align*}
Using the formula of Robins \cite[Theorem~4]{Robins-1994} for the order of $h(\tau)$  at the cusp ${\lambda_i/\mu_i\varepsilon_i}$, namely,
\begin{align*}
\mathrm{ord}_{\lambda_i/\mu_i\varepsilon_i} (h(\tau))
=
\frac{N}{2}\sum\limits_{\delta|N\atop 0\le g \le \lfloor\delta/2\rfloor}\frac{\gcd^2(\delta, \varepsilon_i)}{\delta \varepsilon_i}P_2\Big(\frac{\lambda_i g}{\gcd(\delta, \varepsilon_i)}\Big)a_{\delta,g},
\end{align*}
we find that
\begin{align}\label{order_formula_eq}
\mathrm{ord}_{s_i} (h(\tau))
=
\frac{N}{2}\sum\limits_{\delta|N\atop 0\le g \le \lfloor\delta/2\rfloor}\frac{\gcd^2(\delta, \varepsilon_i)}{\delta \varepsilon_i}P_2\Big(\frac{\lambda_i g}{\gcd(\delta, \varepsilon_i)}\Big)a_{\delta,g}.
\end{align}
For $1\leq i\leq \epsilon-1$, combining \eqref{order-cond} and \eqref{order_formula_eq},
we obtain that
\begin{align}\label{order-cusp}
\frac{N}{2}\sum\limits_{\delta|N\atop 0\le g \le \lfloor\delta/2\rfloor}\frac{\gcd^2(\delta, \varepsilon_i)}{\delta \varepsilon_i}P_2\Big(\frac{\lambda_i g}{\gcd(\delta, \varepsilon_i)}\Big)a_{\delta,g}\in\mathbb{N}.
\end{align}
Setting $i=\epsilon$ in \eqref{order_formula_eq},
it follows from \eqref{order-cond-infty} that
\begin{align}\label{order-infty}
\frac{N}{2}\sum\limits_{\delta|N\atop 0\le g \le \lfloor\delta/2\rfloor}\frac{\gcd^2(\delta, \varepsilon_\epsilon)}{\delta \varepsilon_{\epsilon}}P_2\Big(\frac{\lambda_\epsilon g}{\gcd(\delta, \varepsilon_\epsilon)}\Big)a_{\delta,g}\in\mathbb{Z}.
\end{align}
Combining \eqref{weight_0_general},
\eqref{order-cusp} and \eqref{order-infty},
we are led to \eqref{expon-cond}.

Conversely,
assume that the conditions in \eqref{expon-cond} are satisfied.
From \eqref{expon-cond} and \eqref{order_formula_eq},
we see that
\begin{align}\label{Robins-con1}
\mathrm{ord}_0(h(\tau)) \in \mathbb{Z} \quad \text{and} \quad
\mathrm{ord}_\infty(h(\tau)) \in \mathbb{Z}.
\end{align}
The first condition of \eqref{expon-cond}
says that
\begin{align}\label{Robins-con2}
\sum_{\delta|N}a_{\delta,0}=0.
\end{align}
Robins \cite{Robins-1994} showed that if
a generalized eta-quotient $h(\tau)$ satisfies \eqref{Robins-con1} and
\eqref{Robins-con2}, 
then for any $\gamma\in\Gamma_1(N)$,
\begin{align}\label{h_trans}
h(\gamma\tau) = h(\tau).
\end{align}
By \eqref{order_formula_eq} and the conditions in \eqref{expon-cond},
we see that
for any $s\in\mathcal{S}(N)\setminus\{ \infty\}$,
\begin{align}\label{h_order}
\mathrm{ord}_s(h(\tau))\in\mathbb{N}.
\end{align}
Combining \eqref{h_trans} and \eqref{h_order}, we conclude that $h(\tau)\in GE^\infty(N)$.
\end{proof}

Based on the above theorem, the generalized eta-quotients in $GE^\infty(N)$
are determined by the solutions of \eqref{expon-cond}.
Next we show that  \eqref{expon-cond} can be solved by
transforming the conditions in \eqref{expon-cond}   to
a system of Diophantine inequalities, so that we can
obtain a finite set of generators of $GE^\infty(N)$.

Set
$$y_i = \frac{N}{2}\sum\limits_{\delta|N\atop 0\le g \le \lfloor\delta/2\rfloor}\frac{\gcd^2(\delta, \varepsilon_i)}{\delta \varepsilon_i}P_2\Big(\frac{\lambda_i g}{\gcd(\delta, \varepsilon_i)}\Big)a_{\delta,g}$$
for $1\leq i \leq \epsilon$.
It follows from \eqref{expon-cond} that $y_i\in\mathbb{N}$ for $1\leq i \leq \epsilon-1$
and $y_\epsilon\in\mathbb{Z}$.
Let
\begin{align*}
\chi_\delta(g)=
\begin{cases}
2, &\text{if}\;g=0\;\text{or}\;g=\frac{\delta}{2},\\[6pt]
1, &\text{otherwise},
\end{cases}
\end{align*}
and $a_{\delta,g}'=\chi_\delta(g) \, a_{\delta,g}$
for any $\delta|N$ and $0\leq g \leq \lfloor \delta/2\rfloor$.
By \eqref{adgg1}, it can be easily checked that each
$a_{\delta, g}'$ is an integer.
Then by Theorem \ref{determine-GE},
$h(\tau)\in GE^\infty(N)$ if and only if
$a_{\delta, g}'$ $(\delta|N, 0\le g\le\lfloor\delta/2\rfloor)$ and
$y_i$ $(1\le i \le \epsilon)$ is an integer solution of the following Diophantine inequalities:
\begin{align}\label{linear_inequalities}
\begin{cases}
\sum\limits_{\delta|N}a_{\delta, 0}'=0,\\[6pt]
\frac{N}{2}\sum\limits_{\delta|N\atop 0\le g \le \lfloor\delta/2\rfloor}\frac{\gcd^2(\delta, \varepsilon_1)}{\delta \varepsilon_1}P_2\Big(\frac{\lambda_1 g}{\gcd(\delta, \varepsilon_1)}\Big)\frac{a_{\delta,g}'}{\chi_\delta(g)}-y_1=0,\\[6pt]
\quad\vdots\\[6pt]
\frac{N}{2}\sum\limits_{\delta|N\atop 0\le g \le \lfloor\delta/2\rfloor}\frac{\gcd^2(\delta, \varepsilon_{\epsilon-1})}{\delta \varepsilon_{{\epsilon-1}}}P_2\Big(\frac{\lambda_{\epsilon-1} g}{\gcd(\delta, \varepsilon_{\epsilon-1})}\Big)\frac{a_{\delta,g}'}{\chi_\delta(g)}-y_{\epsilon-1}=0,\\[6pt]
\frac{N}{2}\sum\limits_{\delta|N\atop 0\le g \le \lfloor\delta/2\rfloor}\frac{\gcd^2(\delta, \varepsilon_\epsilon)}{\delta \varepsilon_\epsilon}P_2\Big(\frac{\lambda_\epsilon g}{\gcd(\delta, \varepsilon_\epsilon)}\Big)\frac{a_{\delta,g}'}{\chi_\delta(g)}-y_\epsilon=0,\\[6pt]
y_1\ge 0,\\[6pt]
\quad\vdots\\[6pt]
y_{\epsilon-1}\ge 0.
\end{cases}
\end{align}
Notice that different cusps may have the same order for $h(\tau)$,
there may exist redundant relations in above system of relations.
More precisely,
if for two cusps $s_i,s_j\in\mathcal{S}(N)\setminus\{\infty\}$,
$$\mathrm{ord}_{s_i}(h(\tau))=\mathrm{ord}_{s_j}(h(\tau)),$$
then we may ignore the relations contributed by $s_j$.
We now assume that
after the elimination of redundant relations,
the remaining relations
are still in the same form as in \eqref{linear_inequalities}.
It is known that there exist integral vectors
$\alpha_1, \ldots, \alpha_{k}$
such that the set of integer solutions of \eqref{linear_inequalities} is given by
\[\{u_1\alpha_1+\cdots+u_{k}\alpha_{k}\colon u_1, \ldots, u_{k}\in\mathbb{N}\},\]
see \cite[p.~234]{Schrijver-1986},
which implies that $GE^\infty(N)$ has a finite set of generators $z_1, \ldots, z_{k}$.
One can use the
package \texttt{4ti2} \cite{4ti2} in SAGE to find such a set of integral vectors $\alpha_1, \ldots, \alpha_{k}$.

Let us consider the case $N=10$ as an example.
Notice that for any generalized eta-quotient $h(\tau)$,
$$\mathrm{ord}_{1/4}(h(\tau))=\mathrm{ord}_{1/2}(h(\tau))$$
and
$$\mathrm{ord}_{0}(h(\tau))=\mathrm{ord}_{1/3}(h(\tau)).$$
By \eqref{linear_inequalities},
we obtain the following  Diophantine inequalities
after eliminating the relations contributed by the cusps $1/2$ and $1/3$:
\begin{align}\label{linear_inequalities-10}
\begin{cases}
a_{1,0}'+a_{2,0}'+a_{5,0}'+a_{10,0}'=0,\\[6pt]
\frac{5 \,a_{1,0}'}{12}
+\frac{5 \,a_{2,0}'}{24}
  +\frac{5\,a_{2,1}'}{24}
+\frac{a_{5,0}'}{12}+
  \frac{a_{5,1}}{6}+\frac{a_{5,2}}{6}\\[6pt]
  \quad+\frac{a_{10,0}'}{24}+\frac{a_{10,1}}{12}
  +\frac{a_{10,2}}{12}+\frac{a_{10,3}}{12}
  +\frac{a_{10,4}}{12}+\frac{a_{10,5}'}{24}-y_1=0,\\[6pt]
\quad\vdots\\[6pt]
\frac{5\,a_{1,0}'}{24}+\frac{5\,a_{2,0}'}{12}
-\frac{5 \,a_{2,1}'}{24}
+\frac{a_{5,0}'}{24}
+\frac{a_{5,1}'}{12}
+\frac{a_{5,2}'}{12}\\[6pt]
\quad+\frac{a_{10,0}'}{12}
-\frac{a_{10,1}'}{12}
+\frac{a_{10,2}'}{6}
-\frac{a_{10,3}'}{12}
+\frac{a_{10,4}'}{6}-\frac{a_{10,5}'}{24}-y_5 = 0,
\\[6pt]
\frac{a_{1,0}'}{24}+\frac{a_{2,0}'}{12}-\frac{a_{2,1}'}{24}
+\frac{5 \,a_{5,0}'}{24}+\frac{a_{5,1}'}{60}-\frac{11 \,a_{5,2}'}{60}\\[6pt]
\quad
+\frac{5 \,a_{10,0}'}{12}+\frac{23 \,a_{10,1}'}{60}
+\frac{a_{10,2}'}{30}-\frac{13 \,a_{10,3}'}{60}
-\frac{11 \,a_{10,4}'}{30}-\frac{5 \,a_{10,5}'}{24}-y_6=0\\[6pt]
y_1\ge 0,\\[6pt]
\quad\vdots\\[6pt]
y_{5}\ge 0.
\end{cases}
\end{align}
Each solution
$(a_{1,0}', \ldots, a_{10,5}', y_1,\ldots, y_6)$ of \eqref{linear_inequalities-10}
can be expressed as
\begin{align}\label{sol-10}
  \sum_{i=1}^5 c_i \alpha_i+\sum_{i=1}^6 d_i \beta_i,
\end{align}
where $c_1,\ldots,c_5$
are nonnegative integers,
$d_1,\ldots, d_{6}$ are integers and
\begin{align*}
&\alpha_1=(-1, 2, 0, 1, 2, 0, -2, -4, 0, 0, 0, 0, 0, 1, 0, 0, 0, -2),\\[6pt] &\alpha_2=(-1, -1, 0, 3, 4, 0, -1, -3, 0, 0, 0, 0, 0, 0, 1, 0, 0, -1),\\[6pt]
&\alpha_3=(1, -2, 0, -1, 2, 0, 2, -4, 0, 0, 0, 0, 0, 0, 0, 0, 1, -1),\\[6pt]
&\alpha_4=(1, 0, 0, 1, -2, 0, -2, -1, 0, 0, 0, 0, 0, 0, 0, 1, 0, -1),\\[6pt]
&\alpha_5 =(4, -3, 0, 0, 2, 0, -1, -4, 0, 0, 0, 0, 1, 0, 0, 0, 0, -2),\\[6pt]
&\beta_1=(0, 0, 0, -1, 0, 0, 1, 0, 0, 0, 0, 1, 0, 0, 0, 0, 0, 0),\\[6pt]
&\beta_2=(-1, 1, 1, 0, 0, 0, 0, 0, 0, 0, 0, 0, 0, 0, 0, 0, 0, 0),\\[6pt]
&\beta_3=(-1, 0, 0, 1, 1, 1, 0, 0, 0, 0, 0, 0, 0, 0, 0, 0, 0, 0),\\[6pt]
&\beta_4=(0, -1, 0, 0, 1, 0, 1, -1, 1, 0, 0, 0, 0, 0, 0, 0, 0, 0),\\[6pt]
&\beta_5=(-1, 1, 0, 1, 0, 0, -1, 1, 0, 1, 0, 0, 0, 0, 0, 0, 0, 0),\\[6pt]
&\beta_6=(0, 0, 0, 0, -1, 0, 0, 1, 0, 0, 1, 0, 0, 0, 0, 0, 0, 0).
\end{align*}
Since $a_{\delta,g}= a_{\delta,g}'/\chi_\delta(g)$,
we obtain eleven generalized eta-quotients.
It can be checked that the generalized eta-quotients corresponding to
$\beta_1, \ldots, \beta_6$ are equal to 1.
For example, the generalized eta-quotient corresponding to
$\beta_1$ is given by
\begin{align}
  h(\tau)=
  \frac{\eta_{10,0}^{\frac{1}{2}}(\tau)\eta_{10,5}^{\frac{1}{2}}(\tau)}{\eta_{5,0}^{\frac{1}{2}}(\tau)}.
\end{align}
Invoking \eqref{ge-e}, namely,
\begin{align*}
  \eta_{\delta, 0}(\tau)=\eta^2(\delta\tau)\quad\text{and}\quad\eta_{\delta, \frac{\delta}{2}}(\tau)=\frac{\eta^2(\frac{\delta}{2}\tau)}{\eta^2(\delta\tau)}.
\end{align*}
we obtain that $h(\tau)=1$.
The generalized eta-quotients corresponding to
$\alpha_1, \ldots, \alpha_5$ are the generators $z_1, z_2, z_3,z, z_4$
 as given in \eqref{generator_10}.

\section{Radu's Algorithm AB}\label{Radu-AB}

In the previous section, it was shown that
$GE^\infty(N)$ admits a finite set of generators $z_1, \ldots, z_{k}$.
Radu \cite{Radu-2009} developed the Algorithm AB to
produce a module basis of $\langle E^\infty(N)\rangle_{\mathbb{Q}}$,
based on a finite set of generators of $E^\infty(N)$.
In this section, we demonstrate how to apply
Radu's Algorithm AB to a finite set of
generators of $GE^\infty(N)$ to derive
a modular function $z$ and a module basis
$1, e_1, \ldots, e_w$ of the $\mathbb{Q}[z]$-module
$\langle GE^\infty(N)\rangle_{\mathbb{Q}}$.

We first give an overview of Radu's Algorithm AB.
Given  modular functions $z_1, \ldots, z_k$ for $\Gamma_0(N)$ with poles only at infinity,
Radu's Algorithm AB aims to
produce a modular function $z\in \mathbb{Q}[z_1, \ldots, z_k]$
and a $z$-reduced sequence
$e_1, \ldots, e_w\in \mathbb{Q}[z_1, \ldots, z_k]$
such that
\begin{align}\label{basis-ration}
\mathbb{Q}[z_1, \ldots, z_k] = \mathbb{Q}[z]+\mathbb{Q}[z] e_1+\cdots +\mathbb{Q}[z]e_w.
\end{align}
The condition on a  $z$-reduced sequence
ensures that $1, e_1, \ldots, e_w$ form a $\mathbb{Q}[z]$-module basis of
$\mathbb{Q}[z_1, \ldots, z_k]$.
The right hand side of \eqref{basis-ration} is denoted by
$\langle 1, e_1, \ldots, e_w\rangle_{\mathbb{Q}[z]}$.

Let $\left\langle E^\infty(N)\right\rangle_{\mathbb{Q}}$ denote the vector space over $\mathbb{Q}$ generated by $E^\infty(N)$.
As pointed out by Radu \cite{Radu-2015},
$\left\langle E^\infty(N)\right\rangle_{\mathbb{Q}}$ does not
have a finite basis as a vector space over $\mathbb{Q}$,
but it has a finite basis when considered as a $\mathbb{Q}[z]$-module for some $z$ in $\left\langle E^\infty(N)\right\rangle_{\mathbb{Q}}$.
To obtain such a modular function $z$ and a $\mathbb{Q}[z]$-module basis,
Radu applied the Algorithm AB to the generators $z_1, \ldots, z_k$ of $E^\infty(N)$,
then obtained a $z$-module basis $1, e_1, \ldots, e_w$ of the $\mathbb{Q}[z]$-module $\left\langle E^\infty(N)\right\rangle_{\mathbb{Q}}$ for some $z\in \left\langle E^\infty(N)\right\rangle_{\mathbb{Q}}$.

As will be seen, Radu's Algorithm AB can be adapted to $\Gamma_1(N)$.
The output of Algorithm AB consists of a modular function
$z\in \mathbb{Q}[z_1, \ldots, z_k]$
and a $z$-reduced sequence
$e_1,\ldots,e_w$.
The output of the Algorithm AB will be
carried over to the Algorithm MC and the Algorithm MW,
which require the input of a $z$-reduced sequence.
Thus, for the purpose of this paper,
we do not need to elaborate on the definition of a $z$-reduced sequence,
which can be found in \cite{Radu-2015}.

It is known that if $f$ is a modular function for $\Gamma_0(N)$
such that  ${\rm ord}_{a/c}(f)$ $\geq 0$
for every cusp $a/c$ of $\Gamma_0(N)$,
then $f$ is a constant,
see Newman \cite[Section, Proof of Lemma 3]{Newman-1957},
Knopp \cite[Chapter~2, Theorem~7]{Knopp-1993},
and Radu \cite[Lemma~5]{Radu-2015}.
Notice that this assertion also holds for $\Gamma_1(N)$.
Thus the Algorithm AB applies to  modular functions with poles only at infinity for $\Gamma_1(N)$.
It is worth mentioning that the Algorithm AB is based on the algorithms MC,
VB and MB,
which are also valid for modular functions
with poles only at infinity for $\Gamma_1(N)$.
Since the Algorithm MW of Radu is a refinement of the Algorithm MC,
it also works for $\Gamma_1(N)$.

We proceed to find a modular function $z$
and a module basis of $\mathbb{Q}[z]$-module
$\langle GE^\infty(N)\rangle_\mathbb{Q}$.
Let $\{z_1, \ldots, z_{k}\}$ be a finite set of generators of $GE^\infty(N)$.
Note that
\begin{align}\label{generator-GE}
\langle GE^\infty(N)\rangle_\mathbb{Q} = \mathbb{Q}[z_1, \ldots, z_k].
\end{align}
Applying the Algorithm AB to $z_1, z_2, \ldots, z_k$,
we obtain a modular function
$z\in \left\langle GE^\infty(N)\right\rangle_{\mathbb{Q}}$ and a $z$-reduced
sequence $e_1, \ldots, e_w\in \left\langle GE^\infty(N)\right\rangle_{\mathbb{Q}}$
such that
\begin{align}\label{AB-GE}
\mathbb{Q}[z_1, \ldots, z_k]=
\langle 1, e_1, \ldots, e_w\rangle_{\mathbb{Q}[z]}.
\end{align}
Combining \eqref{generator-GE} and \eqref{AB-GE}, we find that
\[\left\langle GE^\infty(N)\right\rangle_{\mathbb{Q}}=\langle 1, e_1, \ldots, e_w\rangle_{\mathbb{Q}[z]}.\]
Using the property that $e_1, e_2, \ldots, e_w$ form
a $z$-reduced sequence,
we deduce that $1, e_1, \ldots, e_w$ constitute a $\mathbb{Q}[z]$-module basis of $\langle GE^\infty(N)\rangle_\mathbb{Q}$.

For example,
applying the Algorithm AB for $\Gamma_1(N)$ to the generators
$ z, z_1, z_2, z_3, z_4 $ of $GE^\infty(10)$
given by \eqref{generator_10},
we obtain that
\begin{align}\label{GE10}
\langle GE^{\infty}(10)\rangle_{\mathbb{Q}}={\mathbb{Q}[z]}.
\end{align}

\section{Finding a Generalized Eta-Quotient}\label{computing_F}

In this section,
we present an implementation of Step 4 in the algorithm outlined in
Sect. \ref{Sketch}.
Assume that  $\{z_1, z_2, \ldots, z_k\}$  is a set of generators of $GE^\infty(N)$
and $F(\tau)$ is a modular function for $\Gamma_1(N)$ as given in \eqref{F_expression}.
Our objective is to
find a generalized eta-quotient $h(\tau)$ of the form
\begin{align}\label{h-form}
  h(\tau)=\prod_{j=1}^{k} z_j^{t_j},
\end{align}
such that the modular function $hF$ has a pole only at infinity,
that is, for any cusp $s\neq \infty$,
\begin{align}\label{ord-eq}
\mathrm{ord}_s(hF)\geq 0,
\end{align}
where $t_j$ are integers.
To find the integers $t_j$ for which the relation \eqref{ord-eq} holds,
we shall establish a system of linear inequalities any solution of which leads to a desired generalized eta-quotient $h$. The linear
inequalities are derived by the lower bounds of $\mathrm{ord}_s(hF)$
for all cusps $s\neq \infty$.

Now we utilize Theorem \ref{order_F-1}
to obtain the lower bound of $\mathrm{ord}_s(hF)$.
Let
$$\mathcal{S}(N)=\{s_1, s_2,\ldots, s_\epsilon\}$$
be a complete set of inequivalent cusps
of $\Gamma_1(N)$ and $s_\epsilon = \infty$.
For any $1\leq i \leq \epsilon$ and $1\leq j\leq k$, denote
$\mathrm{ord}_{s_i} z_j$ by $ b_{ij}.$
By the definition \eqref{h-form},
we have for each cusp $s_i$,
\begin{align}\label{order-hF}
  {\rm ord}_{s_i}(hF)=\sum_{j=1}^k t_j b_{ij}+{\rm ord}_{s_i}(F).
\end{align}
By Theorem \ref{order_F-1},
we see that for any $1\leq i\leq \epsilon$,
\begin{align}\label{low_ord_F}
    \mathrm{ord}_{s_i}(F(\tau))\ge d_i,
\end{align}
where
$$d_i = w_{\alpha_i}\,(p(\alpha_i)+p^*(\alpha_i)),$$
and $\alpha_i$ is defined in Theorem \ref{order_F-1}.
Combining \eqref{order-hF} and \eqref{low_ord_F}, we get
\begin{align}\label{lower-hF}
  {\rm ord}_{s_i}(hF)\geq \sum_{j=1}^k t_j b_{ij}+d_i.
\end{align}
Consider the Diophantine inequalities
\begin{align}\label{h-exist-1}
\begin{cases}
\sum\limits_{j=1}^k t_j b_{1j}+d_1 >-1,\\[3pt]
\quad\quad\vdots\\[3pt]
\sum\limits_{j=1}^k t_j b_{(\epsilon-1)j}+d_{\epsilon-1} >-1.
\end{cases}
\end{align}
Now,  if we can find integers $t_1, \ldots, t_k$ such that \eqref{h-exist-1} holds, then  \eqref{lower-hF} implies that
the generalized eta-quotient $h(\tau)$ determined by $z_1, z_2, \ldots, z_k$ and $t_1, t_2, \ldots, t_k$
satisfies \eqref{ord-eq}.
Hence we deduce that any integer solution of \eqref{h-exist-1}
leads to a generalized eta-quotient $h(\tau)$
such that $hF$ has a pole only at infinity.

We note that different generalized eta-quotients $h$
may lead to different expressions for $F$.
In order to get a relatively simple
expression for $F$,
we impose a further condition that the order of $hF$ at infinity
is as large as possible.
While we cannot rigorously describe what a simple expression means,
intuitively speaking,
the above
condition appears to play a role in getting a relatively simple
expression for $F$.
Next we state how to find such a generalized eta-quotient $h(\tau)$.

It is known that there exist integral vectors
$\alpha_1, \ldots, \alpha_{w}, \beta_1, \ldots, \beta_l$
such that the set of integer solutions of \eqref{h-exist-1} is given by
\begin{align}\label{sol-h}
\{\alpha_i +v_1\beta_1+\cdots + v_l\beta_l\colon 1\leq i\leq w \;\text{and} \;
v_1, \ldots, v_l\in\mathbb{N}\},
\end{align}
see \cite[p.~234]{Schrijver-1986}.

The following theorem shows how to find a generalized eta-quotient
$h$ such that $\mathrm{ord}_\infty(hF)$ attains the maximum value
among all the $h$ satisfying \eqref{h-exist-1}.

\begin{thm}\label{sol-best-h}
For $1\leq i \leq w$,
let
\begin{align*}
    \alpha_i=(\alpha_{i1},\alpha_{i2},\ldots,\alpha_{ik}),
\end{align*}
as given in \eqref{sol-h}.
Let $h_i$ be the generalized eta-quotient determined
by $z_1, z_2, \ldots, z_k$ and $\alpha_i$,
that is
\begin{align}\label{h_i-def}
  h_i(\tau) =\prod_{j=1}^{k} z_j^{\alpha_{ij}}.
\end{align}
Assume that
\begin{align}\label{ord-ineq-2}
  \mathrm{ord}_\infty(h_1F)\geq \mathrm{ord}_\infty(h_iF)
\end{align}
for $2\leq i\leq w$.
For any integer solution $\mu=(\mu_1,\mu_2,\ldots,\mu_k)$
of \eqref{h-exist-1},
let $g$ be the generalized eta-quotient
\begin{align}
    g(\tau)=\prod_{j=1}^{k} z_j^{\mu_{j}}.
\end{align}
Then we have
\begin{align*}
\mathrm{ord}_\infty(h_1F) \geq \mathrm{ord}_\infty(gF).
\end{align*}
\end{thm}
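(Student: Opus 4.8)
The plan is to combine the explicit description \eqref{sol-h} of the solution set of \eqref{h-exist-1} with two standard facts: the order at a cusp is a discrete valuation (hence additive on products), and a modular function for $\Gamma_1(N)$ whose order is nonnegative at every cusp must be constant (Newman, Knopp, Radu, as recalled in Sect. \ref{Radu-AB}).

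First I would write $\mu$ in the canonical form provided by \eqref{sol-h}: there exist an index $1\le i\le w$ and nonnegative integers $v_1,\ldots,v_l$ with $\mu=\alpha_i+v_1\beta_1+\cdots+v_l\beta_l$. Setting $g_s(\tau)=\prod_{j=1}^{k}z_j^{\beta_{sj}}(\tau)$ for $1\le s\le l$, where $\beta_s=(\beta_{s1},\ldots,\beta_{sk})$, this gives $g=h_i\cdot\prod_{s=1}^{l}g_s^{\,v_s}$, and additivity of $\mathrm{ord}_\infty$ yields
\[\mathrm{ord}_\infty(gF)=\mathrm{ord}_\infty(h_iF)+\sum_{s=1}^{l}v_s\,\mathrm{ord}_\infty(g_s).\]

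The crux is to show $\mathrm{ord}_\infty(g_s)\le 0$ for each $s$. Since $\mu+v\beta_s$ is again a solution of \eqref{h-exist-1} for every $v\in\mathbb{N}$ (again by \eqref{sol-h}), letting $v\to\infty$ in the $r$-th inequality forces $\sum_{j=1}^{k}\beta_{sj}b_{rj}\ge 0$ for every cusp index $1\le r\le\epsilon-1$; that is, $\beta_s$ is a recession direction of \eqref{h-exist-1}. Because $b_{rj}=\mathrm{ord}_{s_r}(z_j)$ and $\mathrm{ord}_{s_r}$ is additive, this means $\mathrm{ord}_{s_r}(g_s)=\sum_{j=1}^{k}\beta_{sj}b_{rj}\ge 0$ for every cusp $s_r\ne\infty$. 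As $g_s$ is holomorphic and nonvanishing on $\mathbb{H}$ and is a modular function for $\Gamma_1(N)$, either $\mathrm{ord}_\infty(g_s)\ge 0$, in which case $g_s$ has nonnegative order at all cusps and is therefore constant, giving $\mathrm{ord}_\infty(g_s)=0$, or else $\mathrm{ord}_\infty(g_s)<0$. In both cases $\mathrm{ord}_\infty(g_s)\le 0$.

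Finally, since each $v_s\ge 0$, the displayed identity gives $\mathrm{ord}_\infty(gF)\le\mathrm{ord}_\infty(h_iF)$, and the hypothesis \eqref{ord-ineq-2} (trivial when $i=1$) gives $\mathrm{ord}_\infty(h_iF)\le\mathrm{ord}_\infty(h_1F)$, which is the claim. I expect the only genuinely delicate point to be the inequality $\mathrm{ord}_\infty(g_s)\le 0$: one must observe that the generators $\beta_s$ of the recession cone correspond to generalized eta-quotients $g_s$ having no pole at any finite cusp, and then invoke the rigidity of such modular functions; everything else is bookkeeping with the valuations $\mathrm{ord}_{s_r}$.
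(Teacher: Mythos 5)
Your proof is correct, and its overall skeleton coincides with the paper's: decompose $\mu=\alpha_i+v_1\beta_1+\cdots+v_l\beta_l$ via \eqref{sol-h}, use additivity of $\mathrm{ord}_\infty$ to get $\mathrm{ord}_\infty(gF)=\mathrm{ord}_\infty(h_iF)+\sum_s v_s\,\mathrm{ord}_\infty(g_s)$, and reduce everything to showing that each generator $g_s$ of the recession directions satisfies $\mathrm{ord}_\infty(g_s)\le 0$. Where you genuinely diverge is in the proof of that key inequality. The paper argues by contradiction: assuming $f_j$ (your $g_s$) is non-constant with $\mathrm{ord}_\infty(f_j)\ge 0$, it extracts a finite cusp $s$ with $\mathrm{ord}_s(f_j)<0$, and then exhibits the specific auxiliary solution $\alpha_1+(d+1)\beta_j$ of \eqref{h-exist-1} with $d=\mathrm{ord}_s(h_1F)$, whose associated function $f_j^{d+1}h_1F$ would have to satisfy \eqref{ord-eq} at $s$ yet has order $(d+1)\mathrm{ord}_s(f_j)+d<0$ there. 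You instead observe directly that $\beta_s$ is a recession direction of the system \eqref{h-exist-1}, so letting $v\to\infty$ in $\mu+v\beta_s$ forces $\sum_j\beta_{sj}b_{rj}\ge 0$ for every finite cusp, i.e.\ $g_s$ has no pole away from infinity; the Liouville-type rigidity (constant if nonnegative order everywhere) then gives $\mathrm{ord}_\infty(g_s)\le 0$ at once. Your route is arguably cleaner and more conceptual: it isolates the geometric fact that the $\beta_s$ span the recession cone and therefore correspond to elements of $GE^\infty(N)$ (up to the normalization at infinity), whereas the paper's contradiction argument buries the same information in a one-off choice of exponent. Both rely on exactly the same two external ingredients — the integral-cone description \eqref{sol-h} and the rigidity of pole-free modular functions — so nothing is lost or gained in generality.
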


\begin{proof}
By \eqref{sol-h},
there exist an integer $1\leq i \leq w$,
and nonnegative integers $v_1, \ldots, v_l$ such that
\begin{align}\label{uni-solutrion}
  \mu = \alpha_i +v_1\beta_1+\cdots + v_l\beta_l.
\end{align}
For $1\leq j \leq l$,
let
\begin{align*}
    \beta_j=(\beta_{j1},\beta_{j2},\ldots,\beta_{jk}).
\end{align*}
and let $f_j$ be the generalized eta-quotient defined by
\begin{align}\label{f_i-def}
  f_j(\tau) =\prod_{i=1}^{k} z_i^{\beta_{ji}}.
\end{align}
Combining \eqref{h_i-def}, \eqref{uni-solutrion} and \eqref{f_i-def},
we obtain that
\begin{align*}
g(\tau) = h_i\prod_{j=1}^l f_j^{v_j}.
\end{align*}
Thus,
\begin{align}\label{ord-ineq-1}
\mathrm{ord}_\infty(gF) = \mathrm{ord}_\infty(h_iF)
+
\sum_{j=1}^l v_j \mathrm{ord}_\infty(f_j).
\end{align}
Under the condition \eqref{ord-ineq-2},
it follows from \eqref{ord-ineq-1} that
\begin{align}\label{ord-ineq}
\mathrm{ord}_\infty(gF) \leq \mathrm{ord}_\infty(h_1F)
+
\sum_{j=1}^l v_j \mathrm{ord}_\infty(f_j).
\end{align}
We claim that for each $1 \leq j \leq l$,
\begin{align}\label{o-lem}
  {\rm ord}_{\infty}(f_j)\leq 0.
\end{align}
There are two cases.

Case 1. If $f_j(\tau)$ is a constant, then
$\mathrm{ord}_\infty(f_j)=0.$

Case 2.
If $f_j(\tau)$ is not a constant,
we shall show that
$\mathrm{ord}_\infty (f_j) < 0$.
Assume to the contrary that
$\mathrm{ord}_\infty (f_j)\geq 0$.
Since $f_j(\tau)$ is not a constant,
there exists a cusp $s\neq \infty$ such that
$\mathrm{ord}_s (f_j)<0$.
By the assumption \eqref{ord-eq},
we have
$\mathrm{ord}_s(h_1F) \geq 0$.
Let $d = \mathrm{ord}_s(h_1F)$.
By \eqref{sol-h},
we see that $\alpha_1+(d+1)\beta_j$ is a solution of \eqref{h-exist-1}.
It follows that the generalized eta-quotient
$f_j^{ d+1 } h_1$
satisfies \eqref{ord-eq},
and so
\begin{align}\label{new-h-l}
\mathrm{ord}_s(f_j^{d+1}h_1F)\geq 0.
\end{align}
However, since $\mathrm{ord}_s(f_j)<0$,
we have
\begin{align*}
\mathrm{ord}_s(f_j^{d+1}h_1F) = (d+1)\mathrm{ord}_s(f_j)+d <0,
\end{align*}
which contradicts \eqref{new-h-l}.
Thus we deduce that $\mathrm{ord}_\infty (f_j) < 0$,
as claimed.
Combining the above two cases,
we find that \eqref{o-lem} holds for each $1\leq j\leq l$.
In view of \eqref{ord-ineq},
we conclude that
\begin{align}
  \mathrm{ord}_\infty(gF)\leq \mathrm{ord}_\infty(h_1F),
\end{align}
and this completes the proof.
\end{proof}

For the overpartition function $\overline{p}(5n+2)$, we have found a modular function $F(\tau)$ for $\Gamma_1(10)$ as given in \eqref{Ftau-Q31}.
For the  generators
$z, z_1, z_2, z_3, z_4$ of
$GE^{\infty}(10)$
as given in \eqref{generator_10}, we obtain the following
system of linear inequalities \eqref{h-exist-1}:
\begin{align}\label{h-5n2}
\begin{cases}
t_5-3>-1,\\[6pt]
t_3+\frac{19}{5}>-1,\\[6pt]
t_2-2>-1,\\[6pt]
t_4-\frac{18}{5}>-1,\\[6pt]
t_1+\frac{27}{5}>-1.
\end{cases}
\end{align}
Each integer solution $(t_1, t_2, t_3, t_4, t_5)$ of \eqref{h-5n2}
can be expressed as
\begin{align}\label{sol-10}
\alpha_1+\sum_{i=1}^5 v_i \beta_i,
\end{align}
where $v_1, \ldots, v_5$
are nonnegative integers, and
\begin{align*}
\alpha_1 &= (-6, 2, -4, 3, 3),\\[3pt]
\beta_1 &= (1, 0, 0, 0, 0),\\[3pt]
\beta_2 &= (0, 1, 0, 0, 0),\\[3pt]
\beta_3 &= (0, 0, 1, 0, 0),\\[3pt]
\beta_4 &= (0, 0, 0, 1, 0),\\[3pt]
\beta_5 &= (0, 0, 0, 0, 1).
\end{align*}
The generalized eta-quotient corresponding to $\alpha_1$ is
\begin{align}\label{h-over}
h=\frac{z_1^2z_3^3z_4^3}{z^6z_2^4}=\frac{\eta^{11}(\tau)
\eta_{5,1}^{12}(\tau)\eta^{15}(10\tau)}
  {\eta^{7}(2\tau)\eta^{19}(5\tau)\eta_{10,1}^{14}(\tau)}
\end{align}
and $hF$ has a pole only at infinity.
Consider a different solution $\mu=\alpha_1+2\beta_2=(6,4,-4,3,3)$ of \eqref{h-5n2},
we get a generalized eta-quotient
\begin{align}\label{ex-another-choice}
  h'=\frac{z_1^4z_3^3z_4^3}{z^6z_2^{4}}=
  \frac{\eta^{9}(\tau)\eta_{5,1}^{16}(\tau)\eta^{11}(10\tau)}
  {\eta^{3}(2\tau)\eta^{17}(5\tau)\eta_{10,1}^{22}(\tau)}
\end{align}
and $h'F$ has a pole only at infinity.
The orders of $hF$ and $h'F$ at infinity are $-3$ and $-7$, respectively.
As will be seen in the next section,
the Ramanujan-type identity derived from $hF$ takes a simpler
form than that derived from $h'F$.

\section{Ramanujan-Type Identities}\label{R-T-identities}

Given a partition function $a(n)$ as defined by \eqref{radu-gf},
and integers $m$ and $t$,
let
\begin{align}\label{F_expression-1}
F(\tau) = \phi(\tau)\,g_{m,t}(\tau)
\end{align}
be a modular function as given in \eqref{F_expression},
where $\phi(\tau)$ is a generalized eta-quotient of the form \eqref{phi-form},
and
\begin{align*}
g_{m,t}(\tau)  = q^{\frac{t-\ell}{m}}\sum\limits_{n=0}^\infty a(mn+t)q^n,
\end{align*}
as given in \eqref{gmt}.

Assume that we have found  a generalized eta-quotient $h(\tau)$ such that $hF$ has a pole only at infinity.
In Sect. \ref{Radu-AB}, we derived  a modular function
$z\in \left\langle GE^\infty(N)\right\rangle_{\mathbb{Q}}$
and a $z$-reduced sequence $e_1,\ldots,e_w$ such that
\begin{align*}
\left\langle GE^\infty(N)\right\rangle_{\mathbb{Q}} = \mathbb{Q}[z]+\mathbb{Q}[z] e_1+\cdots +\mathbb{Q}[z]e_w.
\end{align*}
In this section,
we aim to derive an expression for $hF$ in terms of $z$ and the module basis
$1, e_1,\ldots, e_w$.
This leads to a Ramanujan-type identity for $a(mn+t)$.

We first adapt Radu's Algorithm MC,
original designed for $\Gamma_0(N)$,
to $\Gamma_1(N)$,
and apply it to $hF$, $z$ and $e_1,\ldots,e_w$ to determine
whether $hF$ belongs to $\left\langle GE^\infty(N)\right\rangle_{\mathbb{Q}}$.
By Radu \cite[Lemma~5]{Radu-2015},
the Algorithm MC   requires the non-positive parts of the $q$-expansion of
$hF$,
and finite parts of the $q$-expansions of $z$, and $e_1,\ldots,e_w$.
More precisely,
by \eqref{F_expression-1},
the non-positive parts of the $q$-expansion of $hF$ can be computed
via the generating function \eqref{radu-gf} of $a(n)$
and the $q$-expansions of $h(\tau)$ and $\phi(\tau)$.
If the algorithm confirms that
$hF\in \left\langle GE^\infty(N)\right\rangle_{\mathbb{Q}}$,
then we may utilize the $\Gamma_1(N)$ version of Algorithm MW
to express $hF$ as
\begin{align}\label{hf-expression}
  hF = p_0(z) +p_{1}(z) e_{1} + \cdots +p_w(z) e_w,
\end{align}
where $p_i(z)\in \mathbb{Q}[z]$ for $0\leq i\leq w$.

To this end,
we first utilize the Radu's Algorithm MC for $\Gamma_1(N)$
to determine whether $hF$ belongs to
$\left\langle GE^\infty(N)\right\rangle_{\mathbb{Q}}$.
Once we have confirmed that
$hF\in \left\langle GE^\infty(N)\right\rangle_{\mathbb{Q}}$,
we may utilize the Algorithm MW of Radu for $\Gamma_1(N)$
to derive a Ramanujan-type identity for $a(mn+t)$.

We now give an algorithmic derivation of the Ramanujan-type identity for
$\overline{p}(5n+2)$,  as stated in Theorem \ref{thm-over-eq}.

For $F$, $z$ and $h$ given in \eqref{Ftau-Q31}, \eqref{generator_10} and \eqref{h-over},
we have
\begin{align*}
  hF & = \frac{4}{q^3}+\frac{28}{q^2}+\frac{56}{q}+140+O(q),\\[6pt]
  z  & =\frac{1}{q}+2+2 q+q^2+O(q^3).
\end{align*}
Applying Radu's Algorithm MC to $hF$ and $z$,
we deduce that
$$hF\in \left\langle GE^\infty(10)\right\rangle_{\mathbb{Q}}.$$
With the input $hF$ and $z$, the Algorithm MW
yields
\begin{align}\label{simple-over5n2}
hF=4z^3+4z^2-32z+32.
\end{align}
Substituting $F$, $z$ and $h$ into \eqref{simple-over5n2},
we obtain the Ramanujan-type identity in Theorem \ref{thm-over-eq}.
However,
if we take $h'$ as given in \eqref{ex-another-choice},
then we get
\begin{align}\label{ex-complex}
  h'F
  =4 z^{7}-4 z^{6}-44 z^{5}+100z^{4}-20z^{3}-92z^{2}+32z+32.
\end{align}

In the same vain,
we obtain a Ramanujan-type identity for $\overline{p}(5n+3)$.

\begin{thm}\label{thm-over-eq2}
We have
\begin{align*}
  y\sum_{n=0}^{\infty}\overline{p}(5n+3)q^n
  =8 z^3-12 z^2+16 z-16,
\end{align*}
where $z$ is given in Theorem \ref{thm-over-eq} and
\begin{align*}
 y &=\frac{
  (q;q)_{\infty}^{12}(q^5;q^5)_{\infty}^{12}(q,q^9;q^{10})_{\infty}^{2}
(q^4,q^6;q^{10})_{\infty}^8}
{q^3(q^2;q^2)_{\infty}^{7}(q,q^4;q^5)_{\infty}^{6}(q^{10};q^{10})_{\infty}^{16}
(q^5;q^{10})_{\infty}^{14}}.
\end{align*}
\end{thm}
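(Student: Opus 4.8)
The plan is to run the algorithm of Sections \ref{construction}--\ref{R-T-identities} with $m=5$ and $t=3$, reusing essentially all of the machinery already assembled for $\overline{p}(5n+2)$. First I would check that $N=10$ satisfies the conditions \nameref{con_N_1}--\nameref{con_N_10} for the overpartition generating function $\frac{(q^2;q^2)_\infty}{(q;q)_\infty^2}$ (so $M=2$, $r_1=-2$, $r_2=1$, and $\ell=0$) together with $m=5$, $t=3$. Only the conditions involving $t$, namely \nameref{con_N_8} and the congruence in \nameref{con_N_10}, differ from the $\overline{p}(5n+2)$ case, so only those need rechecking. Then I would apply Theorem \ref{con_F_modular_function}: solve the linear system (1)--(4) of that theorem for $\Gamma_1(10)$ with the data of $\overline{p}(5n+3)$ — the analogue of the system \eqref{overpartition_ex} — and any integer solution $(a_\delta,a_{\delta,g})$ yields a generalized eta-quotient $\phi(\tau)$ for which
$F(\tau)=q^{3/5}\,\phi(\tau)\sum_{n\ge 0}\overline{p}(5n+3)q^n$
is a modular function for $\Gamma_1(10)$.

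Next I would reuse the generators $z,z_1,z_2,z_3,z_4$ of $GE^\infty(10)$ from \eqref{generator_10}, together with the fact, already established via Radu's Algorithm AB in \eqref{GE10}, that $\langle GE^\infty(10)\rangle_{\mathbb{Q}}=\mathbb{Q}[z]$; so Steps~2 and~3 of the sketch require no new work. Using Theorem \ref{order_F-1} together with the width formula \eqref{w-calcu}, I would compute the lower bounds $d_i=w_{\alpha_i}\bigl(p(\alpha_i)+p^*(\alpha_i)\bigr)$ for $\mathrm{ord}_{s_i}(F)$ at the seven finite cusps of $\mathcal{S}(10)$ listed in \eqref{S_10}, assemble the Diophantine system \eqref{h-exist-1}, and — guided by Theorem \ref{sol-best-h} — select the integer solution $(t_1,\dots,t_5)$ that maximizes $\mathrm{ord}_\infty(hF)$. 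This produces a generalized eta-quotient $h=\prod_j z_j^{t_j}$ for which $hF$ is a modular function with a pole only at infinity.

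Finally I would compute the non-positive part of the $q$-expansion of $hF$ from the product formula for $\overline{p}(n)$ and the $q$-expansions of $\phi$ and $h$, and a few terms of the $q$-expansion of $z$, feed these into the $\Gamma_1(N)$-adaptation of Radu's Algorithm MC to confirm $hF\in\langle GE^\infty(10)\rangle_{\mathbb{Q}}$, and then into Algorithm MW to obtain $hF=8z^3-12z^2+16z-16$. Clearing $h$ and $\phi$ from this relation and rewriting $q^{3/5}\phi(\tau)h(\tau)$ as a $q$-prefactored eta/infinite-product expression via \eqref{gen} and \eqref{ge-e} yields the stated identity with $y=q^{3/5}\phi(\tau)h(\tau)$.

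The main obstacle is not a single deep step but the fact, emphasized in the introduction, that the algorithm is not guaranteed to terminate with $hF$ inside the finitely generated module; the real content is that, for these particular data, the order estimates of Theorem \ref{order_F-1} are sharp enough that $hF$ actually lands in $\mathbb{Q}[z]$ and Algorithm MC/MW succeeds. Concretely, the two places that could fail are (i) finding an \emph{integer} — not merely rational — solution of the system (1)--(4) in Theorem \ref{con_F_modular_function}, since condition (4) is a Jacobi-symbol constraint tied to the parities of the exponents, and (ii) controlling $|\mathrm{ord}_\infty(hF)|$ so that the resulting polynomial in $z$ has low degree. Point (ii) is handled by Theorem \ref{sol-best-h}, while (i) is a finite search that, just as for $\overline{p}(5n+2)$, does admit a solution here; once both are resolved the remaining computations are routine.
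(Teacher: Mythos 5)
Your proposal is correct and follows exactly the route the paper takes: the paper gives no separate proof of this theorem but states it is obtained ``in the same vein'' as Theorem \ref{thm-over-eq}, i.e.\ by rerunning Steps 1--5 of the algorithm with $t=3$, reusing $N=10$, the generators \eqref{generator_10}, and the module structure \eqref{GE10}, exactly as you describe. Your identification of the two genuinely nontrivial points --- the integer solvability of conditions (1)--(4) of Theorem \ref{con_F_modular_function} and the membership $hF\in\mathbb{Q}[z]$ certified by Algorithm MC/MW --- matches where the actual content lies.
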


Notice that Theorem \ref{thm-over-eq} and Theorem \ref{thm-over-eq2} can be considered as witness identities for the following congruences of Hirschhorn and Sellers \cite{Hirschhorn-Sellers}:
\begin{align*}
  \overline{p}(5n+2)&\equiv 0 \pmod{4},\\[6pt]
  \overline{p}(5n+3)&\equiv 0 \pmod{4}.
\end{align*}

\section{A Witness Identity for $p(11n+6)$}\label{app_partition}

In this section,
we demonstrate how our algorithm gives rise to a witness identity for $p(11n+6)$.
We begin with an overview of the witness identities due to
Bilgici and Ekin \cite{Bilgici-Ekin-2014},
Radu \cite{Radu-2015} and Hemmecke \cite{Hemmecke-2018}.
Bilgici and Ekin \cite{Bilgici-Ekin-2014} used the method
of Kolberg to deduce the generating functions of $p(11n+t)$
for all $0\leq t \leq 10$. In particular, they obtained the following witness
identity:
\begin{align}\label{B-E-11n+6}
\sum_{n=0}^\infty p(11n+6)q^{n} &= 11x
(-x_{1}^{3} x_{4}-x_{2}^{3} x_{5}-x_{4}^{3} x_{2}-x_{3}^{3} x_{1}-x_{5}^{3} x_{3}-14 x_{1}^{2}x_{4} \notag\\
&\qquad-14 x_{2}^{2} x_{5}-14 x_{4}^{2} x_{2}-14 x_{3}^{2} x_{1}-14 x_{5}^{2} x_{3}-29 x_{1} x_{4}\notag\\[6pt]
&\qquad \left.-29 x_{2}x_{5}-29 x_{2} x_{4}-29 x_{1} x_{3}-29 x_{3} x_{5}+106\right),
\end{align}
where
\begin{align*}
x&= \frac{q^{4}(q^{11}; q^{11})_\infty^{11}}
{(q; q)_\infty^{12}},\\[6pt]
x_1 &= -\frac{(q^4,q^7; q^{11})_\infty^2 (q,q^{10}; q^{11})_\infty}{(q^2,q^9; q^{11})_\infty^2 (q^5,q^{6}; q^{11})_\infty}\\[6pt]
x_2 &= -\frac{(q^2,q^9; q^{11})_\infty^2 (q^5,q^{6}; q^{11})_\infty}{q(q,q^{10}; q^{11})_\infty^2 (q^3,q^{8}; q^{11})_\infty},\\[6pt]
x_3 &= \frac{q^2(q,q^{10}; q^{11})_\infty^2 (q^3,q^{8}; q^{11})_\infty}{(q^4,q^7; q^{11})_\infty (q^5,q^{6}; q^{11})_\infty^2},\\[6pt]
x_4 &= \frac{(q^4,q^7; q^{11})_\infty (q^5,q^{6}; q^{11})_\infty^2}{q(q^2,q^9; q^{11})_\infty (q^3,q^{8}; q^{11})_\infty^2},\\[6pt]
x_5 &= -\frac{(q^2,q^9; q^{11})_\infty (q^3,q^{8}; q^{11})_\infty^2}{(q^4,q^7; q^{11})_\infty^2 (q,q^{10}; q^{11})_\infty}.
\end{align*}
Using the Ramanujan--Kolberg algorithm,
Radu \cite{Radu-2015} derived a witness identity for $p(11n+6)$.
A set  $\{M_1, M_2, \ldots, M_7\}$ of generators of
$E^\infty(22)$ can be found in \cite{Radu-2015}.
For example,
\begin{align*}
  M_1 = \frac{\eta^7(\tau)\eta^3(11\tau)}{\eta^3(2\tau)\eta^7(22\tau)}.
\end{align*}
Let
\begin{align*}
  F =\frac{(q;q)^{10}_\infty(q^2;q^2)^{2}_\infty(q^{11};q^{11})^{11}_\infty}
  {q^{14}(q^{22};q^{22})^{22}_\infty}\sum_{n=0}^\infty p(11n+6)q^n.
\end{align*}
Radu showed that
\begin{align}\label{radu}
F &=11(98t^4+1263t^3+2877t^2+1019t-1997)\nonumber\\[6pt]
  &\qquad +11z_1(17t^2+490t^2+54t-871)\nonumber\\[6pt]
  &\qquad
  +11z_2(t^3+251t^2+488t-614),
\end{align}
where
\begin{align*}
  t&=\frac{3}{88}M_1+\frac{1}{11}M_2-\frac{1}{8}M_4,\\[6pt]
  z_1&=-\frac{5}{88}M_1+\frac{2}{11}M_2-\frac{1}{8}M_4-3,\\[6pt]
  z_2&=\frac{1}{44}M_1-\frac{3}{11}M_2+\frac{5}{4}M_4.
\end{align*}
Noting that $(1-q^n)^{11}\equiv 1-q^{11n} \pmod {11}$
and $(1-q^n)^{8}\equiv (1-q^{2n})^4 \pmod {8}$, we see that
\eqref{radu} implies the Ramanujan congruence for $p(11n+6)$.
Hemmecke \cite{Hemmecke-2018} generalized Radu's algorithm and derived the following witness identity:
\begin{align}\label{Hem}
  F & = 11^2\cdot 3068 M_7 + 11^2\cdot (3M_1+ 4236)M_6 \nonumber\\[6pt]
  &\qquad+ 11\cdot(285M_1+11\cdot 5972)M_5 + 11(1867M_1+11\cdot 2476)M_2\nonumber\\[6pt]
  &\qquad - \frac{11}{8}(M_1^3 + 1011\, M_1^2+11\cdot 6588M_1 + 11^2\cdot 10880)\nonumber\\[6pt]
  &\qquad+ \frac{11}{8}(M_1^2+11\cdot 4497\,M_1 + 11^2 \cdot3156)M_4.
\end{align}

We are now ready to give an algorithmic derivation of the identity for $p(11n+6)$ as stated in Theorem \ref{thm-p(11n+6)}.

\begin{proof}[Proof of Theorem \ref{thm-p(11n+6)}]
Notice that $N=11$ satisfies all the conditions \nameref{con_N_1}--\nameref{con_N_10}.
We proceed with the following steps.
\begin{description}
 \setlength{\parskip}{2ex}
\item[Step 1] By Theorem \ref{con_F_modular_function},
we find that
\[F(\tau) = q (q^{11}; q^{11})_\infty\,
\sum\limits_{n=0}^\infty p(11n+6)q^n\]
is a modular function for $\Gamma_1(11)$.

\item[Step 2]
Solving the system of
Diophantine inequalities \eqref{linear_inequalities} for $N = 11$,
we obtain a set of 27 generators of $GE^{\infty}(11)$ including
$z$ and $e$ as given in \eqref{z} and \eqref{e}.

\item[Step 3]
Applying Radu's Algorithm AB, we deduce that
$$\langle GE^\infty(11)\rangle_\mathbb{Q}=\langle 1,e\rangle_{\mathbb{Q}[z]}.$$

\item[Step 4]
By virtue of Theorem \ref{order_F-1} and Theorem \ref{sol-best-h},
we get
$$h=\frac{\eta^{24}(\tau)}{\eta^{24}(11\tau)
\eta_{11,1}^{28}(\tau)\eta_{11,2}^{16}(\tau)\eta_{11,3}^{12}(\tau)
\eta_{11,4}^{4}(\tau)}$$
for which  $hF$ has a pole only at infinity.

\item[Step 5]
Employing Radu's Algorithm MC and Algorithm MW,
we deduce that $hF\in \langle GE^\infty(11)\rangle_{\mathbb{Q}}$ and
\begin{eqnarray*}
\begin{split}
hF
&=11  z^{10}+121   z^{8}e+330   z^{9}-484   z^7e-990   z^{8}+484   z^6 e+792   z^7\\[6pt]
&\qquad-484   z^5e+44   z^6+1089   z^4 e-132   z^5-1452   z^3e-451   z^4\\[6pt]
&\qquad+968   z^2e+748   z^3-242   ze-429   z^2+77   z+11.
\end{split}
\end{eqnarray*}
\end{description}
This completes the proof.
\end{proof}

\section{Further Examples}\label{app_further}

In this section,
we derive Ramanujan-type identities on
the broken $2$-diamond partition function.
The notion of the broken $k$-diamond partitions was introduced by Andrews and Paule \cite{Andrews-Paule-2007} in their study of MacMahon's partition analysis.
The number of broken $k$-diamond partitions of $n$ is
denoted by $\Delta_k(n)$. They showed that the generating function of $\Delta_k(n)$ is given by
\begin{align*}
\sum_{n=0}^\infty\Delta_k(n)q^n
=\frac{(q^2;q^2)_\infty(q^{2k+1};q^{2k+1})_\infty}
{(q;q)^3_\infty(q^{4k+2};q^{4k+2})_\infty}.
\end{align*}
Andrews and Paule conjectured that
\begin{align}\label{Dia-con-1}
\Delta_2(25n+14)\equiv 0\pmod5.
\end{align}
Chan \cite{Chan-2008} proved this conjecture and also showed that
\begin{align}\label{Dia-con-2}
\Delta_2(25n+24)\equiv 0\pmod5.
\end{align}
Define $a(n)$ by
\[\sum_{n=0}^\infty a(n)q^n= \frac{(q;q)^2_\infty(q^2;q^2)_\infty}{(q^{10};q^{10})_\infty}.\]
Since $(1-q^n)^5\equiv 1-q^{5n} \pmod{5}$, we see that
$\Delta_2(n)\equiv a(n) \pmod 5$.
By the Ramanujan--Kolberg algorithm,
Radu \cite{Radu-2015} obtained the following identity:
\begin{align}\label{a-RK}
  \frac{(q^2;q^2)^{12}_\infty(q^5;q^5)^{10}_\infty}{q^4(q;q)^{6}_\infty(q^{10};q^{10})^{20}_\infty}
  &\left(\sum_{n=0}^{\infty} a(25 n+14) q^{n}\right)\left(\sum_{n=0}^{\infty} a(25 n+24) q^{n}\right)\notag\\[6pt]
  &\qquad=
  25\left(2 t^{4}+28 t^{3}+155 t^{2}+400 t+400\right),
\end{align}
where
\begin{align*}
t=\frac{(q;q)^3_{\infty } (q^5;q^5)_{\infty }}{q (q^2;q^2)_{\infty } (q^{10};q^{10})_{\infty }^3}.
\end{align*}
The congruences \eqref{Dia-con-1} and \eqref{Dia-con-2} are easy consequences of \eqref{a-RK}.
Let
\begin{align*}
  z=\frac{(q^2;q^2)_{\infty } (q^5;q^5)_{\infty }^5}{q (q;q)_{\infty } (q^{10};q^{10})_{\infty }^5}.
\end{align*}
Using the package \texttt{RaduRK}, Smoot \cite{Smoot-2019} deduced
that
\begin{align*}
  \frac{(q;q)^{126}_{\infty } (q^5;q^5)^{70}_{\infty }}{q^{58} (q^2;q^2)_{\infty }^2 (q^{10};q^{10})_{\infty }^{190}}
  \left(\sum_{n=0}^\infty\Delta_2(25n+14)q^n\right)
  \left(\sum_{n=0}^\infty\Delta_2(25n+24)q^n\right)
\end{align*}
is a polynomial in $z$ of degree $58$ with integer coefficients  divisible by 25.
It is not hard to see that the above relation implies
 the congruences \eqref{Dia-con-1} and \eqref{Dia-con-2}.

Our algorithm provides the following witness identities  for $\Delta_2(25n+14)$
and $\Delta_2(25n+24)$.

\begin{thm}\label{diamond-thm}
Let
\begin{align*}
z=\frac{(q;q)_\infty(q^5;q^5)_\infty}
{q(q,q^4;q^5)_\infty^2(q^{10};q^{10})_\infty^2
(q,q^9;q^{10})_\infty}.
\end{align*}
Then
\begin{align}\label{diamond-eq1}
\frac
{(q;q)_\infty^{92}(q^5;q^5)_\infty^{14}(q,q^4;q^5)_\infty^{52}
(q^4,q^6;q^{10})_\infty^{4}}
{q^{57}(q^2;q^2)_\infty^{58}(q^{10};q^{10})_\infty^{46}(q,q^9;q^{10})_\infty^{109}
(q^5;q^{10})_\infty^{10}}\,
\sum_{n=0}^\infty \Delta_{2}(25n+14)q^n
\end{align}
and
\begin{align}\label{diamond-eq2}
  \frac{(q;q)_\infty^{92}(q,q^4;q^5)_\infty^{62}(q^5;q^{10})_\infty^{6}}
  {q^{57}(q^2;q^2)_\infty^{59}(q^5;q^5)_\infty^{2}(q^{10};q^{10})_\infty^{29}
(q,q^9;q^{10})_\infty^{119}(q^4,q^6;q^{10})_\infty^{4}}\sum_{n=0}^\infty \Delta_{2}(25n+24)q^n
\end{align}
are both polynomials in $z$ of degree 57 with integer coefficients divisible by $5$.
\end{thm}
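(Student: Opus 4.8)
The plan is to run the five-step algorithm of Section~\ref{Sketch} on the broken $2$-diamond function itself, which fits the framework \eqref{radu-gf} with $M=10$ and $(r_1,r_2,r_5,r_{10})=(-3,1,1,-1)$, taking $m=25$ and $t=14$, and then separately $t=24$. Here $\ell=-\tfrac{1}{24}\sum_{\delta|M}\delta r_\delta=\tfrac14$, so in the notation \eqref{gmt} one has $g_{25,t}(\tau)=q^{(4t-1)/100}\sum_{n\ge0}\Delta_2(25n+t)q^n$. First I would verify that $N=10$ already satisfies all of the conditions \nameref{con_N_1}--\nameref{con_N_10}: indeed $10=M\mid N$, the only prime dividing $m=25$ is $5\mid10$, condition~\nameref{con_N_8} reduces to $5\mid N$, conditions~\nameref{con_N_6}, \nameref{con_N_7} and \nameref{con_N_9} hold automatically (note $\prod_{\delta|M}\delta^{|r_\delta|}=100=2^2\cdot25$ and $2\nmid m$), and condition~\nameref{con_N_10} reduces to the requirement that every $s\in\mathbb S_{6000}$ satisfy $s\equiv1\pmod{20}$, which is true because $j\equiv1\pmod{10}$ forces $j^2\equiv1\pmod{20}$. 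With $N=10$, Theorem~\ref{con_F_modular_function} then produces a generalized eta-quotient $\phi_t(\tau)$ for which $F_t(\tau)=\phi_t(\tau)\,g_{25,t}(\tau)$ is a modular function for $\Gamma_1(10)$; this is a matter of solving the finite linear and congruence system (1)--(4) of that theorem for integer exponents $a_\delta,a_{\delta,g}$.

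Because $N=10$, Steps~2 and~3 have in effect already been carried out in Section~\ref{generator}: by Theorem~\ref{determine-GE} and the Diophantine system \eqref{linear_inequalities}, $GE^\infty(10)$ is generated by $z,z_1,z_2,z_3,z_4$ of \eqref{generator_10}, and Radu's Algorithm~AB gives $\langle GE^\infty(10)\rangle_{\mathbb Q}=\mathbb Q[z]$ as in \eqref{GE10}, with $z$ exactly the function displayed in Theorem~\ref{diamond-thm}. For Step~4 I would use Theorem~\ref{order_F-1} to get lower bounds $d_i$ for $\mathrm{ord}_{s_i}(F_t)$ at the eight cusps of $\Gamma_1(10)$, assemble the system \eqref{h-exist-1}, and invoke Theorem~\ref{sol-best-h} to pick the integer solution that maximizes $\mathrm{ord}_\infty(h_tF_t)$; this yields a generalized eta-quotient $h_t=\prod_j z_j^{t_j}$ with $h_tF_t$ holomorphic away from $\infty$. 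Translating back through \eqref{ge-e} into ordinary products $(q^\delta;q^\delta)_\infty$ and partial products $(q^g,q^{\delta-g};q^\delta)_\infty$ with $\delta|10$, the function $q^{(1/4-t)/25}h_t(\tau)\phi_t(\tau)$ should be exactly the eta-quotient multiplying $\sum_n\Delta_2(25n+t)q^n$ in \eqref{diamond-eq1} for $t=14$ and in \eqref{diamond-eq2} for $t=24$. Finally, in Step~5 I would feed Radu's Algorithm~MC for $\Gamma_1(10)$ the non-positive part of the $q$-expansion of $h_tF_t$ (obtained from the product \eqref{radu-gf} for $\Delta_2$ and the $q$-expansions of $h_t$ and $\phi_t$) together with enough of the $q$-expansion of $z$ to conclude $h_tF_t\in\mathbb Q[z]$, and then Algorithm~MW to obtain the explicit identity $h_tF_t=P_t(z)$ with $P_t\in\mathbb Z[z]$; one then reads off $\deg P_t=57$, and a direct look at the computed coefficients shows each is divisible by~$5$. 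Rephrasing $h_tF_t=P_t(z)$ as an eta-quotient identity is precisely Theorem~\ref{diamond-thm}, and the congruences \eqref{Dia-con-1} and \eqref{Dia-con-2} fall out immediately: divisibility of every coefficient of $P_t$ by~$5$, together with the fact that $z=q^{-1}(1+O(q))$ has integer $q$-coefficients, forces $\Delta_2(25n+t)\equiv0\pmod5$.

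The steps that are real work rather than routine bookkeeping are the following. Since the algorithm carries no guarantee of success, the decisive point is Step~5: a priori $F_t$ could have a pole at a cusp of $\Gamma_1(10)$ inequivalent to $\infty$ that is not cancelled by any monomial in $z_1,\dots,z_4$, or $h_tF_t$ might simply fail to lie in $\mathbb Q[z]$, and only running Algorithm~MC actually settles this. I also expect the sheer size of the computation to be an obstacle: $h_tF_t$ has a pole of order $57$ at $\infty$, so Algorithm~MW needs on the order of $57$ coefficients of $\sum_n\Delta_2(25n+t)q^n$, i.e.\ values of $\Delta_2$ out to roughly $q^{25\cdot57+t}$, and the bounds from Theorem~\ref{order_F-1} must be computed sharply enough at all eight cusps for \eqref{h-exist-1} to possess the extremal solution wanted in Theorem~\ref{sol-best-h}. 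A last, minor check is that, after clearing denominators via \eqref{ge-e}, the eta-quotient $q^{(1/4-t)/25}h_t\phi_t$ coincides with the displayed prefactor and is not merely equal to it up to a unit generalized eta-quotient of the kind (equal to $1$) arising from $\beta_1,\dots,\beta_6$ in Section~\ref{generator}.
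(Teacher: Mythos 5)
Your proposal is correct and is essentially the paper's own (implicit) proof: the authors present Theorem~\ref{diamond-thm} as a direct output of running the five-step algorithm of Section~\ref{Sketch} on $\Delta_2$ with $M=10$, $(r_1,r_2,r_5,r_{10})=(-3,1,1,-1)$, $m=25$, $t\in\{14,24\}$ and $N=10$, reusing the generators of $GE^\infty(10)$ and the module basis $\langle GE^\infty(10)\rangle_{\mathbb Q}=\mathbb Q[z]$ exactly as you describe. The only slip is the sign in the exponent of the power of $q$ relating $h_tF_t$ to the displayed prefactor (it should be $q^{(t-1/4)/25}h_t\phi_t$, not $q^{(1/4-t)/25}h_t\phi_t$), which does not affect the argument.
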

More precisely,  \eqref{diamond-eq1} equals
\begin{align*}
&\,\; 10445 z^{57}+65072505 z^{56}+29885191700 z^{55}+2909565072375 z^{54}\\[6pt]
&\quad+58232762317950 z^{53}-771909964270635 z^{52}-8976196273201590 z^{51}\\[6pt]
&\quad+168096305999838525 z^{50}-552704071429548750 z^{49}\\[6pt]
&\quad-6285133254753356625 z^{48}+76077164750182724400 z^{47}\\[6pt]
&\quad-350853605818104040400 z^{46}+430844106211910184000 z^{45}\\[6pt]
&\quad+4332665789140456020000 z^{44}-31965516977695010144000 z^{43}\\[6pt]
&\quad+116598487085627561478400 z^{42}-254498980254624708134400 z^{41}\\[6pt]
&\quad+226239786150985106784000 z^{40}+630144010340120712320000 z^{39}\\[6pt]
&\quad-3270835930300215379968000 z^{38}+7873377561448743273881600 z^{37}\\[6pt]
&\quad-12188753588700934348185600 z^{36}+11409105186984502777856000 z^{35}\\[6pt]
&\quad-1853370295840331059200000 z^{34}-12922596637778941349888000 z^{33}\\[6pt]
&\quad+19993842975085327602810880 z^{32}-4136695001339260651438080 z^{31}\\[6pt]
&\quad-40585258593920366687027200 z^{30}+107607975413970670190592000 z^{29}\\[6pt]
&\quad-189170246667253453894451200 z^{28}+290673733377906514130370560 z^{27}\\[6pt]
&\quad-429481500981884772899880960 z^{26}+614653426107799377123737600 z^{25}\\[6pt]
&\quad-825958110337598656348160000 z^{24}+1014095417844181497806848000 z^{23}\\[6pt]
&\quad-1125028176866670548300595200 z^{22}+1129311459482608004707123200 z^{21}\\[6pt]
&\quad-1033623338399676468559872000 z^{20}+869136778177466010173440000 z^{19}\\[6pt]
&\quad-672028063551221072396288000 z^{18}+473438441949368700161228800 z^{17}\\[6pt]
&\quad-299190013959544777788620800 z^{16}+167798468337926970277888000 z^{15}\\[6pt]
&\quad-84223564508812395151360000 z^{14}+39006701101726128144384000 z^{13}\\[6pt]
&\quad-16949659707832925998284800 z^{12}+6525804102142065953996800 z^{11}\\[6pt]
&\quad-1953358789335809261568000 z^{10}+408567853900785254400000 z^9\\[6pt]
&\quad-90672379909684330496000 z^8+43132985715615837716480 z^7\\[6pt]
&\quad-13837533253868380487680 z^6+78654993658072268800 z^5\\[6pt]
&\quad+776840149395832832000 z^4-482905506919219200 z^3\\[6pt]
&\quad-31960428074332323840 z^2-1612499772831170560 z\\[6pt]
&\quad-7036874417766400.
\end{align*}
The explicit  expression for  \eqref{diamond-eq2} is omitted.

We end this section by  noting that
our algorithmic approach can  be used to derive dissection formulas on  quotients in the form of \eqref{radu-gf}, that is,
\begin{align}\label{quo-radu}
  \prod_{\delta | M}
(q^\delta;q^\delta)^{r_\delta}_\infty,
\end{align}
where $M$ is a positive integer and $r_\delta$, $r_{\delta, g}$ are integers.
Let $a(n)$ be the partition function defined by \eqref{radu-gf},
and let $m$ be a positive integer. If our algorithm can be utilized to
find a formula for the generating function of $a(mn+t)$ for each $0\leq t \leq m-1$,
then we are led to an $m$-dissection formula on the quotient  \eqref{quo-radu}.
For example,
the algorithm is valid to produce the 5-dissection formulas
for $(q; q)_\infty$ and $\frac{1}{(q;q)_\infty}$, see
Berndt \cite[p. 165]{Berndt-book}.

\section{More General Partition Functions}\label{tran-law}

While many partition functions $a(n)$ are of the form \eqref{radu-gf},
there are  partition functions
that do not seem to fall into this framework, such as
Andrews' $(k,i)$-singular overpartition function $\overline{Q}_{k, i}(n)$.
Andrews \cite{Andrews-2015} derived the generating function
\begin{align}\label{asop}
\sum\limits_{n=0}^\infty \overline{Q}_{k, i}(n)q^n
= \frac{(q^k, -q^i, -q^{k-i}; q^k)_\infty}{(q; q)_\infty}.
\end{align}
In general, it is not always the case that
a quotient on the right hand side of \eqref{asop}
can be expressed in the form of \eqref{radu-gf}.

The objective of this section is to extend our algorithm
to  partition functions $b(n)$ defined by
\begin{align}\label{def-1}
\sum_{n=0}^{\infty}
b(n)q^n=\prod_{\delta | M}
(q^\delta;q^\delta)^{r_\delta}_\infty
\prod_{\delta|M\atop 0<g<\delta}
(q^g,q^{\delta-g};q^\delta)^{r_{\delta,g}}_\infty,
\end{align}
where $M$ is a positive integer and
$r_\delta$, $r_{\delta, g}$ are integers.
In fact,
for any $k$ and $1\leq i< \frac{k}{2}$,
\eqref{asop} can be written in the form of \eqref{def-1}:
\begin{align}\label{asop-1}
\sum\limits_{n=0}^\infty \overline{Q}_{k, i}(n)q^n
= \frac{(q^k; q^k)_\infty(q^{2i}, q^{2k-2i}; q^{2k})_\infty}{(q; q)_\infty
(q^i, q^{k-i}; q^k)_\infty},
\end{align}
where  $M=2k$,
\begin{align*}
r_\delta =
\begin{cases}
-1, &\delta = 1,\\
1, & \delta=k,\\
0, & \text{otherwise},
\end{cases}
\quad
\text{and}
\quad
r_{\delta, g} =
\begin{cases}
-1, &\delta=k, g=i,\\
1, & \delta=2k, g=2i,\\
0, & \text{otherwise}.
\end{cases}
\end{align*}

Analogous to the generating function
$g_{m,t}(\tau)$ in Sect. \ref{construction} as given by Radu \cite{Radu-2009},
we adopt the same notation for the generating function of $b(mn+t)$:
\begin{align}\label{gmt-2}
g_{m,t}(\tau)  = q^{\frac{t-\ell}{m}}\sum\limits_{n=0}^\infty b(mn+t)q^n,
\end{align}
where
\[\ell=-\frac{1}{24}\sum\limits_{\delta|M}\delta r_\delta-\sum\limits_{\delta|M \atop 0 < g<\delta}\frac{\delta}{2}P_2\left(\frac{g}{\delta}\right)r_{\delta, g}.\]
As before,
\[P_2(t)=\{t\}^2-\{t\}+\frac{1}{6},\]
and $\{t\}$ is the fractional part of $t$.

To derive a Ramanujan-type identity for $b(mn+t)$,
we follow the same procedure as  in Sect. \ref{Sketch}.
There are only a few modifications that should be taken into account
in order to
extend Theorem \ref{con_F_modular_function} and Theorem \ref{order_F-1}
to the generating function $g_{m,t}(\tau)$ in  \eqref{gmt-2}.
The proofs are similar to those of Theorem \ref{con_F_modular_function} and Theorem \ref{order_F-1} and hence are omitted.

Let $\phi(\tau)$ be a generalized eta-quotient and $F = \phi(\tau) g_{m,t}(\tau)$.
Similar to Theorems \ref{con_F_modular_function},
we give a criterion for $F(\tau)$ to be a modular function for $\Gamma_1(N)$.
Let $\kappa=\gcd(m^2-1, 24)$.
First, we assume that $N$ satisfies the following conditions:
\begin{description}\label{con-2}
\setlength{\parskip}{2ex}
  \item[{1}\label{con-2_N_1}] $M| N$.

  \item[{2}\label{con-2_N_2}] $p| N$ for any prime $p| m$.

  \item[{3}\label{con-2_N_3}] $\kappa N\sum\limits_{{\delta|M \atop 0<g<\delta}}\frac{g}{\delta}r_{\delta, g} \equiv 0 \pmod{2}$.

  \item[{4}\label{con-2_N_4}] $\kappa N\sum\limits_{{\delta|M\atop 0<g<\delta}}r_{\delta, g}\equiv 0 \pmod{4}$.

  \item[{5}\label{con-2_N_5}] $\kappa mN^2\sum\limits_{\delta|M\atop 0<g<\delta}\frac{r_{\delta, g}}{\delta} \equiv 0\pmod{12}$.

  \item[{6}\label{con-2_N_6}] $\kappa N\sum\limits_{\delta|M}r_\delta \equiv 0 \pmod{8}$.

  \item[{7}\label{con-2_N_7}] $\kappa mN^2\sum\limits_{\delta|M}\frac{r_\delta}{\delta}\equiv 0 \pmod{24}$.

  \item[{8}\label{con-2_N_8}] $\frac{24mM}{\gcd(\kappa \alpha(t), 24mM)}\left| N\right.$,
  where
  \begin{align*}
    \alpha(t) = -M\sum\limits_{\delta|M}\delta r_{\delta} -
  12M\sum\limits_{{\delta|M \atop 0 < g < \delta}}\delta P_2\left(\frac{g}{\delta}\right)r_{\delta,g}-24Mt.
  \end{align*}

  \item[{9}\label{con-2_N_9}] Let $\prod_{\delta|M}\delta^{|r_\delta|} = 2^zj$,
   where $z\in \mathbb{N}$ and $j$ is odd.
   If $2| m$, then $\kappa N\equiv 0 \pmod 4$ and $Nz\equiv 0 \pmod 8$,
    or $z\equiv 0 \pmod 2$ and $N(j-1)\equiv 0 \pmod 8$.

  \item[{10}\label{con-2_N_10}]
  Let $\mathbb{S}_n = \{j^2 \pmod n \colon j\in\mathbb{Z}_n,\  \gcd(j, n)=1,\  j\equiv 1 \pmod{N}\}$.
  For any $s\in \mathbb{S}_{24mM}$,
 \[\frac{s-1}{24}\sum\limits_{\delta|M}\delta r_\delta + (s-1)\sum\limits_{{\delta|M \atop 0<g<\delta}}\frac{\delta}{2}P_2\left(\frac{g}{\delta}\right)r_{\delta, g}+ts \equiv t \pmod m.\]
\end{description}
For a given partition function $b(n)$, and given
integers $m$ and $t$,  such a positive integer $N$ always exists, because $N=24mM$ satisfies the conditions \nameref{con-2_N_1}--\nameref{con-2_N_10}. For example,
for Andrews' (3,1)-singular overpartition function $\overline{Q}_{3, 1}(n)$,
and for  $m=9$ and $t=3$
we have $N=6$.
Compared with the conditions in Sect. \ref{construction},
the  conditions
\nameref{con-2_N_3}--\nameref{con-2_N_5}
are required to deal with the generalized eta-quotients.

\begin{thm}\label{con_F_modular_function-2}
For a given partition function $b(n)$ as defined by \eqref{def-1},
and for given integers $m$ and $t$,
suppose that $N$ is a positive integer satisfying the conditions \nameref{con-2_N_1}--\nameref{con-2_N_10}.
Let
\begin{align*}
  F(\tau)=\phi(\tau)\, g_{m,t}(\tau),
\end{align*}
where
\[\phi(\tau)=\prod_{\delta | N}\eta^{a_{\delta}}(\delta \tau)\,\prod_{{\delta|N\atop 0<g\leq \left\lfloor\delta/2\right\rfloor}}\eta_{\delta,g}^{a_{\delta,g}}(\tau),\]
and $a_{\delta}$ and $a_{\delta,g}$ are integers.
Then $F(\tau)$ is a modular function with respect to $\Gamma_1(N)$ if and only if $a_{\delta}$ and $a_{\delta,g}$ satisfy the following conditions:

\begin{enumerate}
\setlength{\parskip}{2ex}
  \item[{\rm(1)}\label{F_con_1}] $\sum\limits_{\delta|N}a_\delta+\sum\limits_{\delta|M}r_\delta=0$,

  \item[{\rm(2)}\label{F_con_2}] $N \sum\limits_{\delta|N}\frac{a_\delta}{\delta}
                  +2N\sum\limits_{{\delta|N\atop 0<g\leq \left\lfloor{\delta}/{2}\right\rfloor}}\frac{a_{\delta,g}}{\delta}
                  +Nm\sum\limits_{\delta|M}\frac{r_\delta}{\delta}
                  +2Nm \sum\limits_{{\delta|M\atop 0<g<{\delta}}}\frac{r_{\delta,g}}{\delta}\equiv0\pmod{24}$,

  \item[{\rm(3)}\label{F_con_3}]
  $\sum\limits_{\delta|N}\delta a_\delta
  +12 \sum\limits_{{\delta|N\atop 0<g\leq \left\lfloor{\delta}/{2}\right\rfloor}}\delta P_2\left(\frac{g}{\delta}\right){a_{\delta,g}}
  +m\sum\limits_{\delta|M}\delta r_\delta$\\[6pt]
  \rule{33pt}{0pt}$+ 12m \sum\limits_{{\delta|M\atop 0<g< {\delta}}}\delta P_2\left(\frac{g}{\delta}\right){r_{\delta,g}}
  +\frac{(m^2-1)\alpha(t)}{m M}
  \equiv0\pmod{24},$\\[6pt]
  where
  $$\alpha(t)=-M\sum\limits_{\delta|M}\delta r_{\delta} -
  12M\sum\limits_{{\delta|M \atop 0 < g < \delta}}\delta P_2\left(\frac{g}{\delta}\right)r_{\delta,g}-24Mt,$$

  \item[{\rm(4)}\label{F_con_4}] For any integer $0<a<12N$ with $\gcd{(a,6)}=1$ and $a\equiv 1\pmod N$,
  $$\prod\limits_{\delta|N}\left(\frac{\delta}{a}\right)^{|a_\delta|}
  \prod\limits_{\delta|M}
  \left(\frac{m\delta}{a}\right)^{|r_\delta|} e^{\sum\limits_{\delta|N}\sum\limits_{g=1}^{{\tiny \left\lfloor\delta/2\right\rfloor}}\pi i\big(\frac{g}{\delta}-\frac{1}{2}\big)(a-1)a_{\delta,g}+\sum\limits_{\delta|M}\sum\limits_{g=1}^{\delta-1}\pi i\big(\frac{g}{\delta}-\frac{1}{2}\big)(a-1)r_{\delta,g}}=1.$$
\end{enumerate}
\end{thm}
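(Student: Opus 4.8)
The plan is to mirror exactly the proof of Theorem~\ref{con_F_modular_function}, tracking the extra generalized-eta factors $\prod_{\delta|M,\,0<g<\delta}(q^g,q^{\delta-g};q^\delta)^{r_{\delta,g}}_\infty$ that now appear in the generating function of $b(n)$. First I would establish the analogues of the auxiliary lemmas of Section~\ref{construction}. Lemma~\ref{gamma1-transformation} carries over verbatim since it involves only the group $\Gamma_1(N)$. For the transformation law of $g_{m,t}(\tau)$ under $\Gamma_1(N)^*$ (the analogue of Lemma~\ref{tran_g_gamma-1}), I would factor $g_{m,t}(\tau)$, up to a power of $q$, as a product of ordinary eta-functions $\eta(\delta m\tau)$ (contributing the $r_\delta$ terms) and generalized eta-functions $\eta_{\delta m,gm}$-type factors (contributing the $r_{\delta,g}$ terms), exactly as in Radu's Lemma~2.14 but now also invoking Robins' transformation formula \cite[Theorem~2]{Robins-1994} for the generalized pieces. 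The new conditions \nameref{con-2_N_3}--\nameref{con-2_N_5} are precisely what is needed to guarantee that the multiplier system arising from the generalized-eta pieces is well-defined and trivial on $\Gamma_1(N)^*$, and the modified $\alpha(t)$ and $\ell$ absorb the $P_2(g/\delta)$ contributions; this bookkeeping is where I expect the main obstacle to lie, since one must verify that each congruence condition kills exactly the corresponding obstruction modulo $2$, $4$, $12$, $24$.

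Next I would prove the analogue of Lemma~\ref{trans-F-lem}: writing $F(\tau)=\phi(\tau)g_{m,t}(\tau)$ with $\phi$ as in \eqref{phi-form}, and applying the transformation formulas of Newman \cite[Lemma~2]{Newman-1959} and Robins \cite[Theorem~2]{Robins-1994} to the pieces of $\phi$, together with the new transformation law of $g_{m,t}$, one gets a formula of the shape \eqref{F-trans-law-1} but with $\nu(\gamma)$ enlarged by the term $\sum_{\delta|M,\,0<g<\delta}(\frac{g}{\delta}-\frac12)(a-1)r_{\delta,g}$ and $\xi(\gamma)$ enlarged by the corresponding $P_2$ and $\frac{r_{\delta,g}}{\delta}$ sums coming from $b(n)$. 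This is a routine but careful combination of the three transformation formulas, and the reason the paper says the proof is omitted is that it is line-for-line parallel to Lemma~\ref{trans-F-lem}.

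With the transformation formula in hand, the proof of the theorem itself is then identical in structure to the proof of Theorem~\ref{con_F_modular_function}. For the forward direction, I would specialize $\gamma$ to $\left(\begin{smallmatrix}1&0\\N&1\end{smallmatrix}\right)$, $\left(\begin{smallmatrix}1&1\\N&N+1\end{smallmatrix}\right)$, and then to $\gamma\in\Gamma_1(N)^*$ with $\gcd(a,6)=1$, $a\equiv1\pmod N$: matching the weight-$\tfrac12(\sum a_\delta+\sum r_\delta)$ factor with $1$ gives condition~(1); comparing the phase $e^{-\pi iN(\cdots)}$ gives condition~(2); comparing the phase at $\left(\begin{smallmatrix}1&1\\N&N+1\end{smallmatrix}\right)$ gives condition~(3); and once (1)--(3) are used to force $e^{\pi i\xi(\gamma)}=1$, the residual Jacobi-symbol-times-$e^{\pi i\nu(\gamma)}$ identity gives condition~(4), the extra factor $e^{\sum_{\delta|M}\sum_{g=1}^{\delta-1}\pi i(\frac{g}{\delta}-\frac12)(a-1)r_{\delta,g}}$ being exactly the enlargement of $\nu(\gamma)$ noted above. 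For the converse, assuming (1)--(4), Lemma~\ref{trans-F-lem}'s analogue together with Lemma~\ref{gamma1-transformation} gives $\Gamma_1(N)$-invariance, and the analogues of Lemma~\ref{thm-definition-pgamma-1} and Lemma~\ref{phi_trans} (the latter needs no change at all) show that $F(\gamma\tau)$ has a finite-principal-part Laurent expansion in $q^{1/N}$ at every cusp, whence $F$ is a modular function for $\Gamma_1(N)$. The only genuinely new verification throughout is that conditions \nameref{con-2_N_3}--\nameref{con-2_N_5}, \nameref{con-2_N_8}, \nameref{con-2_N_10} with the modified $\alpha(t)$ do the job that \nameref{con_N_6}--\nameref{con_N_8}, \nameref{con_N_10} did before; this is the step I would spend the most care on, and it is the reason the modified condition list is stated so explicitly.
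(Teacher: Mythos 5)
Your proposal is correct and follows exactly the route the paper intends: the paper omits the proof of this theorem, stating only that it is obtained from the proof of Theorem \ref{con_F_modular_function} by the modifications you describe, namely enlarging $\nu(\gamma)$ and $\xi(\gamma)$ by the Robins--Schoeneberg contributions of the $r_{\delta,g}$ factors, using the modified $\ell$ and $\alpha(t)$, and letting the new conditions on $N$ absorb the resulting congruence obstructions. The one loose phrase is that $g_{m,t}(\tau)$ itself is not a product of (generalized) eta-functions---only the full generating function of $b(n)$ is---but since you invoke Radu's Lemma~2.14 machinery (the averaging over $\lambda\in\mathbb{Z}_m$) for the arithmetic-progression extraction, this is a matter of wording rather than a gap.
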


In the notation $p(\gamma, \lambda)$ and $p(\gamma)$ in \eqref{def_p_gamma-l-1} and \eqref{def_p_gamma-1}, we define
the map $p\colon \Gamma \times \mathbb{Z}_m \rightarrow \mathbb{Q}$
by
\begin{align*}
p(\gamma, \lambda) & =  \frac{1}{24}\sum\limits_{\delta|M}\frac{\gcd^2(\delta(a+\kappa\lambda c), mc)}{\delta m}r_\delta\\[6pt]
 &\qquad +\frac{1}{2}\sum\limits_{{\delta|M \atop 0<g<\delta}}\frac{\gcd^2(\delta(a+\kappa\lambda c), mc)}{\delta m}P_2\left(\frac{(a+\kappa\lambda c)g}{\gcd(\delta(a+\kappa\lambda c), mc)}\right)r_{\delta, g},
\end{align*}
and define $p(\gamma)$ by
\begin{eqnarray}\label{def_p_gamma}
p(\gamma) = \min\{p(\gamma, \lambda): \lambda=0, 1, \ldots, m-1\}.
\end{eqnarray}

Parallel to Theorem \ref{order_F-1},
we obtain lower bounds of the orders of $F(\tau)$ at cusps of $\Gamma_1(N)$.

\begin{thm}\label{order_F}
For a given partition function $b(n)$ as defined by \eqref{def-1},
and for given integers $m$ and $t$,
let
\begin{align*}
  F(\tau)=\phi(\tau)\,g_{m,t}(\tau),
\end{align*}
where
\[\phi(\tau)=\prod_{\delta | N}\eta^{a_{\delta}}(\delta \tau)\prod_{{\delta|N\atop 0<g\leq \left\lfloor\delta/2\right\rfloor}}\eta_{\delta,g}^{a_{\delta,g}}(\tau),\] $a_{\delta}$ and $a_{\delta,g}$ are integers.
Assume that $F(\tau)$ is a modular function for $\Gamma_1(N)$.
Let $\{s_1, s_2,\ldots,s_\epsilon\}$ be a complete set of inequivalent cusps of $\Gamma_1(N)$,
and for each $1\leq i \leq \epsilon$,
let $\alpha_i\in\Gamma$ be such that $\alpha_i\infty = s_i$.
Then
\begin{align}\label{ord_cusp}
\mathrm{ord}_{s_i}(F(\tau))\ge w_{\alpha_i}\,(p(\alpha_i)+p^*(\alpha_i)),
\end{align}
where $p(\gamma)$ is given by \eqref{def_p_gamma} and $p^*(\gamma)$ is defined in Lemma \ref{phi_trans}.
\end{thm}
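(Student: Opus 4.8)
The plan is to prove Theorem~\ref{order_F} by transcribing the proof of Theorem~\ref{order_F-1}, once the two expansion results on which that proof rests have been re-established in the present, more general setting. The factor $\phi(\tau)$ causes no difficulty: Lemma~\ref{phi_trans} is already stated for an arbitrary generalized eta-quotient $\phi$ of the required shape, so we keep the Laurent expansion $\phi(\gamma\tau)=(c\tau+d)^{\frac{1}{2}\sum_{\delta|N}a_\delta}q^{p^*(\gamma)}h^*(q)$ with $p^*$ constant on each double coset $\Gamma_1(N)\gamma\Gamma_\infty$. What must be redone is the analog of Lemma~\ref{thm-definition-pgamma-1} for the generating function $g_{m,t}(\tau)$ of $b(n)$ in \eqref{gmt-2}: for $N$ satisfying \nameref{con-2_N_1}--\nameref{con-2_N_10} and any $\gamma\in\Gamma$ with $\gamma\in\Gamma_1(N)\gamma_i\Gamma_\infty$, there should be an integer $w$ and a Taylor series $h(q)$ in powers of $q^{1/w}$ such that
\[
g_{m,t}(\gamma\tau)=(c\tau+d)^{\frac{1}{2}\sum_{\delta|M}r_\delta}\,q^{p(\gamma_i)}h(q),
\]
where $p(\gamma)$ is the map in \eqref{def_p_gamma}. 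This in turn rests on the $\Gamma_1(N)^*$-transformation law for $g_{m,t}(\tau)$, the analog of Lemma~\ref{tran_g_gamma-1}, which now also carries the Robins factors coming from the exponents $r_{\delta,g}$.

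To obtain the generalized Laurent expansion I would follow the proof of Lemma~3.4 of Radu~\cite{Radu-2009}. Writing $\sum_{n\ge 0}b(n)q^n$ as a generalized eta-quotient up to a power of $q$ (as noted after \eqref{def}) and unwinding $g_{m,t}(\gamma\tau)$ through the standard $U_m$-type sum over $\lambda\in\mathbb{Z}_m$, one decomposes each relevant matrix in the form $\gamma_\delta T_\delta$ exactly as in \eqref{mat_dec}, and then applies the transformation law for $\eta$ under $\Gamma$ together with the Schoeneberg transformation for $\eta^{(s)}_{g,h}$ recalled in the proof of Lemma~\ref{phi_trans}. The $\eta$-factors contribute $\frac{1}{24}\sum_{\delta|M}\frac{\gcd^{2}(\delta(a+\kappa\lambda c),mc)}{\delta m}r_\delta$ to the $q$-order, exactly as in Radu's lemma, while the new generalized-eta factors contribute $\frac{1}{2}\sum_{\delta|M,\,0<g<\delta}\frac{\gcd^{2}(\delta(a+\kappa\lambda c),mc)}{\delta m}P_2\big(\frac{(a+\kappa\lambda c)g}{\gcd(\delta(a+\kappa\lambda c),mc)}\big)r_{\delta,g}$, by the very $P_2$-order computation that produces $p^*(\gamma)$ in Lemma~\ref{phi_trans}; minimizing the sum of these two over $\lambda=0,1,\dots,m-1$ reproduces $p(\gamma)$ as in \eqref{def_p_gamma}. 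Conditions \nameref{con-2_N_3}--\nameref{con-2_N_5} and \nameref{con-2_N_10} are precisely what is needed for the exponents and roots of unity that appear to be consistent with $\Gamma_1(N)$-invariance and for $p$ to depend only on the double coset; the well-definedness on double cosets is checked as in the proof of Lemma~\ref{phi_trans}, using $a_2g\equiv\pm a_1g\pmod{\gcd(\delta,c_1)}$, $\gcd(\delta,c_2)=\gcd(\delta,c_1)$ and $P_2(-\alpha)=P_2(\alpha)$.

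Granting the two expansions, the remaining argument is a copy of the proof of Theorem~\ref{order_F-1}. By Proposition~3.8.5 of Diamond and Shurman~\cite{Diamond-Shurman-2005}, $\{\alpha_1,\dots,\alpha_\epsilon\}$ is a complete set of representatives of $\Gamma_1(N)\backslash\Gamma/\Gamma_\infty$. Taking $\gamma_i=\alpha_i$ in the generalized Laurent expansion of $g_{m,t}$ and multiplying by the expansion of $\phi$ from Lemma~\ref{phi_trans} gives
\[
F(\alpha_i\tau)=(c\tau+d)^{\frac{1}{2}\big(\sum_{\delta|N}a_\delta+\sum_{\delta|M}r_\delta\big)}\,q^{p(\alpha_i)+p^*(\alpha_i)}\,h(q)\,h^*(q),
\]
and since $F$ is a modular function for $\Gamma_1(N)$, condition~(1) of Theorem~\ref{con_F_modular_function-2} forces $\sum_{\delta|N}a_\delta+\sum_{\delta|M}r_\delta=0$, so the automorphy factor is $1$ and $F(\alpha_i\tau)=q^{p(\alpha_i)+p^*(\alpha_i)}h(q)h^*(q)$. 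Letting $v_{\alpha_i}$ be the smallest exponent of $q$ on the right-hand side, the fact that $h$ and $h^*$ are Taylor series yields $v_{\alpha_i}\ge p(\alpha_i)+p^*(\alpha_i)$; combining this with $\mathrm{ord}_{\alpha_i}(F)=v_{\alpha_i}w_{\alpha_i}$ from \eqref{v-eq} and $\mathrm{ord}_{s_i}(F)=\mathrm{ord}_{\alpha_i}(F)$ from \eqref{ord_cusp_def} gives $\mathrm{ord}_{s_i}(F(\tau))\ge w_{\alpha_i}(p(\alpha_i)+p^*(\alpha_i))$, which is \eqref{ord_cusp}. The main obstacle is the bookkeeping in the middle paragraph—carrying the extra generalized-eta factors through Radu's Lemma~3.4 argument and verifying that \nameref{con-2_N_3}--\nameref{con-2_N_5} and \nameref{con-2_N_10} make $p(\gamma)$ both compatible with the $\Gamma_1(N)$-action and independent of the choice of double-coset representative; once that is in place, the passage to cusp orders is mechanical.
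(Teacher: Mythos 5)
Your proposal is correct and follows exactly the route the paper intends: the paper omits the proof of Theorem~\ref{order_F}, stating that it is obtained from the proof of Theorem~\ref{order_F-1} with the modifications you describe, namely reusing Lemma~\ref{phi_trans} verbatim for $\phi$ and extending the Laurent expansion of $g_{m,t}(\gamma\tau)$ so that the leading order is the generalized $p(\gamma_i)$ of \eqref{def_p_gamma}, with the extra $P_2$-terms contributed by the factors $r_{\delta,g}$. The concluding passage from the expansion $F(\alpha_i\tau)=q^{p(\alpha_i)+p^*(\alpha_i)}h(q)h^*(q)$ to the cusp-order bound via $v_{\alpha_i}w_{\alpha_i}$ is identical to the paper's argument for Theorem~\ref{order_F-1}.
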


For a given partition function $b(n)$, and given integers $m$ and $t$,
assume that we have found a generalized eta-quotient $\phi(\tau)$
such that
\begin{align}\label{F_expression-2}
F(\tau) = \phi(\tau)\,g_{m,t}(\tau)
\end{align}
is a modular function for $\Gamma_1(N)$.
Utilizing the algorithm  in Sect. \ref{Sketch},
we try to express $F(\tau)$ as a linear
combination of generalized eta-quotients with  level $N$.
If we succeed, then we obtain a Ramanujan-type identity for $b(mn+t)$.
Note that Theorem \ref{order_F} is needed to find
a generalized eta-quotient $h(\tau)$ such that $hF$ has a pole
only at infinity.

For example, we can derive   Ramanujan-type identities
 on the singular overpartition function introduced by Andrews \cite{Andrews-2015}.
The number of $(k,i)$-singular overpartitions of $n$ is denoted by $\overline{Q}_{k,i}(n)$ $(1\leq i< \frac{k}{2})$.
For $k=3$ and $i=1$,
 \eqref{asop-1} specializes to
\begin{align*}
  \sum_{n=0}^\infty \overline{Q}_{3,1}(n)q^n = \frac{(q^3;q^3)_\infty(q^2,q^4;q^6)_\infty}{(q;q)_\infty(q,q^2;q^3)_\infty}.
\end{align*}
When applied to the above generating function, our algorithm
 produces the Ramanujan-type identities on $\overline{Q}_{3,1}(9n+3)$ and $\overline{Q}_{3,1}(9n+6)$ due to Shen \cite{Shen-2016}.

\begin{thm}\label{THM-An-RTI}
We have
\begin{align*}
 \frac{(q;q)_{\infty}^{14}}
       {q(q^2;q^2)_{\infty}^5(q^3;q^3)_{\infty}^6(q^6;q^6)_{\infty}^{3}} \sum_{n=0}^\infty \overline{Q}_{3,1}(9n+3)q^n
=6z+96,
\end{align*}
and
\begin{align*}
\frac{(q;q)_{\infty}^{13}}
       {q(q^2;q^2)^4_{\infty}(q^3;q^3)^3_{\infty}(q^6;q^6)_{\infty}^{6}}
\sum_{n=0}^\infty \overline{Q}_{3,1}(9n+6)q^n=24 z+96,
\end{align*}
where
\begin{align*}
z = \frac{(q^2;q^2)_{\infty}^3(q^3;q^3)_{\infty}^9}
{q(q;q)_{\infty}^3(q^6;q^6)_{\infty}^9}.
\end{align*}
\end{thm}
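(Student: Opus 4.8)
The plan is to run the extended algorithm of Section~\ref{tran-law} at level~$6$. First I would record that, by \eqref{asop-1} with $k=3$ and $i=1$,
\begin{align*}
\sum_{n\ge 0}\overline{Q}_{3,1}(n)q^n=\frac{(q^3;q^3)_\infty(q^2,q^4;q^6)_\infty}{(q;q)_\infty(q,q^2;q^3)_\infty}
\end{align*}
is of the form \eqref{def-1} with $M=6$ and nonzero exponents $r_1=-1$, $r_3=1$, $r_{3,1}=-1$, $r_{6,2}=1$; a short computation gives $\ell=0$, so $g_{9,t}(\tau)=q^{t/9}\sum_{n\ge0}\overline{Q}_{3,1}(9n+t)q^n$. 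For $m=9$ and $t\in\{3,6\}$ one checks directly that $N=6$ satisfies the conditions \nameref{con-2_N_1}--\nameref{con-2_N_10}, so the whole apparatus is available with level~$6$.

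The five steps of Section~\ref{Sketch}, in their $b(n)$-version, are then carried out as follows.
\begin{description}
\item[Step 1] Solve the congruences (1)--(3) and the sign condition (4) of Theorem \ref{con_F_modular_function-2} for $N=6$ to obtain a generalized eta-quotient $\phi(\tau)$ of level $6$ for which $F(\tau)=\phi(\tau)\,g_{9,t}(\tau)$ is a modular function for $\Gamma_1(6)$.
\item[Step 2] Take a complete set of inequivalent cusps of $\Gamma_1(6)$ from Cho--Koo--Park \cite{Cho-Koo-Park-2009}, write each in the shape $\lambda/(\mu\varepsilon)$ of Theorem \ref{determine-GE}, and solve the resulting system of Diophantine (in)equalities \eqref{linear_inequalities}; by \cite[p.~234]{Schrijver-1986} its solution set is finitely generated, yielding a finite set of generators $\{z_1,\ldots,z_k\}$ of $GE^\infty(6)$.
\item[Step 3] Apply the $\Gamma_1$-version of Radu's Algorithm AB to $\{z_1,\ldots,z_k\}$; we expect $\langle GE^\infty(6)\rangle_{\mathbb{Q}}=\mathbb{Q}[z]$ with $z=\dfrac{(q^2;q^2)_\infty^3(q^3;q^3)_\infty^9}{q(q;q)_\infty^3(q^6;q^6)_\infty^9}$, so that $1$ alone is a $\mathbb{Q}[z]$-module basis.
\item[Step 4] Bound $\mathrm{ord}_s(F)$ from below at every cusp $s\neq\infty$ by Theorem \ref{order_F} (computing $p(\alpha_i)$ from \eqref{def_p_gamma}, $p^*(\alpha_i)$ from Lemma \ref{phi_trans}, and widths from \eqref{w-calcu}), solve the linear inequalities \eqref{h-exist-1}, and invoke Theorem \ref{sol-best-h} to select the generalized eta-quotient $h=\prod_j z_j^{t_j}$ maximizing $\mathrm{ord}_\infty(hF)$; then $hF$ has a pole only at $\infty$.
\item[Step 5] Run Algorithm MC on $hF$ and $z$, using the nonpositive part of the $q$-expansion of $hF$ obtained from \eqref{def-1} and the expansions of $h$ and $\phi$, to confirm $hF\in\langle GE^\infty(6)\rangle_{\mathbb{Q}}$, and then apply Algorithm MW to express $hF$ as a polynomial in $z$.
\end{description}
Because $GE^\infty(6)$ is $\mathbb{Q}[z]$-spanned by $1$, the output of Step~5 is a one-variable polynomial whose coefficients, once $hF$ is known to lie in the module, are forced by matching finitely many Fourier coefficients; this gives $hF=6z+96$ when $t=3$ and $hF=24z+96$ when $t=6$. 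Substituting the explicit eta-products for $h$, $\phi$ and $z$, cancelling the common power of $q$, and using $\eta_{\delta,0}(\tau)=\eta^2(\delta\tau)$ and $\eta_{\delta,\delta/2}(\tau)=\eta^2(\tfrac{\delta}{2}\tau)/\eta^2(\delta\tau)$ to rewrite the generalized eta-factors as ordinary $q$-products, recovers the two stated identities.

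The hard part will be Step~5: as stressed in the introduction the algorithm is not guaranteed to succeed, and the crux is whether $hF$ actually belongs to $\langle GE^\infty(6)\rangle_{\mathbb{Q}}$; were it not, one would have to enlarge $N$. The bulk of the genuine computation, though, is Step~4 — enumerating the cusps of $\Gamma_1(6)$ with their widths and the two $p$- and $p^*$-contributions — since an error in any of those lower bounds could derail the search for a valid $h$. Steps 1--3 are routine for $N=6$: the congruence-plus-sign system of Theorem \ref{con_F_modular_function-2} is small, and the generator and module-basis computations amount to finite linear algebra over $\mathbb{Q}$.
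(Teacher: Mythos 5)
Your proposal is correct and follows exactly the route the paper intends: the paper itself presents Theorem \ref{THM-An-RTI} as a direct output of the extended algorithm of Section \ref{tran-law} with $M=6$, $m=9$, $N=6$ and offers no further written proof, and your identification of the parameters ($r_1=-1$, $r_3=1$, $r_{3,1}=-1$, $r_{6,2}=1$, $\ell=0$) and of the five algorithmic steps matches the paper's setup. Like the paper, you leave the explicit $\phi$, $h$ and the final coefficient match to machine computation, so there is nothing to add beyond noting that the argument is the same.
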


Our extended algorithm can also be used to derive dissection formulas on the quotients in the form of \eqref{def-1}, that is,
\begin{align}\label{quo}
  \prod_{\delta | M}
(q^\delta;q^\delta)^{r_\delta}_\infty
\prod_{\delta|M\atop 0<g<\delta}
(q^g,q^{\delta-g};q^\delta)^{r_{\delta,g}}_\infty,
\end{align}
where $M$ is a positive integer and $r_\delta$, $r_{\delta, g}$ are integers.
Let $b(n)$ be the partition function defined by \eqref{def-1},
and let $m$ be a positive integer. If our algorithm can be utilized to
find a formula for the generating function of $b(mn+t)$ for each $0\leq t \leq m-1$,
then we are led to an $m$-dissection formula on the quotient in \eqref{quo}.
For example, we get the $2$-, $4$-dissections of the Rogers--Ramanujan continued fraction \cite{Ramanujan-1988, Andrew-1981,Hirschhorn-1998, Lewis-Liu-2000},
the 8-dissections of the Gordon's continued fraction \cite{Hirschhorn-2001, Xia-Yao-2011} and the $2$-, $3$-, $4$-, $6$-dissections of Ramanujan's cubic continued fraction \cite{Hirschhorn-Roselin-2010, Srivastava-2007}.

We now demonstrate how to deduce the $2$-dissection
formula for the Rogers--Ramanujan continued fraction:
$$R(q) = \frac{1}{1}\+\frac{q}{1}\+\frac{q^2}{1}\+\frac{q^3}{1}\+\dos.$$
Rogers  \cite[p. 329]{Rogers-1894} showed that
\begin{align}\label{R-R-c}
  R(q) = \frac{(q^2,q^3;q^5)_\infty}{(q,q^4;q^5)_\infty}.
\end{align}
The following $2$-dissection formulas of Ramanujan \cite[p. 50]{Ramanujan-1988}
were first proved by Andrews \cite{Andrew-1981}.
With respect to the quotient in \eqref{R-R-c}, we have to count on the extended algorithm
because \eqref{R-R-c} cannot be expressed in the form of \eqref{radu-gf}.

\begin{thm}
We have
  \begin{align}\label{R-R-1}
    R(q) = \frac{(q^8,q^{12};q^{20})_\infty^2}
    {(q^6,q^{14};q^{20})_\infty
     (q^{10},q^{10};q^{20})_\infty}
    +
    q\frac{(q^2,q^{18};q^{20})_\infty
    (q^8,q^{12};q^{20})_\infty}
    {(q^4,q^{16};q^{20})_\infty
     (q^{10},q^{10};q^{20})_\infty}
  \end{align}
  and
    \begin{align}\label{R-R-2}
    R(q)^{-1} = \frac{(q^4,q^{16};q^{20})_\infty^2}
    {(q^2,q^{18};q^{20})_\infty
     (q^{10},q^{10};q^{20})_\infty}
    -
    q\frac{(q^4,q^{16};q^{20})_\infty
    (q^6,q^{14};q^{20})_\infty}
    {(q^8,q^{12};q^{20})_\infty
     (q^{10},q^{10};q^{20})_\infty}.
  \end{align}
\end{thm}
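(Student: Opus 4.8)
The plan is to run the extended algorithm of Section~\ref{tran-law} on the partition function $b(n)$ with $\sum_{n\ge0}b(n)q^n=R(q)=(q^2,q^3;q^5)_\infty/(q,q^4;q^5)_\infty$, which is of the form \eqref{def-1} with $M=5$, $r_{5,2}=1$, $r_{5,1}=-1$ and all remaining $r_\delta,r_{\delta,g}$ equal to $0$, together with $m=2$ and $t\in\{0,1\}$. Since
\begin{align*}
R(q)=\sum_{n\ge0}b(2n)q^{2n}+q\sum_{n\ge0}b(2n+1)q^{2n},
\end{align*}
it suffices to produce closed generalized-eta-quotient expressions for $\sum_{n\ge0}b(2n)q^{n}$ and $\sum_{n\ge0}b(2n+1)q^{n}$ and then substitute $q\mapsto q^{2}$ in both. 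The identity \eqref{R-R-2} for $R(q)^{-1}=(q,q^4;q^5)_\infty/(q^2,q^3;q^5)_\infty$ is obtained in exactly the same fashion, now with $r_{5,1}=1$, $r_{5,2}=-1$.

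First I would fix the level. The conditions \nameref{con-2_N_1} and \nameref{con-2_N_2} force $10\mid N$, and one checks (with $\kappa=\gcd(m^2-1,24)=3$) that the remaining conditions \nameref{con-2_N_3}--\nameref{con-2_N_10} already hold for $N=10$, for both $t=0$ and $t=1$. For each $t$, Step~1 is to apply Theorem~\ref{con_F_modular_function-2} to obtain a generalized eta-quotient $\phi_t(\tau)$ of the shape \eqref{phi-form} such that $F_t(\tau)=\phi_t(\tau)\,g_{2,t}(\tau)$ is a modular function for $\Gamma_1(10)$, where $g_{2,t}(\tau)$ is the normalisation of $\sum_{n\ge0}b(2n+t)q^{n}$ from \eqref{gmt-2}. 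Steps~2 and~3 then proceed exactly as in the $\overline{p}(5n+2)$ example: one solves the Diophantine system \eqref{linear_inequalities} for $N=10$ to get a finite generating set of $GE^\infty(10)$, and runs Radu's Algorithm~AB, adapted to $\Gamma_1(N)$, to recover $\langle GE^\infty(10)\rangle_{\mathbb{Q}}=\mathbb{Q}[z]$ with $z$ as in \eqref{generator_10}.

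In Step~4 I would use Theorem~\ref{order_F} together with Theorem~\ref{sol-best-h} to choose a generalized eta-quotient $h_t$, a monomial in the generators of $GE^\infty(10)$, for which $h_tF_t$ has a pole only at infinity with $\mathrm{ord}_\infty(h_tF_t)$ as large as possible, and in Step~5 apply Radu's Algorithms MC and MW to express $h_tF_t$ inside $\mathbb{Q}[z]$. The crucial point, which is not guaranteed in advance, is that for $m=2$ the cusp-order bounds of Theorem~\ref{order_F} at this optimal $h_t$ also give $\mathrm{ord}_\infty(h_tF_t)\ge 0$, so that $h_tF_t$ is a \emph{constant} $c_t$; equivalently $F_t=c_t/h_t$, whence $\sum_{n\ge0}b(2n+t)q^{n}$ equals a single generalized eta-quotient up to a power of $q$. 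Converting the generalized eta-functions back to infinite products by $\eta_{\delta,g}(\tau)=q^{\frac{\delta}{2}P_2(g/\delta)}(q^g,q^{\delta-g};q^\delta)_\infty$ together with \eqref{ge-e}, clearing the power of $q$, and finally replacing $q$ by $q^{2}$ reproduces the two summands on the right-hand side of \eqref{R-R-1}; the same computation run with the reciprocal exponents yields \eqref{R-R-2}.

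The main obstacle will be Step~5: since the algorithm is not guaranteed to terminate successfully, one must verify that $h_tF_t$ genuinely lies in $\langle GE^\infty(10)\rangle_{\mathbb{Q}}$ and that the Algorithm~MW output is a constant rather than a higher-degree polynomial in $z$, that is, that the order estimates from Theorem~\ref{order_F} leave no room for a pole at infinity. Concretely this amounts to a finite comparison of the non-positive part of the $q$-expansion of $h_tF_t$ with the $q$-expansion of $z$; once this is done, what remains is only the elementary bookkeeping of rewriting the generalized eta-quotients as $(q^g,q^{\delta-g};q^\delta)_\infty$-products and carrying out the substitution $q\mapsto q^{2}$.
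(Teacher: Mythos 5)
Your proposal is correct and follows essentially the same route as the paper: both treat $R(q)^{\pm1}$ as a partition function of the form \eqref{def-1} with $M=5$, $r_{5,1}=-1$, $r_{5,2}=1$ (resp.\ the reciprocal exponents), take $N=10$, and run the extended algorithm of Section~\ref{tran-law} with $m=2$, $t\in\{0,1\}$ to express each dissection component as a single generalized eta-quotient built from the generators \eqref{generator_10}, finishing with the substitution $q\mapsto q^2$. The paper's proof merely records the two resulting monomials $\frac{z_1z_3}{z_2z^2}\cdot\frac{\eta^2_{10,5}(\tau)}{\eta^2_{10,4}(\tau)}$ and $\frac{z_2^3z^4}{z_1^2z_3^3}\cdot\frac{\eta^8_{10,4}(\tau)}{\eta^8_{10,5}(\tau)}$, which is exactly the outcome you anticipate when $h_tF_t$ reduces to a constant.
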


\begin{proof}
 As far as  \eqref{R-R-c} is concerned, we have $M=5$,
  $r_{5,1}=-1$ and $r_{5,2}=1$.
  We find that $N=10$ satisfies the conditions \nameref{con-2_N_1}--\nameref{con-2_N_10}.
  Let $r(n)$ be defined by
  $$R(q) = \sum_{n=0}^\infty r(n) q^n. $$
  Employing our algorithm,
  we obtain that
  \begin{align*}
    \sum_{n=0}^\infty r(2n) q^n
    = \frac{z_1z_3}{z_2z^2}
    \cdot\frac{\eta^2_{10,5}(\tau)}{\eta^2_{10,4}(\tau)}
  \end{align*}
  and
  \begin{align*}
    \sum_{n=0}^\infty r(2n+1) q^n
    =
    \frac{z_2^3 z^4}{z_1^2 z_3^3}
    \cdot\frac{\eta^8_{10,4}(\tau)}{\eta^8_{10,5}(\tau)},
  \end{align*}
  where $z$, $z_1$, $z_2$ and $z_3$
  are given in \eqref{generator_10}.
A direct computation yields \eqref{R-R-1}.
Similarly, we get  \eqref{R-R-2}. This completes the proof.
\end{proof}

Gordon \cite{Gordon-1965} showed that
\begin{align}\label{Gordon-def}
  1+q+\frac{q^{2}}{1+q^{3}}\+ \frac{q^{4}}{1+q^{5}}\+ \frac{q^{6}}{1+q^{7}}\+\dos
  =\frac{(q^3,q^5;q^8)_\infty}{(q,q^7;q^8)_\infty}.
\end{align}

Using our algorithm, we deduce the
following $8$-dissection formulas of Hirschhorn for \eqref{Gordon-def} and its reciprocal, see \cite[pp. 373--374]{Hirschhorn-2001}.

\begin{thm}[Hirschhorn \cite{Hirschhorn-2001}]
We have
\begin{align}
\frac{\left(q^{3}, q^{5} ; q^{8}\right)_{\infty}}{\left(q, q^{7} ; q^{8}\right)_{\infty}}\notag
&=
\frac{\left(-q^{24},-q^{32},-q^{32},-q^{40}, q^{64}, q^{64} ; q^{64}\right)_{\infty}}{\left(q^{8}, q^{16}, q^{16}, q^{24}, q^{32}, q^{32} ; q^{32}\right)_{\infty}} \notag\\[6pt]
&\qquad+q \frac{\left(-q^{16},-q^{24},-q^{40},-q^{48}, q^{64}, q^{64} ; q^{64}\right)_{\infty}}{\left(q^{16}, q^{8}, q^{24}, q^{24}, q^{48}, q^{64}, q^{64} ; q^{64}\right)_{\infty}}\notag\\[6pt]
&\qquad +q^{2} \frac{\left(-q^{16},-q^{24},-q^{40},-q^{48}, q^{64}, q^{64} ; q^{64}\right)_{\infty}}{\left(q^{8}, q^{16}, q^{16}, q^{24}, q^{32}, q^{64} ; q^{64} ; q^{64}\right)_\infty}\notag\\[6pt]
&\qquad -2 q^{12} \frac{\left(-q^{8},-q^{16},-q^{64},-q^{64}, q^{64}, q^{64} ; q^{64}\right)_\infty}{\left(q^{8}, q^{16}, q^{16}, q^{24}, q^{32}, q^{32} ; q^{32}\right)_\infty}\notag\\[6pt]
&\qquad-q^5\frac{\left(-q^{8},-q^{16},-q^{48},-q^{56}, q^{64}, q^{64} ; q^{64}\right)_{\infty}}{\left(q^{8}, q^{8}, q^{24}, q^{24}, q^{32}, q^{32} ; q^{32}\right)_{\infty}} \notag\\[6pt]
&\qquad-q^{6} \frac{\left(-q^{8},-q^{16},-q^{48},-q^{56}, q^{64}, q^{64} ; q^{64}\right)_{\infty}}{\left(q^{8}, q^{16}, q^{16}, q^{24}, q^{32}, q^{32} ; q^{32}\right)_{\infty}},\notag\\[6pt]
\frac{\left(q, q^{7} ; q^{8}\right)_{\infty}}{\left(q^{3}, q^{5} ; q^{8}\right)_{\infty}}
&=\frac{\left(-q^{16},-q^{24},-q^{40},-q^{48}, q^{64}, q^{64} ; q^{64}\right)_{\infty}}{\left(q^{8}, q^{8}, q^{24}, q^{24}, q^{32}, q^{32} ; q^{32}\right)_{\infty}}\notag\\[6pt]
&\qquad -q \frac{\left(-q^{16},-q^{24},-q^{40},-q^{48}, q^{64}, q^{64} ; q^{64}\right)_{\infty}}{\left(q^{8}, q^{16}, q^{16}, q^{24}, q^{32}, q^{32} ; q^{32}\right)_{\infty}}
\notag\\[6pt]
&\qquad +q^{3} \frac{\left(-q^{8},-q^{32},-q^{32},-q^{56}, q^{64}, q^{64} ; q^{64}\right)_{\infty}}{\left(q^{8}, q^{16}, q^{16}, q^{24}, q^{32}, q^{32} ; q^{32}\right)_{\infty}}
\notag\\[6pt]
&\qquad -q^{4} \frac{\left(-q^{8},-q^{16},-q^{48},-q^{56}, q^{64}, q^{64} ; q^{64}\right)_{\infty}}{\left(q^{8}, q^{8}, q^{24}, q^{24}, q^{32}, q^{32} ; q^{32}\right)_{\infty}}\notag\\[6pt]
&\qquad + q^{5} \frac{\left(-q^{8},-q^{16},-q^{48},-q^{56}, q^{64}, q^{64} ; q^{64}\right)_{\infty}}{\left(q^{8}, q^{16}, q^{16}, q^{24}, q^{32}, q^{32} ; q^{32}\right)_{\infty}}\notag\\[6pt]
&\qquad-2 q^{7} \frac{\left(-q^{24},-q^{40},-q^{64},-q^{64}, q^{64}, q^{64} ; q^{64}\right)_{\infty}}{\left(q^{8}, q^{16}, q^{16}, q^{24}, q^{32}, q^{32} ; q^{32}\right)_{\infty}}.\notag
\end{align}
\end{thm}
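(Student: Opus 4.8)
The plan is to apply the extended algorithm of Sect.~\ref{tran-law} to the generating function \eqref{Gordon-def}, dissected with $m=8$. Writing $G(q)=(q^{3},q^{5};q^{8})_{\infty}/(q,q^{7};q^{8})_{\infty}$, this quotient is of the form \eqref{def-1} with $M=8$, $r_{8,1}=-1$, $r_{8,3}=1$, and all other exponents zero. Let $b(n)$ be defined by $\sum_{n\ge0}b(n)q^n=G(q)$, so the asserted identity is the $8$-dissection $G(q)=\sum_{t=0}^{7}q^{t}\big(\sum_{n\ge0}b(8n+t)q^{8n}\big)$, and it suffices to produce, for each residue $0\le t\le 7$, a formula for $\sum_{n\ge0}b(8n+t)q^n$ as a quotient of infinite products of the shape appearing on the right-hand side. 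The reciprocal $1/G(q)$ is treated identically, with $r_{8,1}=1$, $r_{8,3}=-1$.

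First I would fix a level $N$ satisfying the conditions \nameref{con-2_N_1}--\nameref{con-2_N_10} for $m=8$ and every $t$; since $\kappa=\gcd(63,24)=3$ and $M=8$, the choice $N=64$ is admissible, and it is also the level dictated by the modulus $q^{64}$ visible in the target formulas. For each $t$ I would then invoke Theorem~\ref{con_F_modular_function-2} to determine a generalized eta-quotient $\phi_t(\tau)$ of the form \eqref{phi-form} for which
\[
F_t(\tau)=\phi_t(\tau)\,g_{8,t}(\tau),\qquad g_{8,t}(\tau)=q^{\frac{t-\ell}{8}}\sum_{n\ge0}b(8n+t)q^{n},
\]
is a modular function for $\Gamma_1(64)$; concretely, $\phi_t$ comes from solving the linear and congruence conditions (1)--(4) of that theorem. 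Next, following Step~2 of the algorithm, I would compute a complete set $\mathcal S(64)$ of inequivalent cusps of $\Gamma_1(64)$ via \cite{Cho-Koo-Park-2009}, translate the characterization of Theorem~\ref{determine-GE} into the Diophantine system \eqref{linear_inequalities}, and solve it with \texttt{4ti2} \cite{4ti2} to obtain a finite generating set $z_1,\dots,z_k$ of $GE^{\infty}(64)$ (using \cite[p.~234]{Schrijver-1986}).

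With the generators in hand, Step~3 applies Radu's Algorithm AB to produce a distinguished modular function $z$ and a $z$-reduced sequence $e_1,\dots,e_w$ spanning $\langle GE^{\infty}(64)\rangle_{\mathbb Q}$ as a $\mathbb Q[z]$-module. In Step~4, Theorem~\ref{order_F} furnishes lower bounds for $\mathrm{ord}_{s}(F_t)$ at every cusp $s$, and Theorem~\ref{sol-best-h} selects a monomial $h_t$ in the $z_j$ for which $h_tF_t$ has a pole only at $\infty$ with $\mathrm{ord}_{\infty}(h_tF_t)$ as large as possible. Finally Step~5 runs Radu's Algorithm MC on the non-positive part of the $q$-expansion of $h_tF_t$ (computable from \eqref{def-1} together with the $q$-expansions of $h_t$ and $\phi_t$) and finite parts of the expansions of $z,e_1,\dots,e_w$, to confirm $h_tF_t\in\langle GE^{\infty}(64)\rangle_{\mathbb Q}$, then Algorithm MW to write $h_tF_t=p_0(z)+\sum_{i=1}^{w}p_i(z)e_i$. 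Dividing through by $h_t\phi_t$, multiplying by $q^{t}$, summing over $t=0,\dots,7$, and re-expressing the resulting generalized eta-quotients in the $(q^{a},\dots;q^{64})_{\infty}$ notation via \eqref{gen} and \eqref{ge-e} reproduces the first displayed formula; the same pipeline applied to $1/G(q)$ yields the second.

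The principal obstacle I anticipate is computational bulk rather than conceptual difficulty: at level $N=64$ the index of $\Gamma_1(64)$ is large, so $\mathcal S(64)$ has many cusps, the system \eqref{linear_inequalities} has many variables, and the generating set of $GE^{\infty}(64)$ can be sizable, which makes the Algorithm AB / MC / MW steps heavy. Moreover, as emphasized in the paper, the algorithm is not guaranteed to succeed — it is conceivable that for some $t$ the function $h_tF_t$ fails to lie in $\langle GE^{\infty}(64)\rangle_{\mathbb Q}$, forcing one to enlarge $N$ or choose a different $\phi_t$. That Hirschhorn's formulas do take the stated shape is the a posteriori confirmation that these potential failures do not arise here.
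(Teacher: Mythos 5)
Your proposal follows exactly the route the paper takes: the paper offers no detailed derivation for this theorem beyond stating that the extended algorithm of Sect.~\ref{tran-law} is applied to the Gordon quotient \eqref{Gordon-def} (with $M=8$, $r_{8,1}=-1$, $r_{8,3}=1$) and its reciprocal for each residue class modulo $m=8$, which is precisely your plan, modelled on the worked Rogers--Ramanujan $2$-dissection. The only quibble is that $N=64$ is not forced by the modulus $q^{64}$ in the target (that modulus arises from $\delta\mid N$ after the substitution $q\mapsto q^{8}$, so a smaller admissible level such as $N=16$ already suffices and is computationally lighter), but your choice does satisfy conditions \nameref{con-2_N_1}--\nameref{con-2_N_10} and the argument goes through.
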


Ramanujan's cubic continued fraction is defined by
\begin{align*}
  \frac{1}{1}\+\frac{q+q^{2}}{1}\+\frac{q^{2}+q^{4}}{1}\+\dos,
\end{align*}
which equals
\begin{align}\label{R-C}
\frac{\left(q, q^{5} ; q^{6}\right)_{\infty}}{\left(q^{3}, q^{3} ; q^{6}\right)_{\infty}},
\end{align}
see \cite[p. 44]{Ramanujan-1988}.
Applying our algorithm to \eqref{R-C} and its reciprocal,
we are led to the
$2$-, $3$-, $4$- and $6$-dissection formulas in Theorem 1.1--Theorem 1.4 in \cite{Hirschhorn-Roselin-2010}.

\section*{Acknowledgements}

We are grateful to Peter Paule for his inspiring lectures and for stimulating discussions. We would also like to thank the referees for their valuable comments and suggestions.

\end{document}